\documentclass[11pt]{amsart}

\usepackage[latin1]{inputenc}
\usepackage{amsmath, amsthm, amssymb, comment}
\usepackage[dvips]{graphicx}
\usepackage{overpic}
\usepackage{enumitem}
\usepackage[colorlinks=true, citecolor=red]{hyperref}
\usepackage{pinlabel}

\usepackage{thm-restate}

\usepackage[margin=3.5cm]{geometry}

\usepackage{color}

\newtheorem{theorem}{Theorem}[section]
\newtheorem{corollary}[theorem]{Corollary}
\newtheorem{proposition}[theorem]{Proposition}
\newtheorem{definition}[theorem]{Definition}
\newtheorem{lemma}[theorem]{Lemma}

\newtheorem*{theorem*}{Theorem}
\newtheorem*{proposition*}{Proposition}
\newtheorem*{definition*}{Definition}
\newtheorem*{lemma*}{Lemma}
\newtheorem*{claim*}{Claim}
\newtheorem*{corollary*}{Corollary}
\newtheorem*{convention*}{Convention}

\theoremstyle{definition}

\newtheorem*{notation}{Notation}
\newtheorem{construction}[theorem]{Construction}

\theoremstyle{remark}
\newtheorem{rem}[theorem]{Remark}
\newtheorem*{rem*}{Remark}
\newtheorem*{acknowledgement}{Acknowledgement}

\newcommand{\wt}[1]{\widetilde{#1}}

\newcommand\bR{\mathbb R}

\newcommand\bZ{\mathbb Z}

\newcommand{\R}{\mathbb R}

\newcommand\eps{\varepsilon}

\newcommand\orb{ \mathcal O }

\newcommand{\cO}{\orb}

\newcommand{\cF}{\mathcal{F}}

\newcommand{\Homeo}{\mathrm{Homeo}}

\newcommand{\diam}{\mathrm{diam}}
\newcommand{\hol}{\mathrm{hol}}

\newcounter{notes}

\author[S. Fenley]{Sergio Fenley}
\address{Department of Mathematics, Florida State University, Tallahassee, FL 32306, USA}
\urladdr{http://www.math.fsu.edu/~fenley}
\email{sfenley@fsu.edu}

\author[K. Mann]{Kathryn Mann}
\address{CNRS, Institut de Math\'ematiques de Jussieu - Paris Rive Gauche}
\urladdr{https://webusers.imj-prg.fr/~kathryn.mann/}
\email{mann@imj-prg.fr}

\author[R. Potrie]{Rafael Potrie}
\address{CMAT, Facultad de Ciencias, Universidad de la Rep\'ublica, Uruguay and IRL2030 'Laboratorio del Plata' CNRS}
\urladdr{www.cmat.edu.uy/$\sim$rpotrie}
\email{rpotrie@cmat.edu.uy}

\title[Exotic codimension one Anosov flows]{Exotic codimension one Anosov flows}

\begin{document}

\begin{abstract}
We construct Anosov flows in certain circle bundles over closed hyperbolic 3-manifolds, producing counterexamples to a conjecture of Verjovsky. Some of these 4-manifolds admit infinitely many distinct Anosov flows up to orbit equivalence. The construction is made by using Cannon-Thurston maps associated to pseudo-Anosov quasigeodesic flows in hyperbolic $3$-manifolds. 
\bigskip


%
\end{abstract}

\maketitle

\section{Introduction} 

A $C^1$ vector field $X$ generating a flow $\phi^t$ on a closed Riemannian manifold $N$ is \emph{Anosov}
if the tangent bundle has a $D\phi^t$-invariant splitting $TN= E^s \oplus \mathbb{R} X \oplus E^u$ with the property that there are constants $c,\lambda>0$ such that
$$ \|D\phi^t v \| \leq c e^{-\lambda t} \|v\| \quad \forall t\geq 0, v \in E^s, \text{ and }$$
$$ \|D\phi^t v \| \leq c e^{\lambda t} \|v\| \quad \forall t\leq 0, v \in E^u.$$

The classification of Anosov systems up to orbit equivalence\footnote{Two flows on $M$ are  orbit equivalent if they differ only by time-change and conjugacy. }
 has been a central problem in dynamical systems at least since Smale's 1966 ICM address  \cite{Smale}. In 1970 Verjovsky conjectured\footnote{In \cite{Verjovsky} this was posed as a problem, but since then circulated as 
 a conjecture.} that if $\mathrm{dim}(M)\geq 4$ and $\phi^t$ is a codimension one Anosov flow (that is, so that either $E^s$ or $E^u$ is one-dimensional) then $\phi^t$ must be orbit equivalent to a suspension of a linear toral automorphism.  See also \cite{Ghys,Verjovsky2}. 

Ghys \cite{Ghys} proved Verjovsky's conjecture under the additional assumption that the flow is volume preserving and the codimension one subbundle is of class $C^2$. This result was improved by Simic  \cite{Simic} who replaced the $C^2$ hypothesis with $C^{1+\alpha}$ for every $\alpha \in (0,1)$. We refer the reader to \cite{Asaoka, Barbot, PlanteThurston, Verjovsky1} and the brief survey in \cite[\S 8.4]{FisherHasselblatt} for more information on the construction and classification of codimension-1 Anosov flows. Since the late 1960's, it has also been known that all codimension 1 Anosov {\em diffeomorphisms} are conjugate to  linear toral automorphisms, due to work of Franks and Newhouse \cite{Franks, Newhouse}.   Remarkably, the situation is quite different for Anosov flows in dimension 3, where non-suspension examples abound.  See \cite{FisherHasselblatt} or \cite{BM} for a general introduction to this topic.

In this article, we construct infinitely many counterexamples to  Verjovsky's conjecture: 

\begin{theorem} \label{thm_counterexample}
There exist codimension one Anosov flows on compact 4-manifolds which are not orbit equivalent to suspension flows.  
These occur on infinitely many distinct manifolds (even up to taking finite covers). Additionally, there exist some compact 4-manifolds which support infinitely many codimension one Anosov flows, all distinct up to orbit equivalence.   
\end{theorem}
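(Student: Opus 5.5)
The approach follows the abstract. Start with a closed hyperbolic 3-manifold $M$ carrying a pseudo-Anosov flow $\psi$ that is quasigeodesic; for instance, the suspension flow of a pseudo-Anosov monodromy on a fibered hyperbolic manifold, or Anosov flows from Dehn surgery. Quasigeodesicity gives well-defined forward and backward endpoint maps $e^\pm:\widetilde M\to S^2_\infty=\partial\mathbb H^3$. The orbit space $\cO\cong\R^2$ then compactifies to a disk whose ideal boundary circle $\partial\cO$ admits a $\pi_1(M)$-equivariant continuous surjection (Cannon--Thurston map) $\partial\cO\to S^2_\infty$.

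From this we build a candidate 4-manifold. Consider the unit tangent bundle $T^1\mathbb H^3$, or better the space of oriented geodesics with a time parameter, $(S^2_\infty\times S^2_\infty\setminus\Delta)\times\R$. The geodesic flow on $T^1M$ is Anosov but has codimension two. Instead, let $N=\widetilde N/\pi_1(M)$, where $\widetilde N$ consists of pairs $(x,\xi)$ with $x\in\mathbb H^3$ and $\xi\in S^2_\infty$ restricted so that $\xi$ lies in a codimension-one configuration. More concretely, I would aim for $N$ to be a circle bundle over $M$: take the circle at each point as $\partial\cO$ (or a circle of boundary directions). Define the flow by moving along the geodesic toward a point determined via the Cannon--Thurston map, so that one direction contracts inside $\mathbb H^3$ (2-dimensional stable, coming from horospheres) and the circle direction expands (1-dimensional unstable, from the expanding action on $\partial\cO$).

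The key steps, in order:
\begin{enumerate}
\item Establish the Cannon--Thurston property for quasigeodesic pseudo-Anosov flows.
\item Build a $\pi_1(M)$-invariant circle bundle over $\mathbb H^3$ whose fiber over $x$ is identified with $\partial\cO$ or $S^1$, so that a unit vector at $x$ pointing to the Cannon--Thurston image of a boundary point gives a continuous, though not injective, map into $T^1\mathbb H^3$.
\item Define the flow as the lift of the geodesic flow along those vectors.
\item Verify topological hyperbolicity, namely expansivity plus local product structure. Here the weak stable leaves come from strong stable horospheres times the flow line, and the unstable direction is the circle fiber, which is expanded because of the north-south-like dynamics of $\pi_1(M)$ on $\partial\cO$.
\item Smooth the construction to a genuine Anosov flow with codimension-one unstable bundle, using a smooth model and cone fields.
\end{enumerate}

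Non-suspension follows because $\pi_1(N)$ surjects onto the hyperbolic group $\pi_1(M)$. A suspension of a toral automorphism has solvable fundamental group, and orbit equivalence preserves $\pi_1$, so $N$ is not a suspension. Taking different hyperbolic $M$ yields infinitely many manifolds, distinct even after finite covers since their fundamental groups are non-commensurable. For infinitely many flows on one manifold, take $M$ fibered in infinitely many ways (a fibered face with $b_1\ge2$), giving infinitely many quasigeodesic pseudo-Anosov flows. If their Euler classes are arranged to agree, possibly after passing to covers, the resulting circle bundles coincide. Distinguishing the resulting flows up to orbit equivalence would use periodic orbit data, for example the free homotopy classes of periodic orbits, which recover the original flows.

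The main obstacle is step (5) together with step (4): obtaining genuine $C^1$ uniform hyperbolicity rather than topological Anosov behaviour, when the Cannon--Thurston map is far from smooth. I would first try to choose the fiber coordinates so that the flow is literally a smooth flow on a homogeneous-like bundle (for example, a frame-flow-type quotient) and transport only the identification of bundles via the Cannon--Thurston map.
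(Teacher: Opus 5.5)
Your steps (1)--(3) and the non-suspension argument agree with the paper. There, $N_\rho=(\wt M\times S^1)/\pi_1(M)$ with $\rho$ the boundary action on $\partial\cO$; the flow on each leaf $\wt M\times\{p\}$ is unit-speed geodesic flow toward $f(p)$; the horizontal foliation is weak stable and the fiber direction is unstable. However, there are genuine gaps at the two hardest assertions of the theorem.

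\textbf{Smoothness.} Your fallback of making the flow literally smooth by choosing fiber coordinates cleverly cannot work. The vector field at $(x,p)$ is the unit vector at $x$ pointing to $f(p)$, and $f$ is onto $S^2_\infty$. So $f$ is not Lipschitz, let alone $C^1$, in any smooth structure on $S^1$, since a Lipschitz curve has null image.

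One must instead do two things. First, (a) find a smooth structure on the circle in which $\rho$ acts by $C^1$ diffeomorphisms and the holonomy of $\cF^h$ along flow lines is uniformly expanding. Then (b) perturb $X$ inside the leaves of $\cF^h$.

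The paper's key idea for (a) is Tholozan's trick: set $\nu_o(I)=\mu_o(f(I))$, the pullback of the visual measure, and use $s\mapsto\nu_o([a,s])$ as charts. Then $\frac{d\rho(\gamma)_\ast\nu_o}{d\nu_o}(p)=e^{-\delta b_{f(p)}(\gamma o,o)}$. Busemann estimates then show that the holonomy along a flow segment of length $t$ expands by at least a constant times $e^{\delta t}$.

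This requires that the points of $\partial\wt M$ with several $f$-preimages form a Lebesgue-null set; otherwise $\nu_o$ is not even countably additive. Proving this for Cannon--Thurston maps of quasigeodesic pseudo-Anosov flows is a separate step. The endpoint pairs $(e^-,e^+)$ of orbits give a proper closed geodesic-flow-invariant set $\Lambda\subsetneq T^1M$. Such a set meets every strong stable leaf in a null set, by a density-point argument and minimality of the strong stable foliation.

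Step (b) is then mollification in the transverse direction plus a cone-field argument. None of this is in your proposal, and without it you have at most a topological Anosov flow, not the $C^1$ Anosov flows the theorem asserts. Once (a) is in place, the fiber expansion of your step (4) also follows. Without it, your appeal to north--south dynamics would need the paper's argument via periodic orbits at fixed points of $\rho(\gamma^k)$ and minimality of $\rho$.

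\textbf{Infinitely many flows on one manifold.} Fibering $M$ in infinitely many ways does not help. By Fried, all fibrations in the cone over a fibered face are cross sections of a single pseudo-Anosov flow, and the Thurston norm ball has finitely many faces. So you get only finitely many flows, hence finitely many actions $\rho$. Moreover, ``passing to covers'' to match Euler classes changes $M$, so it cannot give a single $4$-manifold.

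The missing idea is to fix one action with vanishing Euler class and vary it by fiberwise covers. Take a transversely orientable $\bR$-covered (skew) Anosov flow on a hyperbolic $M$. Its action on $S^1=\bR/\bZ$ comes from the action on the stable leaf space $\bR$. It also agrees with the boundary action of a transverse quasigeodesic pseudo-Anosov flow, so it has a Cannon--Thurston map.

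Hence the action lifts to $\bR/k\bZ$, and every $k$-fold fiberwise cover $N_{\rho_k}$ is diffeomorphic to $M\times S^1$. The lifted Anosov flows are pairwise inequivalent, because the minimal number of periodic orbits in a free homotopy class is $mk$, where $m$ is the value for $k=1$.

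For the infinitely-many-manifolds claim, you should also say where infinitely many incommensurable $M$ carrying quasigeodesic pseudo-Anosov flows come from. The paper uses infinitely many commensurability classes together with Agol's virtual fibering.
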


All previously known examples of non-suspension Anosov flows had $\dim E^s = \dim E^u$, and this was conjectured to always be true (see \cite[Question 1.10]{BBGH} and \cite[Conjecture 8.4.18]{FisherHasselblatt}).  Theorem \ref{thm_counterexample} disproves this conjecture as well.

To describe the examples in more detail, we recall the standard construction of a foliated bundle. 
For a manifold $M$ with universal cover $\wt M$, and action $\rho$ of $\pi_1(M)$ on the circle $S^1$ by homeomorphisms, we define
$N_\rho : = (\wt M \times S^1) /\pi_1(M)$ 
where the action of $\pi_1(M)$ is 
diagonal, with the action on the first factor by deck transformations on $\wt M$ and on $S^1$ by $\rho$.  The manifold $N_\rho$ is topologically an $S^1$-bundle over $M$, with a codimension one foliation $\cF^h$ transverse to the $S^1$ fibers, induced from the horizontal foliation of $\wt M \times S^1$.  

The main ingredient in Theorem \ref{thm_counterexample} is the following: 

\begin{theorem} \label{thm_main} 
Let $M$ be a closed hyperbolic $n$-manifold. Suppose $\rho: \pi_1(M) \to \Homeo(S^1)$ is a minimal action 
and $f: S^1 \to \partial \wt M$ a continuous map satisfying $f \circ \rho(\gamma) = \gamma \circ f$ for all $\gamma \in \pi_1(M)$. 
Then  $N_\rho$ admits a codimension one topological Anosov flow which is not a suspension, with weak stable foliation $\cF^s$ equal to the horizontal foliation.  

If $f$ has the property that the image of non-injective points have zero (Lebesgue)\footnote{Since $\wt M= \mathbb{H}^n$, one can equivalently take any visual measure on $\partial \mathbb{H}^n\cong S^{n-1}$, which are all in the Lebesgue measure class.} measure in $\partial \wt M$, then $N_\rho$ admits a smooth Anosov vector field, whose flow is orbit equivalent to the topological one above.    
\end{theorem}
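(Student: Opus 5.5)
The plan is to model the flow on the geodesic flow of $M$, using $f$ to attach a "target point at infinity" to each point of the circle. On $\wt M \times S^1 = \mathbb{H}^n \times S^1$ I define
$$\wt\phi^t(x,s) = \bigl(r_{x,f(s)}(t),\, s\bigr),$$
where $r_{x,\xi}$ is the unit-speed geodesic ray from $x$ to $\xi \in \partial \mathbb{H}^n$. Since isometries map rays to rays and $f\circ\rho(\gamma)=\gamma\circ f$, this flow commutes with the diagonal $\pi_1(M)$-action. It therefore descends to a continuous flow $\phi^t$ on $N_\rho$ that preserves each leaf of the horizontal foliation $\cF^h$.

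Equivalently, the map $\Theta:(x,s)\mapsto$ (the unit vector at $x$ pointing to $f(s)$) is an equivariant fibrewise map $N_\rho\to T^1M$. It intertwines $\phi^t$ with the geodesic flow, so orbits of $\phi^t$ project to geodesics.

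Next I identify the candidate invariant foliations.

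\textbf{Weak stable.} Two points $(x,s),(y,s)$ on the same horizontal leaf flow along rays with the common endpoint $f(s)$. After a time shift they converge exponentially, because asymptotic geodesics in $\mathbb{H}^n$ converge exponentially. So $\cF^h$ is the weak stable foliation, of dimension $n$.

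\textbf{Weak unstable.} Let $\eta(x,s)$ be the backward endpoint of the geodesic through $x$ with forward endpoint $f(s)$. The weak unstable leaf of $(x,s)$ is the $2$-dimensional set of pairs $(y,s')$ with $y$ on the geodesic from $\eta(x,s)$ to $f(s')$. These sets are parametrised by $s'$ and the flow time, they are transverse to $\cF^h$, and they are equivariant.

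Checking that these two foliations give local product structure is routine. It follows from the corresponding statement for the geodesic flow together with continuity of $f$.

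The main obstacle is expansion in the circle direction. The flow keeps the $S^1$-coordinate fixed, so expansion must be measured with respect to an equivariant family of metrics $d_x$ on the fibres. The natural choice is the pullback by $f$ of the visual metric on $\partial\mathbb{H}^n$ seen from $x$. Along $r_{x,f(s)}$ this grows exponentially for pairs $s,s'$ with $f(s)\neq f(s')$, exactly as unstable vectors of the geodesic flow do.

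This pullback degenerates on the non-injective set of $f$. To repair it I would add to it an equivariant family of fibre metrics built from minimality of $\rho$. Concretely, I would take a $\rho$-quasi-invariant probability measure class on $S^1$; minimality gives full support and no atoms. I would then push it along the flow, weighted by the Busemann cocycle at $f(s)$, so that it too expands at a uniform exponential rate. Expansivity plus the product structure then give a topological Anosov flow.

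It is not a suspension because the leaves of the weak stable foliation $\cF^h$ are quotients of $\mathbb{H}^n$, and the fundamental group $\pi_1(N_\rho)$ contains the non-solvable group $\pi_1(M)$. For a suspension of a toral automorphism, $\pi_1$ is solvable.

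For the smooth statement, assume the non-injective image has measure zero. Then $f$ is almost everywhere injective, so Lebesgue measure on $\partial\mathbb{H}^n$ pulls back to a $\rho$-quasi-invariant measure on $S^1$ with full support and no atoms. Using it as a smooth coordinate makes $\rho$ an action by $C^{1}$-type maps with derivatives given by the Poisson kernel. The flow becomes fibrewise smooth with derivative $e^{t}$-type expansion in the $S^1$ direction.

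I would then smooth the horizontal structure by a leafwise averaging, keeping the flow direction transverse. Finally I would verify hyperbolicity of the smoothed vector field via invariant cone fields, which are stable under $C^0$-small perturbation of the splitting. I expect the regularity of this change of coordinates, and checking that the cone criterion survives it, to be the delicate point. Structural stability then identifies the smooth flow with the topological one up to orbit equivalence.
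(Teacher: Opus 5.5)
\textbf{The topological half has a genuine gap: your repair of the fibre pseudometric does not produce an equivariant expanding family.} Your flow, the foliations $\cF^h$ and $\cF^v$, and the smooth-structure step via the pulled-back visual measure (Tholozan's trick) match the paper. The non-suspension argument is fine once phrased as $\pi_1(N_\rho)$ \emph{surjecting onto} $\pi_1(M)$, which it need not contain.

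The first assertion must hold for an \emph{arbitrary} equivariant $f$. Under the null-set hypothesis your fibre-metric argument is essentially Lemma~\ref{lem_transverse_expansion} and would suffice. Without that hypothesis, the repair fails for the following reason.

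\begin{itemize}
\item If $m$ is a measure on $S^1$ and $m_x(ds)=e^{-\delta b_{f(s)}(x,o)}\,m(ds)$, then $dm_y/dm_x(s)=e^{-\delta b_{f(s)}(y,x)}$ does expand like $e^{\delta t}$ along orbits.
\item But the cocycle identity shows that equivariance $\rho(\gamma)_\ast m_x=m_{\gamma x}$ holds if and only if $\frac{d\rho(\gamma)_\ast m}{dm}(s)=e^{-\delta b_{f(s)}(\gamma o,o)}$ for all $\gamma$. That is, $m$ must be a conformal density for the Busemann cocycle pulled back by $f$.
\item A generic quasi-invariant measure has no such cocycle. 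Also, minimality gives full support, not absence of atoms.
\item If instead you force equivariance by declaring $m_{\gamma o}=\rho(\gamma)_\ast m_o$, nothing makes the family expand.
\item The one measure you have with the right cocycle is $f^\ast\mu_o$, which is countably additive only under the null-set hypothesis.
\item Lifting $\mu_o$ equivariantly through the fibres of $f$ in general would need a priori control of $f^{-1}(\xi)$. The paper obtains this (Corollary~\ref{cor-CTmaps}) only \emph{as a consequence} of the topological Anosov property, so using it here would be circular.
\end{itemize}

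So you have no source of separation in the fibre direction. This matters most for pairs $s\neq s'$ with $f(s)=f(s')$: their orbits on a common $\cF^v$-leaf project to the same geodesic of $\wt M$, so they can only separate in the $S^1$ coordinate.

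\textbf{How the paper proves the topological part.} It uses no metric at all; instead it uses minimality of $\rho$ dynamically, following Calegari.
\begin{itemize}
\item Surjectivity of $f$ plus north--south dynamics give fixed points of some $\rho(\gamma^k)$, hence periodic orbits $\alpha$.
\item Suppose the first return to a transversal at $\alpha$ did not contract the fibre direction in the past. Then the region swept by the return orbit segments would embed in a tubular neighbourhood of $\alpha$. Yet minimality yields $\gamma$ with $\gamma\wt\alpha$ entering its lift, a contradiction.
\item Minimality then places a translate of $\cF^h(\wt\alpha)$ between any two points of an $\cF^v$-leaf, which forces separation.
\item Compactness gives uniform separation times and backward asymptoticity.
\end{itemize}

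\textbf{Smaller points in your smooth part.}
\begin{itemize}
\item The field must be mollified \emph{transversally} to $\cF^h$ (it is already smooth along leaves), while staying tangent to $\cF^h$.
\item It is then only $C^0$-close transversally. Cone-field stability, which is a $C^1$ statement, gives only leafwise hyperbolicity.
\item The fibre expansion of the new flow must be re-derived. The paper does this using that the fibre derivative is the holonomy of $\cF^h$, controlled up to bounded error by the endpoints of the uniformly quasigeodesic orbit segment.
\item Structural stability does not apply to the merely topological $\varphi^t$. The orbit equivalence comes from Asaoka's lemma, or from straightening orbits leafwise to the geodesics with the same endpoints.
\end{itemize}
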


A {\em topological Anosov flow} is a flow which admits two invariant, topologically transverse $C^0$-foliations on which nearby orbits have asymptotic behavior paralleling the forwards-convergence / backwards-separation (and vice versa) on weak-stable (resp. weak-unstable) leaves -- see Definition \ref{def_TAF}. 

The outline of the proof of Theorem \ref{thm_main} is as follows:  
Using that the horizontal foliation on $N_\rho$ has hyperbolic leaves, and that $f$ is $\pi_1(M)$-equivariant, we construct a  flow in $N_\rho$ whose orbits along each horizontal leaf are by hyperbolic geodesics with a common ideal point, and which admits a transverse 2-dimensional foliation $\cF^v$.  

From here, the topological and smooth arguments can essentially be read independently.  In the topological case, we follow a strategy used by Calegari \cite{Calegari} in dimension 3 to show orbits separate along leaves of $\cF^v$, and conclude the flow is topologically Anosov with $\cF^v$ the weak-unstable foliation.  This flow is in fact smooth along weak-stable (horizontal) leaves, but only transversely $C^0$.  To improve regularity, we use the additional hypothesis on the map $f$ to apply a trick from Tholozan \cite{Tholozan} which allows us to implement a strategy similar to that of Cawley \cite{Caw} (see also \cite{Asaoka}) to first change the smooth structure on $N_\rho$, then perturb the flow, keeping it tangent to $\cF^h$, to obtain a smooth Anosov flow with respect to this structure. 

To prove Theorem \ref{thm_counterexample}, we need the existence of actual maps $f$ and actions $\rho$ as in the hypothesis of Theorem \ref{thm_main}. Generalizing a construction of Cannon and Thurston \cite{CT}, the first author showed that, for any quasigeodesic pseudo-Anosov flow on a closed hyperbolic 3-manifold, there is an action of $\pi_1(M)$ on a circle and a $\pi_1(M)$-equivariant homeomorphism  $f: S^1 \to \partial \wt M$ (see \cite{Fenley-extension, Fenley-ext2,FenleyQGAnosov}).  
We show here that all of these satisfy the condition that the non-injective points have zero measure, and hence conclude: 

\begin{corollary}\label{coro-4manifold}
If $M$ is a hyperbolic 3-manifold supporting a quasigeodesic pseudo-Anosov flow\footnote{Thanks to \cite{FrankelLandry} it is enough to support a quasigeodesic flow in order to support a quasigeodesic pseudo-Anosov flow.}, then there is a 4-manifold $N_\rho$ as in Theorem \ref{thm_main} admitting a smooth codimension one Anosov flow that is not orbit equivalent to a suspension.  
\end{corollary}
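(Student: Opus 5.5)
The plan is to feed Fenley's circle action and Cannon--Thurston type map into Theorem \ref{thm_main}. To do so we must check three hypotheses: the map exists and is equivariant, the action is minimal, and the image of the non-injective points is Lebesgue null. Once they hold, Theorem \ref{thm_main} gives a smooth codimension one Anosov flow on $N_\rho$ that is not a suspension.

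\textbf{Setup and minimality.} Let $\phi$ be a quasigeodesic pseudo-Anosov flow on $M$, and let $\cO$ be the orbit space of the lifted flow $\wt\phi$ to $\wt M = \mathbb{H}^3$; $\cO$ is a plane. By \cite{Fenley-extension, Fenley-ext2, FenleyQGAnosov}, $\cO$ has an ideal circle compactification $\cO\cup\partial\cO$ on which $\pi_1(M)$ acts, and there is a continuous, surjective, equivariant map $f:\partial\cO\to\partial\wt M$. (It is a quotient map rather than a homeomorphism.) Set $\rho$ to be the action on $S^1=\partial\cO$.
\begin{itemize}
\item For minimality, I would use that ideal points of stable and unstable leaves of lifts of periodic orbits are fixed points of deck transformations, and that these are dense in $\partial\cO$, because periodic orbits are dense in $\cO$ for a transitive pseudo-Anosov flow.
\item Every orbit closure is closed and invariant, so it contains attracting fixed points of these elements, by north--south type dynamics of hyperbolic elements on $\partial\cO$.
\item Density of such fixed points, each of whose $\rho(\pi_1)$-orbit is dense, then gives minimality. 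If needed I would quote Fenley's result that this action is minimal directly.
\end{itemize}

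\textbf{Locating the non-injective set.} From the construction of $f$, two points of $\partial\cO$ are identified iff they are ideal points of a common (possibly singular) stable or unstable leaf, or are chained through such leaves. So the set $E\subset\partial\wt M$ of images of non-injective points is exactly the set of common forward endpoints of weak stable leaves, together with the common backward endpoints of weak unstable leaves. This uses that all orbits in a weak stable leaf are forward asymptotic, hence, being uniform quasigeodesics, share a forward ideal point.

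\textbf{Measure zero.} The set $E$ is $\pi_1(M)$-invariant and Borel, since it is a countable union of images of compact pieces of leaf space. The action of a cocompact Kleinian group on $S^2$ is ergodic for Lebesgue measure (Sullivan), so $E$ has measure $0$ or full measure. It remains to rule out full measure, which I expect to be the main obstacle.

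My first attempt is a tracking argument with the geodesic flow.
\begin{itemize}
\item For Lebesgue-a.e.\ endpoint $\xi$, a geodesic ray toward $\xi$ is generic for Liouville measure on $T^1M$.
\item If $\xi\in E$, say $\xi$ is a forward endpoint of a stable leaf, the ray is shadowed at bounded distance by a forward flow ray, which advances linearly along it by the quasigeodesic property.
\item I would build a continuous cocycle $c(v,T)$ on $T^1M$ measuring the signed progress of the flow near the geodesic segment. An example is $\int_0^T\langle \dot\gamma_v, X\rangle$, suitably smoothed by averaging $X$ over balls of the tracking radius.
\item Shadowing forces $\liminf c(v,T)/T>0$ on such generic $v$. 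On the other hand, Birkhoff averages equal the Liouville integral, which vanishes by the flip symmetry $v\mapsto -v$.
\item The symmetric case of backward endpoints of unstable leaves is handled the same way with time reversed.
\end{itemize}
The positive measure of such generic $\xi$ in $E$ would give a contradiction, so $E$ cannot have full measure. Hence $E$ is null, and Theorem \ref{thm_main} applies.

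The delicate point is making the smoothed correlation rigorous, since orbit direction and geodesic direction agree only coarsely. If this fails, I would instead show directly that $E$ misses a set of positive measure, namely endpoints of generic geodesics.
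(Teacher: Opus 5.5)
\textbf{The gap is in the measure-zero step.} Your reduction matches the paper's. You apply Theorem \ref{thm_main} to the boundary action and the map $f$ of Theorem \ref{thm_map_f}, using minimality and the inclusion of the non-injective image $E$ in $e^+(\cO)\cup e^-(\cO)$. For minimality, do not rely on transitivity, which is not a hypothesis; cite \cite[Theorem 7.3]{BBM} as the paper does.

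The gap is in the one substantive step, showing that $E$ is not conull. You claim that shadowing forces $\liminf c(v,T)/T>0$ for $c(v,T)=\int_0^T F(g_s v)\,ds$ with $F(v)=\langle v,\bar X(\pi v)\rangle$. This is not justified. Tracking only says that \emph{one} orbit stays within distance $R$ of the geodesic. The value of $X$ at $\gamma_v(t)$, or its average over an $R$-ball, is determined by the other orbits through that ball; the tracking orbit has zero volume there. Those other orbits have endpoints unrelated to those of $\gamma_v$.

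The strategy is also obstructed in general. The identity $F(-v)=-F(v)$ is exactly what makes the Liouville integral vanish. Your positivity mechanism uses only shadowing, not genericity, so it would apply equally to closed geodesics. Suppose $\Phi$ has periodic orbits $\alpha,\beta$ with $\alpha$ freely homotopic to $\beta^{-1}$, as for the two corners of a lozenge. Then a closed geodesic $g$ and its reverse $-g$ are both forward-tracked by flow rays, and you would need both $\int_g F>0$ and $\int_{-g}F=-\int_g F>0$. A cocycle of this type exists only in special cases, e.g.\ for suspensions, where you can integrate a closed $1$-form that is positive on the flow. More fundamentally, your argument never uses any property of $\Phi$ that prevents every geodesic from being tracked, and that is exactly what must be proved.

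\textbf{The missing ingredient.} Consider the closed, geodesic-flow-invariant set $\Lambda\subset T^1M$ obtained from the endpoint pairs $(e^-(o),e^+(o))$ of lifted orbits $o$. The key fact is that $\Lambda\neq T^1M$, because a pseudo-Anosov flow does not realize every free homotopy class of oriented closed geodesics. With this, your fallback goes through, and you do not need Sullivan's ergodicity:
\begin{enumerate}
\item Forward Birkhoff genericity for Liouville measure is constant on weak stable leaves.
\item Hence for Lebesgue-a.e.\ $\xi$, every vector pointing to $\xi$ is generic and so has a dense forward orbit.
\item But if $\xi=e^+(o)$, the geodesic from $e^-(o)$ to $\xi$ lies in $\wt\Lambda$, so its forward orbit stays in the proper closed set $\Lambda$.
\end{enumerate}
Thus $e^+(\cO)$ is null, and $e^-(\cO)$ is null by reversing time.

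The paper uses the same $\Lambda$ but argues differently, through Lemma \ref{lem_positive_implies_everything}. A density point of $\Lambda$ on a strong stable leaf, flowed backward, produces a whole strong stable disk inside $\Lambda$. Minimality of the strong stable foliation then forces $\Lambda=T^1M$, a contradiction. The paper applies this to the images of the sides of the countably many product rectangles covering $\cO$.
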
 

The reason we take $\dim(M) = 3$ (and hence produce exotic Anosov flows on 4-manifolds only) is that all known examples of maps $f$ and actions $\rho$ satisfying the conditions of Theorem \ref{thm_main} are for 2-dimensional and 3-dimensional manifolds, and the construction in the 2-dimensional case simply recovers lifts of geodesic flow on surfaces to fiberwise covers of the unit tangent bundle.

By using Anosov, rather than pseudo-Anosov examples, which do occur on many hyperbolic 3-manifolds \cite{Fenley-Anosov,BoM,BI-flows,BY}, we can produce infinite families supported on the same 4-manifold:  

\begin{corollary}[Failure of finiteness] \label{cor_infiniteness}
Let $M$ be a hyperbolic 3-manifold admitting a transversally orientable
Anosov flow.  Then, 
there exist infinitely many smooth Anosov flows on $M \times S^1$, pairwise distinct up to orbit equivalence. 
\end{corollary}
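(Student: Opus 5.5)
Fix a transversally orientable Anosov flow $\phi$ on the hyperbolic 3-manifold $M$. By Fenley's work it is quasigeodesic, or at least its lifts to finite covers behave well enough to give the circle action and Cannon--Thurston map. The plan is to produce infinitely many actions $\rho_k$ of $\pi_1(M)$ on $S^1$ that are pairwise non-conjugate, each semiconjugate to the universal circle action $\rho$ coming from $\phi$. Each $\rho_k$ should also yield a bundle $N_{\rho_k}$ homeomorphic to $M\times S^1$. Corollary~\ref{coro-4manifold} (via Theorem~\ref{thm_main}) then gives a smooth Anosov flow on each $N_{\rho_k}$, and it remains to show these are pairwise not orbit equivalent.

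\textbf{The actions.} Transverse orientability gives an action $\rho$ on the circle $S^1_u$ at infinity of the orbit space. The Euler class $e(\rho)\in H^2(M;\bZ)$ should equal the Euler class of the normal bundle to the flow, i.e.\ of $E^s\oplus E^u$ or of one of the line bundles. When $E^s$ is orientable this class is torsion or zero, but it need not vanish in general. To get the product $M\times S^1$, use fiberwise $k$-fold covers. Because $\rho$ lifts to $\widetilde{\Homeo}(S^1)$ whenever $e(\rho)$ is divisible appropriately, define $\rho_k$ as the lift of $\rho$ to the $k$-fold cyclic cover of $S^1$; this has Euler class $e(\rho)/k$ when that makes sense. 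The cleaner route is to check that $e(\rho)=0$ using that $E^s$ and $E^u$ are orientable line bundles, so the normal plane bundle is trivial. In that case every $\rho_k$ is defined, $N_{\rho_k}\cong M\times S^1$, and $f\circ\pi_k$ is an equivariant map to $\partial\wt M$ with the same null set of non-injective image points. Minimality of $\rho_k$ follows from minimality of $\rho$ together with the fact that deck translations are limits of group elements; the latter is the nontrivial step, using the fact that $\phi$ is not a suspension so that the action is not virtually one of finite rotations.

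\textbf{Distinguishing the flows.} An orbit equivalence between flows on $M\times S^1$ lifts to a homeomorphism of universal covers taking weak stable foliations to weak stable foliations. The weak stable foliation here is the horizontal foliation, whose leaf space in the universal cover is the line $\widetilde{S^1}$. The holonomy action of $\pi_1(M)\times\bZ$ on this line therefore determines the flow up to conjugacy. For $\rho_k$ the central $\bZ$ acts as translation by $1/k$ of a lift of $\rho$. So I would extract an invariant such as the number of fixed-point orbits, or the translation structure: the centralizer element acts without fixed points, and the number of $\pi_1(M)$-orbits of lifts of a periodic point per fundamental domain of the central generator equals $k$ times a constant. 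A homeomorphism of $M\times S^1$ could mix the factors through $H^1(M)$, sending the central generator to its product with some $\gamma$. To rule this out, I would use that $\pi_1(M)$ has trivial center and that the centre of $\pi_1(M)\times\bZ$ is exactly $\bZ$, which makes the invariant well defined.

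\textbf{Main obstacle.} I expect the hardest part to be ensuring that the Euler class vanishes, so that all the $N_{\rho_k}$ really are $M\times S^1$; this may require passing to a double cover or using a quasigeodesic Anosov flow built by the cited constructions. The second hardest is the minimality of the lifted actions.
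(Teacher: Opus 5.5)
Your proposal has a genuine gap: it never handles the $\mathbb{R}$-covered case, which the corollary must include and which the paper treats first. There your opening claim fails. An $\mathbb{R}$-covered Anosov flow on a hyperbolic 3-manifold is skew and is \emph{not} quasigeodesic, so the endpoint maps $e^\pm$, and with them a Cannon--Thurston map for its own orbit space, do not exist. Moreover, the $\pi_1(M)$-action on the ideal circle of its orbit space (a diagonal strip) fixes the two ideal points at the ends of the strip, so it is not minimal. Passing to finite covers changes neither fact. The missing idea is to use instead the action of $\pi_1(M)$ on the weak-stable leaf space $\mathbb{R}$. By skewness and transverse orientability this action commutes with the one-step-up translation, so it descends to a minimal action $\rho$ on $\mathbb{R}/\mathbb{Z}$. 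By \cite{Thurston, Fenley-transversepAflow, Calegari-Rcov} (see \cite[Proposition 6.6]{Pot-Anosov}) this action coincides with the boundary action of a transverse \emph{quasigeodesic} pseudo-Anosov flow, and that flow is what supplies $f$ with null non-injectivity image via Proposition \ref{prop_null}. As a bonus, $\rho$ visibly lifts to $\mathbb{R}$, so the Euler-class question you call the main obstacle disappears in this case. In the non-$\mathbb{R}$-covered case your set-up agrees with the paper's, which gets $e(\rho)=0$ from the universal-circle property \cite{BT} together with \cite[Proposition 7.1]{BoyerHu}. Your route through triviality of $E^s\oplus E^u$ is a reasonable alternative there, but it needs a proof that the flat bundle $(\wt M\times S^1_\Phi)/\pi_1(M)$ is isomorphic to the unit normal bundle of the flow; you only assert this.

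Two further steps fail as written. First, $f\circ\pi_k$ is at least $k$-to-one at every point, so the image of its non-injective set is all of $\partial\wt M$. Hence the smooth half of Theorem \ref{thm_main} does not apply to $(\rho_k, f\circ\pi_k)$; the candidate measure $I\mapsto\mu_o(f\pi_k(I))$ is not even additive on the $k$-fold circle. The fix is to lift the smooth Anosov flow on $N_\rho$ to the $k$-fold fiberwise cover $N_{\rho_k}\to N_\rho$, which is why the paper's flows $\psi_k$ share common fiberwise covers. This makes minimality of $\rho_k$ irrelevant. It holds anyway, and no approximation of deck translations by group elements is needed: the deck translates of a minimal set of $\rho_k$ are minimal sets whose union is the whole circle, so by connectedness there is only one. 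Second, your concern when distinguishing the flows is inverted. An automorphism of $\pi_1(M)\times\mathbb{Z}$ sends the central generator to itself or its inverse; it is the $\pi_1(M)$ factor that can be twisted by a homomorphism to $\mathbb{Z}$. Quotienting the weak-stable leaf space by the centre removes this twist, so an orbit equivalence conjugates $\rho_k$ to $\rho_l$ up to an automorphism of $\pi_1(M)$. You still need a precise invariant in place of your count of ``orbits of lifts per fundamental domain''. The paper uses the minimal number of periodic orbits in a free homotopy class, which is $k$ times larger for $\psi_k$ than for $\psi_1$.
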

To our knowledge, these are the first examples of compact manifolds supporting infinitely many orbit equivalence classes of Anosov flows.

A $\pi_1(M)$ equivariant map $f: S^1 \to \partial \wt{M}$, for some action $\rho$ of $\pi_1(M)$ on $S^1$, is what is typically called a {\em Cannon-Thurston map}.  (This term has also been applied more broadly to various classes of equivariant maps, when at least one of the spaces is the boundary of a group.)  It is an interesting question to classify examples and describe their general properties -- see e.g. \cite{CL}.  It was recently shown that a different class of generalized Cannon-Thurston maps are necessarily finite to one \cite{BHLM}.  Our construction of topological Anosov flows gives the following new result for maps from $S^1$: 

\begin{corollary}[Cannon-Thurston maps are uniformly finite-to-one]\label{cor-CTmaps}
Let $M$ be a closed, negatively curved 
$n$-manifold 
and $\rho$ a minimal action of $\pi_1(M)$ on $S^1$.  If there exists a $\pi_1$-equivariant map $f:  S^1 \to \partial \wt{M}$, then there exists $K$ such that for any $z \in \partial \wt{M}$, the pre-image $f^{-1}(z)$ contains at most $K$ points.
\end{corollary}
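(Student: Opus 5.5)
The plan is to deduce the bound from the topological Anosov flow of Theorem \ref{thm_main}, using the fact that a topological Anosov flow on a compact manifold has only finitely many periodic orbits of bounded period and, more to the point, uniform local product structure. The proof of Theorem \ref{thm_main} is written for hyperbolic $M$, but the construction of the flow should only use that leaves of $\cF^h$ are copies of $\wt M$ with a visual boundary and geodesics to ideal points. So I expect it to go through for closed negatively curved $M$, with $\wt M$ a CAT($-1$) manifold.

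In $N_\rho$, lift to $\wt M\times \R$, where $\R$ is the universal cover of $S^1$ (or work with $\wt M\times S^1$ directly). The flow on the leaf $\wt M\times\{\theta\}$ is by geodesics converging to the ideal point $f(\theta)$. Now fix $z\in\partial\wt M$ and suppose $\theta_1,\dots,\theta_k$ are distinct points of $f^{-1}(z)$. Pick a base point $x\in \wt M$ and a geodesic ray from $x$ to $z$. The points $(x_t,\theta_i)$, where $x_t$ moves along the ray, lie on flow lines in the distinct horizontal leaves $\wt M\times\{\theta_i\}$, and all these flow lines project to the same geodesic ray in $\wt M$. They therefore stay at bounded (in fact, constant) distance in $\wt M\times S^1$ for all forward time. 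The aim is to show that these $k$ orbits are pairwise forward asymptotic modulo the $S^1$-direction, and then use expansivity along the weak-unstable foliation $\cF^v$ to bound $k$.

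Concretely, the $k$ points $(x_t,\theta_i)$ all lie on a single fiber circle $\{x_t\}\times S^1$. Project to the compact quotient $N_\rho$. The fiber over the projection of $x_t$ contains $k$ points whose orbits remain in the same fiber for all $t\ge0$. Any two of these lie on distinct weak-stable leaves, because they sit in distinct horizontal leaves. Along the fiber direction, which is transverse to $\cF^h$ and tangent to $\cF^v$, the unstable separation says that two points on the same $\cF^v$ leaf but different stable leaves get pushed apart to a definite distance $\epsilon_0$ in forward time, where $\epsilon_0$ is a uniform expansivity constant. Here one needs that the fiber segment between consecutive $\theta_i$, $\theta_{i+1}$ lies in the leaf of $\cF^v$. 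That holds by construction: $\cF^v$ is formed by fibers union flow lines. Hence, for large $t$, the $k$ points are pairwise $\epsilon_0$-separated in the fiber circle, whose length is uniformly bounded by compactness, say by $L$. This gives $k\le L/\epsilon_0=:K$.

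The main obstacle is justifying the uniform separation along the fiber. The expansivity of the topological Anosov flow gives that orbits on the same unstable leaf eventually separate, but I need them to be separated \emph{within the fiber} at a common time, uniformly. I would handle this with a compactness argument. Suppose infinitely many (or more than $K$) preimages exist. Then there are pairs of points on a common fiber, in distinct horizontal leaves, whose orbits stay within fiber distance $<\epsilon_0$ for all forward time along a common geodesic ray. Taking limits over the deck group, using that $M$ is compact and the fibers are compact, I would get two distinct points whose full forward orbits stay $\epsilon_0$-close along the same $\cF^v$ leaf. This contradicts the local product structure/expansivity from Definition \ref{def_TAF}, which forces such points onto the same weak-stable leaf. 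If instead the limiting points coincide, I would argue by minimality of $\rho$ combined with the continuity of $f$, or by passing to the boundary of the interval between them.
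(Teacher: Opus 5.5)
Your setup is the same as the paper's. The preimages $\theta_1,\ldots,\theta_k$ of $z$ give orbits in the distinct leaves $\wt M\times\{\theta_i\}$ that all run along one geodesic of $\wt M$ (from some $\beta$ to $z$). So at each time they lie on a common fiber of length at most $L$, and the aim is to play $L$ against a separation constant.

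\textbf{The gap.} The step ``hence, for large $t$, the $k$ points are pairwise $\eps_0$-separated'' does not follow from what you cite. The separation property of Definition~\ref{def_TAF} only says that each pair is $\eps_0$-apart at \emph{some} forward time. Nothing you invoke stops that pair from coming back within $\eps_0$ later. Your compactness repair does not fix this. If $k>L/\eps_0$, pigeonhole gives at each time \emph{some} pair at fiber distance $<\eps_0$. Since there are finitely many pairs, some fixed pair is close at an unbounded set of times. It does not give a pair that stays close for all forward time, so expansivity yields no contradiction, and limits taken over the deck group have the same defect.

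\textbf{What the paper uses instead.} The missing ingredient is Remark~\ref{rem_separate_forever}: once two orbits on a common $\cF^v$-leaf $\eps$-separate in $\wt M\times S^1$, they never return within $\eps$. With it, the fixed pair that is close at arbitrarily large times gives an immediate contradiction. Alternatively, you could show directly that a fixed pair is close only for bounded times. Combine backward asymptoticity along $\cF^v$ with the uniform separation time of Lemma~\ref{lem_uniform_time}. Closeness at times $t_m\to\infty$ would then force $(x,\theta_i)$ to lie arbitrarily close, in the lift, to the orbit of $(x,\theta_j)$, which lies in a different horizontal leaf.

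\textbf{A false justification.} Your reason for why close points of the configuration lie on a common $\cF^v$-leaf is wrong: the $S^1$ fibers are not contained in $\cF^v$-leaves. The fiber arc from $(x_t,\theta_i)$ to $(x_t,\theta_{i+1})$ lies in a single $\cF^v$-leaf only if every geodesic through $x_t$ toward $f(q)$, for $q$ in the arc, has backward endpoint $\beta$. That means $f\equiv z$ on the arc, which is exactly what you want to rule out. In the example of Remark~\ref{rem_surface}, your claim would put the circles of $T^1M$ inside weak-unstable leaves. The correct reason is this: all your orbits share the backward endpoint $\beta$, and each leaf $\wt M\times\{\theta\}$ with $f(\theta)\neq\beta$ contains exactly one orbit with backward endpoint $\beta$. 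By the local product structure, when $(x_t,\theta_i)$ and $(x_t,\theta_j)$ are close, the second therefore lies on the local $\cF^v$-leaf of the first.

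Finally, the distance between your orbits is not constant in the metric lifted from $N_\rho$, since the fiber metric over $x_t$ changes with $t$. If it were constant, the orbits could never separate at all.
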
 

\begin{acknowledgement}
This work was done while the authors were in residence at the Simons Laufer Mathematical Sciences Institute in Spring 2026 for the program ``Topological and Geometric Structures in low dimensions"
supported by the NSF grant No. DMS-2424139. They are 
grateful for the opportunity, working conditions, and all the colleagues in the program for the interesting discussions from which this work benefited.
SF was also partially supported by Simons grant SFI-MPS-TSM-00013757. 
KM was also partially supported by NSF grant DMS-2505228.  RP was also partially supported by CSIC.
We also thank Masayuki Asaoka for comments and questions on an earlier version of this article.
\end{acknowledgement}

\section{Construction of (non-suspension) flows} \label{sec_construction}
While we stated Theorem \ref{thm_main} for hyperbolic manifolds, the construction only requires negative curvature so we start by working in this level of generality.  This is also relevant to obtain Corollary \ref{cor-CTmaps} at the end of this section. 

Fix a closed negatively curved $n$-dimensional manifold $M$. Then $\partial \wt M$ is 
homeomorphic to $S^{n-1}$, and oriented geodesics in $\wt M$ are in one to one correspondence
with pairs of distinct points in $S^{n-1}$.

Let $\rho: \pi_1(M) \to \Homeo(S^1)$ be a minimal action and $f: S^1 \to \partial \wt M$ a $\pi_1(M)$-equivariant continuous map.   Recall that $N_\rho$ denotes the quotient of $\wt M \times S^1$ by $(x, s) \sim (\gamma x, \rho(\gamma)(s))$ where the action on the first factor is by deck transformations.  

We first define some relevant foliations in $\wt M \times S^1$ and later explain that they
project to $N_\rho$.
Let $\cF^h$ denote the horizontal foliation of $\wt{M} \times S^1$ whose leaves are $\wt{M} \times \{p\}$. 
Define a 1-dimensional, oriented, foliation $\mathcal{G}$ on $\wt{M} \times S^1$ by foliating each leaf $\wt{M} \times \{p\}$ of $\cF^h$ by the oriented geodesics of $\wt{M}$ with forward endpoint equal to $f(p)$.  
Since $f$ is continuous, $\mathcal{G}$ is a topological foliation of $\wt{M} \times S^1$. 

\begin{rem} \label{rem_surface}
As an illustrative (but not representative) example, consider the case where $n=2$, the action $\rho$ is the standard action of $\pi_1(M)$ on the boundary circle of $\wt M = \mathbb{H}^2$, and $f: S^1 \to S^1$ is the identity map.  Then $\mathcal{G}$ is the foliation by oriented geodesics on $T^1 \mathbb{H}^2$, where $T^1\mathbb{H}^2 $ is trivialized as a product $\mathbb{H}^2 \times S^1$ with the weak-stable leaves horizontal.   Points in the complement of the diagonal (the graph of $f$ in the boundary $S^1 \times S^1$ of $T^1\mathbb{H}^2$, shown in orange on the figure), give the Hopf coordinates on the space of oriented geodesics.  See Figure \ref{fig_geodesic_flow}, left.
\end{rem}

\begin{figure}[h]
\centering 
\begin{overpic}[width=12cm]{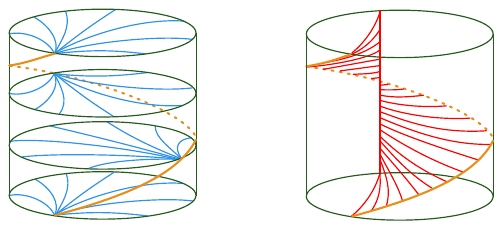}

\put(-8,7){{\small $\wt M \times \{0\}$}}

\put(-8,17){{\small $\wt M \times \{p\}$}}

\put(-8,38){{\small $\wt M \times \{1\}$}}

\put(10,2){{\tiny $f(0)$}}

\put(36,13){{\tiny $f(p)$}}

\put(74,45){{\small $\xi$}}

\end{overpic}
\caption{{\small The special case $f= id$, $\dim(M) = 2$. On the left, some leaves $\wt M \times \{p\}$ of $\cF^h$ with  geodesics pointing towards $f(p)$, the graph of $f : S^1 \to \partial \wt M$ is the orange line. On the right, a leaf of $\cF^v$ consisting of geodesics from $\xi$ to $f(p)$ at each $\wt{M} \times \{p\}$.}}
\label{fig_geodesic_flow}
\end{figure}

We now define a 2-dimensional foliation $\cF^v$ in $\wt M \times S^1$ transverse to $\cF^h$.
For each $\xi \in \partial \wt{M}$, let $L_\xi$ denote the union (over all $p \in S^1$) of leaves of $\mathcal{G}$ with negative endpoint $\xi$ in $\wt{M} \times \{p\}$. Note that there is exactly one such leaf of $\mathcal{G}$ in each $\wt{M} \times \{p\}$ when $f(p) \neq \xi$, and if $f(p)=\xi$ then there is no such leaf.  
It follows from this uniqueness, together with the continuity of $f$, 
that each set $L_\xi$ is locally a plane, containing one continuously varying geodesic in each slice $\wt{M} \times \{p\}$.  
Formally, in a neighborhood of a leaf of $\mathcal{G}$ one has charts trivializing the foliations $\cF^v$ and $\cF^h$ by varying the endpoints of each.
Thus, these $L_\xi$ define leaves of a 2-dimensional foliation $\cF^v$ transverse to $\cF^h$, whose leaves are saturated by leaves of the 1-dimensional foliation $\mathcal{G}$. In the special case described by Remark \ref{rem_surface} and shown in Figure \ref{fig_geodesic_flow} right, each $L_\xi$ is connected, however in general it may not be and instead forms a union of leaves of $\cF^h$.  

We claim that $\mathcal{G}$ descends to a foliation of $N_\rho$. This follows easily from equivariance: given a leaf $l$ of $\mathcal{G}$ in  $\wt{M} \times \{p\}$ with negative endpoint $\xi$, and $\gamma \in \pi_1(M)$, we have that $\gamma(\wt{M} \times \{p\})$ is the leaf $\wt{M} \times \{\rho(\gamma)(p)\}$ and $l$ is mapped to the geodesic with negative endpoint $\gamma(\xi)$ and positive endpoint $\gamma(f(p)) = f(\rho(\gamma)(p))$, which is a leaf of $\mathcal{G}$.  By construction, we also have that $\cF^h$ and $\cF^v$ descend to $N_\rho$.  Abusing notation slightly, we refer to these induced  foliations on $N_\rho$ also by $\cF^h, \cF^v$ and $\mathcal{G}$. 
(The reader following the example of Remark \ref{rem_surface} will note that
in that case $N_\rho = T^1M$ and $\cF^h$ and $\cF^v$ are the stable and unstable foliations of geodesic flow.)

Let $\wt \varphi^t$ be a parameterization of the foliation $\mathcal{G}$ in $\wt M \times S^1$ so that on each leaf of $\cF^h$ it is unit speed towards $f(p)$ in the negatively curved metric on $\wt M$ lifted from $M$.  Let $\varphi^t$ denote the induced flow on $M$. 

\begin{proposition} \label{prop-susp}
The flow $\varphi^t$ is not a suspension. 
\end{proposition}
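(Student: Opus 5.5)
We must show $\varphi^t$ on $N_\rho$ is not (orbit equivalent to) a suspension. A suspension flow admits a global cross-section: a closed codimension one submanifold $\Sigma$ transverse to the flow, meeting every orbit. I will derive a contradiction from the existence of such a $\Sigma$ by looking at the fundamental group and at the homotopy class of closed orbits. For a suspension, $N_\rho$ fibers over $S^1$ with fiber $\Sigma$. So $\pi_1(N_\rho)$ surjects onto $\mathbb Z$, and every closed orbit maps to a positive multiple of the generator. Moreover, every orbit meets $\Sigma$ within uniformly bounded time.

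The plan is to use the structure of $\cF^h$. Each leaf of $\cF^h$ is a quotient of $\wt M \times \{p\}$ by the stabilizer of $p$. The flow lines in that leaf are geodesics converging to $f(p)$, so every flow line in a given horizontal leaf is forward asymptotic to every other. First consider a horizontal leaf containing a closed orbit. Such a leaf exists: take $\gamma\in\pi_1(M)$ with $\rho(\gamma)$ fixing $p$ and $\gamma$ having attracting fixed point $f(p)$. A closed orbit then corresponds to the axis of $\gamma$, and its free homotopy class in $N_\rho$ projects to the class of $\gamma$ in $\pi_1(M)$ under $N_\rho\to M$.

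The key step is to produce a closed orbit which is null-homologous, or more directly a pair of closed orbits freely homotopic with opposite orientation. Either of these contradicts the existence of a cross-section, since every closed orbit has positive intersection number with $[\Sigma]$. Candidates come from the axis of $\gamma$ and the axis of $\gamma^{-1}$. Both project to the same geodesic in $M$ with opposite orientations. I need both to be flow orbits, i.e. $\rho(\gamma)$ has a fixed point $p$ with $f(p)=\gamma^+$ and a fixed point $q$ with $f(q)=\gamma^-$. This should follow from equivariance and minimality. The closed set $f^{-1}(\gamma^{\pm})$ is $\rho(\gamma)$-invariant, and fixed points of $\rho(\gamma)$ are mapped to fixed points of $\gamma$ on $\partial\wt M$, namely $\gamma^\pm$. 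Both occur because $f$ is surjective: minimality makes the image a closed $\pi_1(M)$-invariant set, hence all of $\partial\wt M$. Then $\rho(\gamma)$ acting on a preimage set yields fixed points by a dynamics argument, taking a power if needed.

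The orbits $\alpha$, on the axis of $\gamma$ in leaf $p$, and $\beta$, on the axis read backwards in leaf $q$, then represent $\gamma$ and $\gamma^{-1}$ in $\pi_1(M)$. I must check they are opposite in $H_1(N_\rho)$ modulo the fiber class. I expect this bookkeeping to be the main obstacle, because the circle fiber contributes to homology. To avoid it, I would instead pull $[\Sigma]$ back as a class $c\in H^1(N_\rho)$. Restricted to a horizontal leaf $\wt M\times\{p\}/\mathrm{Stab}(p)$, $c$ is positive on the closed orbit there. I would compare $c$ on $\alpha$ and on $\beta$ via the homotopy in $\wt M\times S^1$ moving $p$ to $q$ along an arc and translating the axis. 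This shows $c(\alpha)=-c(\beta)$ up to a multiple of $c(\text{fiber})$, and the fiber term would be handled by passing to a power of $\gamma$.

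A cleaner fallback uses the bounded return time. In a horizontal leaf, forward asymptotic orbits of the geodesic flow towards $f(p)$ include orbits staying in a compact part of $M$ for arbitrarily long backward time while spiraling. I would then argue that $\Sigma\cap$leaf would be a closed transversal to a flow in which all orbits are forward asymptotic. Such a leaf is a (possibly noncompact) hyperbolic manifold, and this transversal would give a nonzero cohomology class positive on the axis of $\gamma$. It would also be positive on the orbits which, in the same leaf, come in along $\gamma^{-1}$-direction, and I would reach the contradiction there.
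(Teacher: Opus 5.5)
Your strategy is sound: produce two closed orbits freely homotopic to each other's inverses, which a global cross-section forbids because every closed orbit meets $[\Sigma]$ positively. It is exactly the alternative argument sketched in the paper's remark on free homotopy classes of periodic orbits in Section~\ref{sec_topological}. The paper's main proof is different: codimension-one Anosov homeomorphisms are conjugate to toral automorphisms (Franks--Newhouse), so a suspension has solvable $\pi_1$, while $\pi_1(N_\rho)$ surjects onto $\pi_1(M)$ and contains free groups. Your route needs neither that classification nor the Anosov property of $\varphi^t$.

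However, the step that carries all the content is not proved. You need a single $k$ such that $\rho(\gamma^k)$ has a fixed point over $\gamma^+$ \emph{and} one over $\gamma^-$. Your justification, ``both occur because $f$ is surjective, then a dynamics argument'', does not give this. Surjectivity only makes $f^{-1}(\gamma^{\pm})$ nonempty closed $\rho(\gamma)$-invariant sets, and such sets need contain no periodic point (think of the minimal Cantor set of a Denjoy homeomorphism). The missing idea is that of Lemma~\ref{lema-periodicorbits}. Let $J$ be a component of $S^1\setminus f^{-1}\{\gamma^+,\gamma^-\}$. The north--south dynamics of $\gamma$ on $\partial\wt M$ keeps $\diam(f(\rho(\gamma)^nJ))=\diam(\gamma^n f(J))$ bounded below for all $n$ of one sign. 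By uniform continuity of $f$, these complementary intervals then have length bounded below, so they cannot be pairwise disjoint, and $\rho(\gamma^k)J=J$ for some $k\geq 1$. The endpoints $a,b$ of $J$ are fixed by $\rho(\gamma^k)$, and $f(a)\neq f(b)$. Indeed, if both equalled $\gamma^+$, the $\gamma^k$-invariant set $f(J)$ would accumulate on $\gamma^-$. That forces $\gamma^-\in f(\overline J)=f(J)\cup\{\gamma^+\}$, which is absurd.

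Your handling of the fiber class is also not right. If the two orbits represented $\hat\gamma$ and $\hat\gamma^{-1}z^m$, with $z$ the fiber class, passing to powers would multiply the defect $m\,c(z)$ rather than remove it. In fact no defect arises once $a,b$ are chosen as above. The arc $\overline J=[a,b]$ is $\rho(\gamma^k)$-invariant. So if $\ell\subset\wt M$ is the axis of $\gamma$, the strip $\ell\times\overline J\subset \wt M\times S^1$ is $\gamma^k$-invariant and descends to an annulus in $N_\rho$. Its boundary circles are the periodic orbits in the leaves through $a$ and $b$, and these are traversed in opposite directions along $\ell$, since one flows towards $\gamma^+$ and the other towards $\gamma^-$. (Equivalently, the lift of $\rho(\gamma^k)$ to $\bR$ fixing a lift of $a$ also fixes the adjacent lift of $b$.)

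Finally, your ``cleaner fallback'' does not work and should be dropped. Inside a single horizontal leaf every closed orbit is an axis oriented towards the same point $f(p)$, so no sign conflict can be found there.
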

\begin{proof}
This follows by a simple computation of the fundamental group.   Codimension one Anosov homeomorphisms are conjugate to toral automorphisms (\cite{Franks, Newhouse}, see also \cite{Sumi}), and so the fundamental group of a suspension is solvable. This is not the case for $\pi_1(N_\rho)$, which contains nonabelian free groups.   Alternatively, in dimension $\geq 4$, a manifold with a suspension flow contains a $\mathbb{Z}^3$, and $\pi_1(N_\rho)$ does not. 
\end{proof}

In the next section we show that $\varphi^t$ is topologically Anosov and use this to prove Corollary \ref{cor-CTmaps}.  
In Section \ref{section-smooth}, we show that for a suitable choice of smooth structure, $\varphi^t$ can be perturbed to a flow that is Anosov. This is independent of Section \ref{sec_topological}, so these sections can be read in any order.

\section{Proof of Theorem \ref{thm_main} Topological case, and Corollary \ref{cor-CTmaps}} \label{sec_topological}

 \begin{definition}\label{def_TAF} (Topological Anosov flow)
 Let $N$ be a compact manifold with a Riemannian metric.  A flow $\phi^t$ on $N$ with no stationary points is  
 a  \emph{topological Anosov flow} if it satisfies the following: 
 \begin{enumerate}[label = (\roman*)]
\item (Transverse foliations) \label{item_TAF_weak_foliations}  There exist topologically transverse $C^0$ foliations $\cF^{s}$ and $\cF^{u}$ whose leaves are saturated by orbits of $\phi^t$ and intersect along orbits of $\phi^t$.
\item (Asymptotic orbits along leaves) \label{item_TAF_forward_asymptotic}  For any $\eps>0$ sufficiently small, there exists $\delta > 0$ such that if 
$y \in \cF_\delta^{s}(x)$ (resp.~$y \in \cF_\delta^{u}(x)$), there is a standard reparameterization $\sigma$ such that $d\left(\phi^t(x), \phi^{\sigma(t)}(y)\right) < \eps$ for all $t>0$ and tends to 0 as $t\to +\infty$ (resp.~$t\to -\infty$). 
\item (Separation along leaves) \label{item_TAF_backwards_expansivity}
There
exists $\eps>0$ so that for any $\delta>0$ sufficiently small, if $y \in \cF^s_\eps(x)$ (resp.~$y \in\cF^u_\eps(x)$) is not in the same $\eps$-local orbit as $x$, then for any standard reparameterization $\sigma$, there exists $t<0$ (resp.~$t>0$) with $d(\phi^t(x), \phi^{\sigma(t)}(y)) > \delta$. 
\end{enumerate}
 \end{definition}
 
\noindent A {\em standard reparameterization} is a nondecreasing function $\sigma: \bR \to \bR$ with $\sigma(0) = 0$. For a foliation $\cF$, the notation $\cF_\delta(x)$ means the {\em $\delta$-local leaf} of $\cF$, which is the connected component containing $x$ of the intersection of the leaf $\cF(x)$ containing $x$, with the $\delta$-ball about $x$. 
While the definition is framed in terms of distances, on a compact manifold the property of being topologically Anosov is independent of the choice of metric.   Note also that the separation property \ref{item_TAF_backwards_expansivity} implies that topologically Anosov flows are expansive.  See \cite[Ch. 1]{BM} for more discussion.
 
We will show that the flow $\varphi^t$ from Section \ref{sec_construction} is topologically Anosov with $\cF^h$ playing the role of $\cF^s$  and $\cF^v$ as $\cF^u$.  By construction these foliations are topologically transverse and saturated by orbits.  Items \ref{item_TAF_forward_asymptotic} and \ref{item_TAF_backwards_expansivity} are already satisfied for $\cF^h$ because orbits along $\cF^h$-leaves are geodesics with a common forward endpoint, which are forwards-asymptotic in any manifold of strict negative curvature.

So it remains to verify \ref{item_TAF_forward_asymptotic} and \ref{item_TAF_backwards_expansivity} for $\cF^v$.  
To do this, we will first find a periodic orbit $\alpha$ (in fact we find many, see Lemma \ref{lema-periodicorbits}), then show \ref{item_TAF_forward_asymptotic} and \ref{item_TAF_backwards_expansivity} for $\cF^v$ locally along $\alpha$ by studying the transverse holonomy of $\cF^h$, and then use minimality of the action $\rho$ to propagate the behavior everywhere. This argument is largely inspired by one due to Calegari for flows on 3-manifolds (see \cite[ch. 5]{Calegari}).  Related arguments, in a different context (that of codimension 1 dominated splittings), appear in the proof of Corollary 1.5 of \cite{PujSam}.

\begin{lemma}\label{lema-periodicorbits}
For every $\gamma \in \pi_1(M)$ there is some\footnote{In fact, with more care, one could show this $k$ is uniform, but this will also follow from Corollary \ref{cor-CTmaps}.}  $k$ so that $\rho(\gamma^k)$ has fixed points in $S^1$, each of which corresponds to some periodic orbit of $\varphi^t$.
\end{lemma}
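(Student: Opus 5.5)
The plan has two parts: first show that some power $\rho(\gamma^k)$ has a fixed point, using only the equivariance of $f$ and the north--south dynamics of $\gamma$ on $\partial \wt M$; then show directly that each fixed point produces a closed orbit. Throughout, $\gamma \neq 1$. Since $M$ is closed and negatively curved, $\gamma$ acts on $\wt M$ as a hyperbolic isometry. It has an axis with two distinct endpoints $\gamma^+ \neq \gamma^-$ in $\partial \wt M$. For every $\eta \neq \gamma^-$ we have $\gamma^n \eta \to \gamma^+$ as $n \to +\infty$, and for every $\eta \neq \gamma^+$ we have $\gamma^{n}\eta \to \gamma^-$ as $n \to -\infty$.

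\emph{Step 1: some power of $\rho(\gamma)$ has a fixed point.} It suffices to show that $\rho(\gamma)$ has a periodic point, i.e.\ that its rotation number is rational. Suppose instead that the rotation number is irrational. By Poincar\'e--Denjoy theory, $\rho(\gamma)$ then has a unique minimal set $K \subset S^1$, which is a circle or a Cantor set. Every point $p \in K$ is both forward and backward recurrent. So there are sequences $n_i \to +\infty$ and $m_i \to -\infty$ with $\rho(\gamma^{n_i})p \to p$ and $\rho(\gamma^{m_i})p \to p$. By continuity and equivariance of $f$,
\[ \gamma^{n_i} f(p) = f(\rho(\gamma^{n_i})p) \to f(p). \]
By the north--south dynamics, $\gamma^{n_i} f(p) \to \gamma^+$ unless $f(p) = \gamma^-$; and $\gamma^+$ is itself fixed. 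In either case $f(p) \in \{\gamma^+, \gamma^-\}$. The same argument with $m_i \to -\infty$ then shows that $f(p)$ can only be $\gamma^+$ if $f(p)=\gamma^+$ to begin with. More precisely, the forward limit forces $f(p) \in \{\gamma^-,\gamma^+\}$ with $f(p)$ fixed by $\gamma$, so no contradiction arises yet.

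To get a contradiction, I refine this using a point $p \in K$ that is not fixed, which exists since $\rho(\gamma)$ has no fixed points. The orbit $\rho(\gamma^n)p$, $n \ge 1$, is infinite and accumulates on $p$. Then $f(\rho(\gamma^n)p) = \gamma^n f(p) = f(p)$ for all $n$, so $f$ is constant on the orbit closure of $p$, which is $K$. Consequently $f$ is constant on $\bigcup_{g} \rho(g)K$, which is dense in $S^1$ by minimality of $\rho$. Hence $f$ is constant on $S^1$. This contradicts equivariance: the constant value would be a global fixed point of $\pi_1(M)$ on $\partial \wt M$, and no such point exists. So $\rho(\gamma)$ has rational rotation number, and some power $\rho(\gamma^k)$ has fixed points.

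\emph{Step 2: fixed points give periodic orbits.} Let $p$ be fixed by $\rho(\gamma^k)$. Then
\[ \gamma^k f(p) = f(\rho(\gamma^k)p) = f(p), \]
so $f(p)$ is one of the fixed points $\gamma^\pm$ of $\gamma^k$. In the leaf $\wt M \times \{p\}$, which $\gamma^k$ preserves, take the leaf of $\mathcal{G}$ given by the axis of $\gamma$, oriented toward $f(p)$. Its other endpoint is $\gamma^\mp \neq f(p)$, so this axis is indeed a leaf of $\mathcal{G}$. The axis is $\gamma^k$-invariant, so it projects to a closed orbit of $\varphi^t$ in $N_\rho$. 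Its period is the translation length of $\gamma^k$, with the flow direction agreeing with or opposite to the translation direction according to whether $f(p) = \gamma^+$ or $f(p) = \gamma^-$.

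\emph{Main obstacle.} The delicate point is Step 1, which rules out irrational rotation number. The key observation is that a non-fixed recurrent point forces $f$ to be constant on a $\rho(\gamma)$-minimal set. Minimality of $\rho$ together with continuity then makes $f$ globally constant, which is impossible.
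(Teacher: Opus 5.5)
Your Step 2 is correct and matches the paper's argument. Step 1, however, has a genuine gap at the sentence ``$f$ is constant on $\bigcup_{g}\rho(g)K$''. You correctly show that $f$ is constant on the minimal set $K$ of $\rho(\gamma)$, with value some $\xi \in \{\gamma^+,\gamma^-\}$. But equivariance gives $f(\rho(g)q) = g f(q) = g\xi$ for $q \in K$. So $f$ is constant on each translate $\rho(g)K$, but with a value $g\xi$ that changes with $g$, and density of the union gives nothing.

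Your argument therefore only reaches a contradiction when $K = S^1$. The Denjoy case, where $K$ is a Cantor set, is exactly the case that needs work: minimality of the group action $\rho$ does not by itself prevent an individual element from having an exceptional minimal set. In that case $f \equiv \xi$ on $K$ is not contradictory on its own; the translates $\rho(g)K$ with $g\xi \neq \xi$ are simply disjoint from $K$.

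The missing idea is to use a point whose image is \emph{not} fixed by $\gamma$, and to use both time directions.
\begin{itemize}
\item Since $f(S^1)$ is compact, nonempty and $\pi_1(M)$-invariant, and $\pi_1(M)$ acts minimally on $\partial \wt M$, the map $f$ is surjective. Pick $q$ with $\eta = f(q) \notin \{\gamma^+,\gamma^-\}$.
\item The $\omega$- and $\alpha$-limit sets of $q$ under $\rho(\gamma)$ are nonempty closed invariant sets, so both contain the unique minimal set $K$.
\item For $p \in K$, choose $n_i \to +\infty$ and $m_i \to -\infty$ with $\rho(\gamma^{n_i})q \to p$ and $\rho(\gamma^{m_i})q \to p$.
\item Continuity and equivariance give $f(p) = \lim \gamma^{n_i}\eta = \gamma^+$ and $f(p) = \lim \gamma^{m_i}\eta = \gamma^-$, a contradiction.
\end{itemize}
This makes your analysis of $f$ on $K$ unnecessary.

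With this repair, your rotation-number route is a valid alternative to the paper's. The paper avoids Poincar\'e--Denjoy theory. It takes a complementary interval $J$ of the $\rho(\gamma)$-invariant closed set $f^{-1}(\{\gamma^+,\gamma^-\})$ and shows $J$ is periodic. If the iterates $\rho(\gamma)^n J$ were pairwise disjoint they would shrink, so by uniform continuity $\gamma^n f(J)$ would shrink, which north--south dynamics forbids. That argument also covers orientation-reversing $\rho(\gamma)$, which your use of rotation numbers tacitly excludes. The omission is harmless, since an orientation-reversing circle homeomorphism already has fixed points.
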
 
\begin{proof}
The image of $f$ is compact and $\pi_1(M)$-invariant (by equivariance).  Since $\pi_1(M)$ acts minimally on $\partial \wt{M}$, it follows that $f$ must be surjective. 

Let $\gamma$ be a nontrivial element of $\pi_1(M)$, with attracting and repelling fixed
points $\gamma^+, \gamma^-$ (respectively) in $\partial \wt{M}$. Then  
 $B := f^{-1}(\{ \gamma^+, \gamma^- \})$ is a nonempty, closed set.  
Let $J$ be a connected component of $S^1 \setminus B$ with endpoints $a, b$. 
We claim $J$ is periodic.  First, if $f(a) = f(b)$, assume without loss of generality that they are equal to $\gamma^+$.
Then $\gamma^n(f(J))$ has points converging to $\gamma^-$, as $n \to -\infty$, so its
diameter is bounded below by a positive number for all $n \leq 0$.  
Since $f$ is uniformly continuous, $\rho(\gamma)^n(J)$ cannot all be disjoint, so $J$ is periodic under $\rho(\gamma)$.
(In fact, this now gives a contradiction since $\gamma^-$ is not in the closure of $f(J)$). 
When $f(a) \neq f(b)$, then $\gamma^n(f(J))$ limits on both $\gamma^-$ and $\gamma^+$, and therefore has diameter bounded below, again implying that $J$ is periodic under $\rho(\gamma)$.

Thus, some power $\rho(\gamma^k)$ has a fixed point $p \in S^1$.  
This means that $\gamma^k$ acts on $\wt{M} \times S^1$ preserving the leaf $\wt M \times \{p\}$ and acting on this leaf by the corresponding hyperbolic isometry.  Since $\gamma^k(f(p)) = f (\rho(\gamma^k)(p)) = f(p)$, the geodesic axis $\alpha$ of $\gamma^k$ in $\wt M$ is a 
leaf $\alpha \times \{ p \}$ of $\mathcal{G}$ in $\wt M \times \{p\}$, which is invariant under $\gamma^k$, and its quotient in $N_\rho$ is a periodic orbit of $\varphi^t$.  
\end{proof}

\subsection{Transverse holonomy of $\cF^h$} 
Let $\alpha$ be a periodic orbit of $\varphi^t$ (which exists by Lemma \ref{lema-periodicorbits}), and $B_\alpha$ an embedded tubular neighborhood of $\alpha$ in $N_\rho$. 
Let $T$ be a transversal to $\varphi^t$ based on $\cF^h(\alpha)$, chosen so that $\cF^h$ and $\cF^v$ trivially foliate $T$. Fix coordinates $T \cong D \times [0,1)$, where $D = (-1,1)^{n-1}$ denotes the open disk, so that $\alpha \cap T$ is at $(0, 0)$ and the intersection of $T$ with $\cF^h$ is the horizontal foliation and $\cF^v$ is the vertical.    (See Figure \ref{fig_sawblade} for an image of the lift to $\wt M \times S^1$.)

\begin{figure}[h]
\centering 
\begin{overpic}[width=16cm]{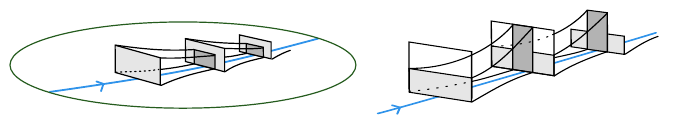}
\put(4,10){{\small $\cF^h(\tilde \alpha)$}}
\put(46,11.5){{\small $\tilde \alpha$}}
\put(20,9){{\small $\tilde T$}}
\put(19,6){{\tiny $D$}}
\put(30,7){{\tiny $r(\tilde T) \subset \alpha \tilde T$}}
\put(16,12){{\tiny $1$}}
\put(16,8){{\tiny $0$}}

\put(58.5,11){{\small $\tilde T$}}
\put(65,6){{\small $ T'$}}
\put(57,1){{\small $\tilde \alpha$}}
\put(69.5,13){{\small $\alpha \tilde T$}}
\put(76,16){{\tiny $r(T')$}}

\end{overpic}
\caption{{\small Transversals and first-returns along $\wt \alpha$. In the left one sees the configuration leading to a contradiction and in the right the actual configuration.}}
\label{fig_sawblade}
\end{figure}

Let $T' \subset T$ be the subrectangle containing $\alpha \cap T$ where the first-return to $T$ along $\phi^t$-orbits is defined and continuous.  We choose $T$ small enough so that the union of orbit segments from $T'$ to $T$ is contained in the tubular neighborhood $B_\alpha$.   Let $r: T' \to T$ denote this first return. 

Since orbits forward converge along leaves of $\cF^h$, the set $T'$ is of the form
$D \times [0,a)$ for some $a \leq 1$, and $r(T') = D' \times [0,b)$ where $D'$ has closure strictly contained in $D$ and $b \in (0,1]$.

\begin{lemma}[Transverse contraction in the past along $\alpha$]  \label{lem_periodic}
In the coordinates above, $a <1$ and $b=1$.  
In other words, the inverse of $r$ is defined and continuous on $D' \times [0,1)$ and strictly contracts the $[0,1)$ factor. Moreover,  for any $c \in T$ on the $\cF^v$ leaf of $\alpha$ 
we have $r^{-n}(c) \to 0$ as $n \to \infty$. 
\end{lemma}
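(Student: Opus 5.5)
The plan is to lift everything to $\wt M \times S^1$ and read off the first return map $r$ from the action of the deck transformation $g=\gamma^k$ that fixes the leaf $\wt M\times\{p\}$ containing $\tilde\alpha$. In the coordinates $T\cong D\times[0,1)$, the $[0,1)$ factor is a half-open interval $I\subset S^1$ starting at $p$, and $\cF^v$-plaques are vertical. We may assume $g$ translates $\tilde\alpha$ forward; if it does not, replace $g$ by $g^{-1}$. Then the first return is the composition of two maps: flow forward along $\cF^h$-leaves from $\tilde T$ until hitting $g\tilde T$, and then apply $g^{-1}$. The flow part preserves the $S^1$-coordinate because the flow is tangent to horizontal leaves. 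So the transverse ($[0,1)$) component of $r$ is just $\rho(g)^{-1}$ restricted to $I$, where it is defined. The key point is that $I$ lies in a complementary interval of $\mathrm{Fix}(\rho(g))$, or at least $p$ is an endpoint of such an interval when $I$ is small. We can shrink $T$ and choose $p$ to be a fixed point such that the adjacent open interval $J=(p,q)$ contains no fixed points; $p$ is isolated on that side, otherwise we choose the transversal direction appropriately. On $J$, $\rho(g)$ moves every point monotonically, either toward $p$ or toward $q$.

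The main step is to rule out the case where $\rho(g)$ moves points of $J$ toward $p$, which would mean $\rho(g)^{-1}$ expands away from $p$. This is the left configuration of Figure \ref{fig_sawblade}. Suppose it happened. Then for $s\in J$ near $p$, the point $s$ would not return under $r$ into $T$ with the same height, and $r^{-1}$ would not be defined up to height $1$. More importantly, we compare the horizontal geometry. Along $\wt M\times\{s\}$ the orbits are geodesics toward $f(s)$, which is close to $f(p)=g^+$. Iterating the return, the heights $\rho(g)^{n}(s)\to p$. I expect this part to be the main obstacle, and the first thing I would try is the following. Use $\cF^v$: the $\cF^v$-leaf through $\alpha$ is $L_{g^-}$, which is $g$-invariant, and the return map restricted to this leaf must be compatible with the geodesics from $g^-$ to $f(s)$. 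Applying $g^{-n}$ to the geodesic from $g^-$ to $f(s)$ gives a geodesic from $g^-$ to $g^{-n}f(s)$. If $\rho(g)^{-n}(s)$ stays in $J$ and converges to $q$, these geodesics converge to the geodesic from $g^-$ to $f(q)$. If instead it converged to $p$, then $g^{-n}f(s)\to f(p)=g^+$. But $g^{-n}$ pushes every point other than $g^+$ toward $g^-$, so $f(s)=g^+$ for all $s$ near $p$. Iterating this gives $f$ constant on $J$. That contradicts the conclusion of Lemma \ref{lema-periodicorbits} that $J$ maps onto something meeting both $g^\pm$, together with minimality, which gives non-constancy of $f$ on every interval: a constant value on an interval would propagate by minimality to make $f$ locally constant, hence constant, contradicting surjectivity. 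Hence $\rho(g)^{-1}$ contracts $J$ toward $p$.

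With the direction established, the conclusions follow. Because $\rho(g)^{-1}$ maps $[p,q)$ into itself and contracts toward $p$, the inverse return $r^{-1}$ is defined on all of $D'\times[0,1)$, giving $b=1$. The $[0,1)$-coordinate is strictly decreased, and the image of height $1$ is at height $a<1$, so $a<1$. The $D$-component of $r^{-1}$ is obtained by flowing backward along the horizontal geodesics and applying $g$; for the $\cF^v$-leaf of $\alpha$ it stays in $D'$, since these geodesics start at $g^-$. Finally, for $c$ on the $\cF^v$ leaf of $\alpha$, the heights of $r^{-n}(c)$ form a decreasing sequence in $[0,1)$ with no fixed point other than $0$, so they tend to $0$. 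The $D$-coordinate also tends to $0$, because the corresponding geodesics from $g^-$ to $g^{n}f(s)$ converge to the axis of $g$ by continuity of $f$ at $p$. Hence $r^{-n}(c)\to 0$.
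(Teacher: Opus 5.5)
Your strategy differs from the paper's. You read off the transverse part of $r$ as $\rho(g)^{-1}$ and use equivariance of $f$ together with the north--south dynamics of $g$ on $\partial\wt M$. The paper instead supposes $r$ is defined on all of $T$, so that $\wt S$ contains forward orbit rays on every leaf $\wt M\times\{s\}$, $s\in[p,q)$. It then uses minimality to find $\gamma$ with $\rho(\gamma)(p)\in(p,q)$, which forces $\gamma\wt\alpha$ into $\wt S$ and contradicts that $S$ lies in an embedded tube.

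Your route could work, but as written the central step has the dynamics backwards and is internally inconsistent. As you correctly say, $r$ acts transversally by $\rho(g)^{-1}$. So the conclusion $a<1$, $b=1$ means $\rho(g)^{-1}$ pushes $J$ away from $p$, i.e.\ $\rho(g)$ contracts $J$ toward $p$. That is precisely the case you set out to rule out, and it is what actually happens: in the example of Remark \ref{rem_surface} ($f=\mathrm{id}$), $\rho(g)=g$ and $p=g^+$ is attracting. Your computation derives a contradiction under the hypothesis $\rho(g)^{-n}(s)\to p$, i.e.\ under ``$\rho(g)^{-1}$ contracts toward $p$''. You then conclude ``Hence $\rho(g)^{-1}$ contracts toward $p$'', which is the case you just excluded. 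Your final paragraph then lets $r^{-1}$ act transversally by $\rho(g)^{-1}$, contradicting your own identification (it acts by $\rho(g)$). The two sign errors cancel in the last sentence, but nothing has been proved. Indeed, if $\rho(g)^{-1}$ mapped $[p,q)$ into itself contracting toward $p$, then $r$ would be defined on all of $T$, i.e.\ $a=1$. That is exactly the configuration the lemma excludes (left picture of Figure \ref{fig_sawblade}).

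Read correctly, your computation is the right one. If $\rho(g)^{-n}(s)\to p$, then $g^{-n}f(s)\to g^+$, hence $f\equiv g^+$ on $J$. This is impossible, since $f$ is nonconstant on every interval (by minimality and surjectivity). So $\rho(g)$ contracts $J$ toward $p$, and $r^{-1}$ acts transversally by $\rho(g)$ on all of $[0,1)$; then $a<1$, $b=1$ and $r^{-n}(c)\to 0$ follow.

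There is a second gap: you may not choose $p$ or the side of $T$. The lemma concerns a given $\alpha$ and a given side, and Lemma \ref{lem_expansive} needs both sides. So you must prove that $p$ is isolated in $\mathrm{Fix}\,\rho(g)$ from the given side. The same idea does this:
\begin{itemize}
\item $f(\mathrm{Fix}\,\rho(g))\subset\{g^+,g^-\}$, and continuity of $f$ at $p$ forces fixed points near $p$ to map to $g^+$.
\item $\mathrm{Fix}\,\rho(g)$ has empty interior, since otherwise $f$ would be constant on an interval.
\item So accumulation of fixed points at $p$ would produce a complementary interval with both endpoints mapping to $g^+$.
\item On that interval your argument, applied at the endpoint attracting for $\rho(g)^{-1}$, gives $f\equiv g^+$, a contradiction.
\end{itemize}
With these corrections your proof is a valid alternative to the paper's. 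It yields the extra information, noted in the paper's remark on free homotopy classes of periodic orbits, that preimages of $g^+$ are attractors of $\rho(g)$. The tube argument, by contrast, never needs to analyze $\rho(g)$ on $S^1$.
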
 

\begin{proof} 
Suppose for contradiction this is not the case. 
Then $r$ is defined and continuous on all of $T$ and maps $T$ to $D' \times [0,b)$ for some $b \leq 1$. (See Figure \ref{fig_sawblade} for a picture lifted to $\wt M \times S^1$.)  Let $S \subset B_\alpha \subset N_\rho$ denote the union of orbit segments from $T$ to their first return in $r(T)$.  
We lift this picture to $\wt{M} \times S^1$: 
Let $\wt \alpha$ be a lift of $\alpha$ to $\wt M \times S^1$,  let $\wt{M} \times \{p\}$ be its $\cF^h$ leaf, and $\wt T$ a lift of $T$ meeting $\wt \alpha$, so $T$ intersects the leaves $\wt M \times \{s\}$ for $s$ in some interval $[p,q) \subset S^1$, and let $\wt S$ denote the connected component of the pre-image of $S$ containing $\wt \alpha$.  
Here we abuse notation and also use $\alpha$ for the element
of $\pi_1(M)$ associated to $\alpha$ and leaving $\wt \alpha$ invariant.
Since $S$ is contained in the tubular neighborhood $B_\alpha$, for all $\gamma \in \pi_1(M)$ not equal to a power of $\alpha$, the image of $\wt S$ under the action of $\gamma$ on $\wt M \times S^1$ is disjoint from $\wt S$.  

However, since $\rho$ is minimal, there exists $\gamma \in \pi_1(M)$, not a power of $\alpha$, such that $\rho(\gamma)(p) \in (p,q)$.  Since orbits converge forward along $\cF^h$ leaves, the image of $\wt \alpha$ under $\gamma$ intersects $\wt S$.  But this means that $\gamma \wt S \cap \wt S \neq \emptyset$, contradiction. 

Since the statement applies to any sufficiently small transversal, we obtain the final assertion of the lemma.  
\end{proof}

\begin{lemma}  \label{lem_expansive} 
Property \ref{item_TAF_backwards_expansivity} holds for $\cF^v$. 
\end{lemma}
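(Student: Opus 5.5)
We must show: there is $\eps>0$ such that if $y\in\cF^v_\eps(x)$ is not on the same local orbit as $x$, then forward in time $\varphi^t(x)$ and $\varphi^{\sigma(t)}(y)$ separate by at least $\delta$. The plan is to reduce to the local picture near a periodic orbit, where Lemma \ref{lem_periodic} gives expansion, and to reach that picture using minimality of $\rho$.

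\emph{Reduction.} Two points $x,y$ on a common $\cF^v$ leaf, not on the same local orbit, lie in distinct $\cF^h$ leaves. Lift to $\wt M\times S^1$, with $\tilde x\in \wt M\times\{s\}$, $\tilde y\in\wt M\times\{s'\}$, $s\neq s'$, and both on the same $L_\xi$. In the lift, the transverse parameter $s$ is constant along orbits. Separation must therefore come from the geometry of $N_\rho$, that is, from returning to a fixed transversal.

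\emph{Key step.} Fix the periodic orbit $\alpha$ and transversal $T\cong D\times[0,1)$ from Lemma \ref{lem_periodic}, where $r^{-1}$ strictly contracts the $[0,1)$ factor on the $\cF^v$ leaf of $\alpha$, so $r$ expands it. Since orbits contract forward along $\cF^h$ leaves and $\rho$ is minimal, every $\cF^h$ leaf is dense. Hence the forward orbit of $x$ enters a small neighborhood of $\alpha$ and then shadows $\alpha$, following $\cF^h$-asymptoticity if $x$ lies on $\cF^h(\alpha)$. More robustly, I would use compactness of $N_\rho$ and finitely many periodic orbits with transversals, obtained from Lemma \ref{lema-periodicorbits} applied to several $\gamma$. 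The goal is this: every forward orbit segment of a uniform length $L$ passes through one of finitely many boxes $T_i$, and there it meets $T_i$ in a point whose $\cF^v$-plaque is mapped by holonomy into the corresponding $D\times[0,1)$ product.

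Along $\cF^v$ the holonomy from $x$ to that box is a homeomorphism of the vertical factor. The point $y$ is carried to a point at nonzero vertical distance from the orbit of $x$ in $T_i$. Iterating the return map $r_i$ strictly increases the vertical separation until it exceeds a fixed size $\delta$, because $r_i^{-1}$ contracts everything toward $0$ with no other fixed point. This is the content of $a<1$ and of the final assertion of Lemma \ref{lem_periodic}. One must check that the orbit of $x$ keeps returning, or otherwise redo the argument at the next box. For this I would argue by contradiction: if separation never exceeds $\delta$, take limits of the pairs $(\varphi^{t_n}x,\varphi^{\sigma(t_n)}y)$ using compactness, obtaining a pair on a common $\cF^v$ plaque, in distinct $\cF^h$ plaques, whose whole forward orbits stay $\delta$-close.

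\emph{Main obstacle.} The main difficulty is ruling out such a limiting pair, equivalently excluding a nontrivial $\cF^v$ interval whose forward images stay small. Such an interval $I$, flowed forward, sweeps out a thin strip. I would lift to $\wt M\times S^1$: the strip corresponds to an interval $(s,s')\subset S^1$ together with orbits staying close. By minimality there is $\gamma$ with $\rho(\gamma)$ moving a point of $(s,s')$ into $(s,s')$ in a way that realizes a periodic $\cF^h$ leaf inside. This replays the sawblade argument of Lemma \ref{lem_periodic}, producing a transversal on which the return map does not expand, which contradicts Lemma \ref{lem_periodic} applied at that periodic orbit. The interval $I$ is uniformly small, so the strip lies in a tubular neighborhood, and the disjointness-of-translates contradiction used there applies verbatim. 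This yields a uniform $\delta$.
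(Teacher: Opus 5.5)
There is a genuine gap in your \emph{Key step}. Lemma \ref{lem_periodic} is purely topological. On the height coordinate of $T$, the holonomy of $\cF^h$ around $\alpha$ has $0$ as a repelling fixed point, and nothing more.

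\begin{itemize}
\item It gives no growth of the gap between two heights $0<u<u'$ on the same side of $0$. Both can leave $T$ at essentially the same moment while still close, and after that you have no control.
\item Only points of $T$ at height $0$ return infinitely often. A general orbit is expelled after finitely many returns.
\item Density of $\cF^h$-leaves does not make the forward orbit of $x$ approach or shadow $\alpha$.
\end{itemize}

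So ``iterating $r_i$ strictly increases the vertical separation'' fails, and covering $N_\rho$ by finitely many boxes $T_i$ does not help. The limiting-pair step is unjustified, since nothing stops the two limit points from coalescing. It is also unnecessary.

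What is needed is a periodic leaf \emph{between} the two orbits. This is the idea buried in your last paragraph, and it is exactly the paper's proof.

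\begin{itemize}
\item Fix one periodic orbit $\alpha$, with transversals on \emph{both} sides of $\cF^h(\alpha)$ as in Lemma \ref{lem_periodic}. This gives a neighborhood $S$ of $\alpha$ containing its $\eps$-neighborhood.
\item By minimality of $\rho$, after translating by some $\gamma$, the leaf $\wt M\times\{p\}$ of a lift $\wt\alpha$ meets the short $\cF^v$-segment between $\wt x$ and $\wt y$ at a point $z$. The forward orbit of $z$ is asymptotic to $\wt\alpha$.
\item If the orbits of $\wt x$ and $\wt y$ did not $\eps$-separate, they would follow $z$ into $\alpha^k$-translates of the box, on \emph{opposite sides} of $\cF^h(\wt\alpha)$. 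The two-sided repulsion from Lemma \ref{lem_periodic} then expels them on opposite sides, which forces $\eps$-separation.
\end{itemize}

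You should invoke this \emph{conclusion} of Lemma \ref{lem_periodic}, not rerun its disjointness-of-translates argument. That argument used that $\wt S$ lies in a lift of a tubular neighborhood of $\alpha$. Your forward strip instead follows the orbit of $x$ all over $N_\rho$, so your claim that it lies in a tubular neighborhood is false. Only its tail, after $z$ has approached $\wt\alpha$, lies near $\alpha$.

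Uniformity of the separation constant then comes from the fixed neighborhood $S$ of the single orbit $\alpha$, with minimality handling every pair. No compactness argument and no multiple periodic orbits are needed.
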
 

\begin{proof} 
Let $T_0$ and $T_1$ be transversals based on {\em either side} of $\cF^h(\alpha)$ with the property from Lemma \ref{lem_periodic}. Let $T$ denote their union, and let $T' \subset T$ be the subrectangle where the first return map is defined and continuous.  Adapting the notation from above, let $S$ denote the union of orbit segments from $T'$ to their first return image (so $S$ is now an embedded  {\em neighborhood} of $\alpha$ in $N_\rho$), and let $\wt S$ be the lift of $S$ to $\wt{M} \times S^1$ containing $\wt \alpha$.  Take any $\eps$ small enough so that $S$ contains an $\eps$-neighborhood of $\alpha$.   We will show separation with $\delta = \eps$. 

Suppose $x \in N_\rho$, take $y \in \cF^v_\eps(x)$ and not in the same local orbit as $x$, and lift these to $\wt x \in \cF^v(\wt y)$ in $\wt{M} \times S^1$. Recall that $\wt \alpha$ is contained in $\wt M \times \{ p \}$.  
Let also $\wt T'$ be a lift of $T'$ intersecting $\wt \alpha$. By minimality of $\rho$, there exists $\gamma \in \pi_1(M)$ such that $\rho(\gamma)(p)$ lies in the small interval between $\cF^h(\wt x)$ and $\cF^h(\wt y)$, so, up to replacing our original lifts with their translates by the action of $\gamma^{-1}$ on $\wt{M} \times S^1$, we can assume that $\cF^h(\wt \alpha) = \wt{M} \times \{p\}$ intersects a transversal $\tau$ (of length $<\eps$) to $\cF^h$ between $\wt x$ and $\wt{y}$; let $z$ be this point of intersection.  Thus $\wt \varphi^t(z)$ is forward asymptotic to $\wt \alpha$, and
so intersects $\alpha^k(\wt T')$ for all $k$ sufficiently large.  

If the orbits of $\wt x$ and $\wt{y}$ do not $\eps$-separate in the future, then they 
eventually both meet an $\alpha^k$  translate of $\wt T'$  inside $\wt S$.   
This is because the forward orbit of $z$ always keeps intersecting $\alpha^k(\wt T')$, and this orbit is between the orbits of $\wt x$ and $\wt y$. Since Lemma \ref{lem_periodic} applies to {\em every} small transversal to $\alpha$, then in this latter case, by applying further iterates of $r$, we see that the future orbits of $\wt x$ and $\wt{y}$ eventually leave a translate of $\wt T'$ on opposite sides of $\cF^h(\wt \alpha)$, and thus in fact do $\eps$-separate. 
\end{proof} 

\begin{rem} \label{rem_separate_forever}
In fact, the structure of $S$ shows that after the orbits of $\wt x$ and $\wt{y}$ separate, they can never return within distance $\eps$ from each other (in $\wt M$).  Formally speaking, 
there is some time $t_0$ such that for all $t>t_0$, for any standard reparameterization $\sigma$, we have $d(\wt{\varphi}(\wt x),\wt{\varphi}^{\sigma(t)}(\wt{y})) > \eps$.   
\end{rem} 

To prove Property \ref{item_TAF_forward_asymptotic}, we need a preliminary lemma: 
\begin{lemma}[Separation in uniform time]  \label{lem_uniform_time}
Let $\eps$ be as in the proof of Lemma \ref{lem_expansive}, and $0 < k < \eps/2$. There exists $t(k)$ such that for any points 
$y \in \cF^{v}_{\eps/2}(x)$ with $d(y,x) \geq  k$ and not in the same local orbit, for any 
 standard reparameterization $\sigma$, there is $0 < t<t(k)$ with $d(\varphi^t(x), \varphi^{\sigma(t)}(y)) > \eps/2$. 
\end{lemma}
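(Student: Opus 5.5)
I will prove Lemma \ref{lem_uniform_time} by a compactness argument built on Lemma \ref{lem_expansive} and Remark \ref{rem_separate_forever}, and argue by contradiction. Suppose there is no such $t(k)$. Then there are sequences $x_n$, $y_n \in \cF^v_{\eps/2}(x_n)$ with $d(x_n,y_n)\geq k$, not in the same local orbit, and standard reparameterizations $\sigma_n$ such that
\[
d\bigl(\varphi^t(x_n), \varphi^{\sigma_n(t)}(y_n)\bigr) \leq \eps/2
\quad \text{for all } 0<t<n.
\]
By compactness of $N_\rho$ I pass to a subsequence with $x_n \to x$ and $y_n \to y$. Since $\cF^v$ is a $C^0$ foliation, $y \in \overline{\cF^v_{\eps/2}(x)} \subset \cF^v_\eps(x)$ and $d(x,y)\geq k$. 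I also need $y$ not in the $\eps$-local orbit of $x$. For this I use a foliation chart of $\cF^v$ refined by the flow. In such a chart the local orbit through a point is a plaque of the 1-dimensional foliation $\mathcal{G}$. Because $d(x_n,y_n)\geq k$ while the two points lie on the same local $\cF^v$ plaque, they sit at definite transverse distance in the $\mathcal{G}$-direction inside that plaque. This needs a little care. The cleanest route is to shrink $\eps$ slightly if needed, so that any $y\in \cF^v_\eps(x)$ on the local orbit of $x$ is $\varphi^{s}(x)$ with $|s|$ small. Then I observe that if $y_n$ were near the local orbit of $x_n$, the reparameterization $\sigma_n$ could absorb the time shift, which is compatible with the hypothesis.

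That last observation exposes the one delicate point: I do need to rule out $y$ lying on the local orbit of $x$. I will do it through the global structure in $\wt M \times S^1$. Lift to $\wt x_n, \wt y_n$ in the same $\cF^v$ leaf, in horizontal leaves $\wt M\times\{p_n\}$ and $\wt M \times\{q_n\}$. The $\cF^h$-transverse displacement between $\wt x_n$ and $\wt y_n$ is then bounded below, because within a local $\cF^v$ plaque the only directions are along the orbit and transverse to $\cF^h$. Points at distance $\geq k$ that are not on the same local orbit therefore have $p_n \neq q_n$ uniformly apart in the chart. Given that lower bound, the limit $y$ is in a different $\cF^h$ leaf from $x$, hence not in the same local orbit.

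Next I extract a limiting reparameterization. The functions $\sigma_n$ are nondecreasing with $\sigma_n(0)=0$. Since the two orbits stay $\eps/2$-close and the flow has unit speed along horizontal leaves (bounded speed overall), $\sigma_n$ is coarsely Lipschitz on compact sets. By Helly's selection theorem a subsequence converges pointwise to a nondecreasing $\sigma$ with $\sigma(0)=0$, and continuity of the flow gives $d(\varphi^t(x),\varphi^{\sigma(t)}(y))\leq \eps/2$ for all $t>0$. This contradicts Lemma \ref{lem_expansive} (in its forward form for $\cF^v$, with $\delta=\eps$). It even contradicts it with the threshold $\eps/2$, which is still allowed since the lemma's proof works for every $\delta \le \eps$.

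I expect the main obstacle to be the second step: the nondegeneracy of the limit pair, together with the unwieldy discontinuities of the $\sigma_n$ when passing to the limit. If this proves awkward, I will instead argue directly in the universal cover. As in the proof of Lemma \ref{lem_expansive}, I translate by a deck element so that $\cF^h(\wt\alpha)$ passes between $\wt x$ and $\wt y$ at a point $z$, with transverse distance from $z$ to both bounded below in terms of $k$; a finite cover of $N_\rho$ by such configurations makes the translates uniform. Then the time for $\varphi^t(z)$ to enter the neighborhood $\wt S$ is uniformly bounded. After that, a bounded number of iterates of $r^{-1}$ on the transverse coordinate, whose contraction is uniform on compact sets by Lemma \ref{lem_periodic}, pushes the two orbits to opposite sides of $\cF^h(\wt\alpha)$. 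The sum of these two times gives $t(k)$.
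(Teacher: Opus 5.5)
Your strategy is the paper's: argue by contradiction, extract a limit pair $(x,y)$ using compactness of $N_\rho$, and contradict Lemma~\ref{lem_expansive}. Your Helly-type extraction of a limiting reparameterization is a more careful version of the paper's one-line appeal to continuity. The coarse Lipschitz bound on $\sigma_n$ should be checked on lifts to $\wt M\times S^1$, using continuity of the $\sigma_n$.

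The genuine gap is the step ruling out that $y$ lies on the local orbit of $x$. You claim that $d(x_n,y_n)\ge k$ together with ``not in the same local orbit'' forces a uniform lower bound on the $\cF^h$-transverse displacement. This is false, because inside a local $\cF^v$ plaque a displacement of size $k$ can lie almost entirely in the flow direction. Concretely:
\begin{itemize}
\item fix $s>0$ with $d(x,\varphi^s(x))=k$;
\item let $x_\eta\in\cF^v_{\eps/2}(x)$ be at $\cF^h$-transverse distance $\eta$ from $x$, with $x_\eta\to x$;
\item set $y_\eta=\varphi^{s_\eta}(x_\eta)$, with $s_\eta\to s$ chosen so that $d(x,y_\eta)=k$.
\end{itemize}
Then $y_\eta\in\cF^v_{\eps/2}(x)$ lies on a different horizontal leaf from $x$, so it is not on the local orbit of $x$. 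Yet $y_\eta\to\varphi^s(x)$, so your limit pair can be $(x,\varphi^s(x))$, and Lemma~\ref{lem_expansive} says nothing about such a pair. Your own remark that $\sigma_n$ ``could absorb the time shift'' describes exactly this phenomenon.

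The step cannot be repaired, because the statement as written fails on these pairs. Take $\sigma(t)=0$ for $t\le s_\eta$ and $\sigma(t)=t-s_\eta$ for $t\ge s_\eta$.
\begin{itemize}
\item For $t\ge s_\eta$ we have $\varphi^{\sigma(t)}(y_\eta)=\varphi^t(x_\eta)$.
\item For $t\le s_\eta$ the distance $d(\varphi^t(x),y_\eta)$ is at most about $k<\eps/2$.
\end{itemize}
By continuity of $\varphi$ on compact time intervals, for any prescribed $T$ the two orbits stay within $\eps/2$ on $[0,T]$ once $\eta$ is small. So no uniform $t(k)$ exists.

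The paper's proof has the same hidden gap: it applies Lemma~\ref{lem_expansive} to the limit pair without checking that $y$ is off the local orbit of $x$. The fix is to strengthen the hypothesis: require the distance from $y$ to the local orbit of $x$ (equivalently, the $\cF^h$-transverse displacement) to be at least $k$. This is a closed condition, so it passes to the limit pair by continuity of $\varphi$ on compact time sets. The limit pair is then genuinely off the local orbit, and your compactness argument (or the paper's) goes through.

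This stronger form is also what the asymptotic-orbits lemma actually supplies. There the lower bound $k_0$ holds for every reparameterization, so it is really a bound on the distance to the orbit. Your fallback argument in $\wt M\times S^1$ starts from a transverse lower bound ``in terms of $k$'', so it too works only under this corrected hypothesis.
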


\begin{proof} 
This is a standard consequence of compactness (the argument
is made in $N_\rho$).  Suppose for contradiction we had a sequence $t_n \to \infty$ and $y_n \in \cF^{v}_{\eps/2}(x_n)$,  with $d(y,x) \geq  k$
whose orbits fellow-travel at distance $\leq \eps/2$ up to time $t_n$.  Consider an accumulation point $(x, y)$ of the sequence $(x_n, y_n)$ 
 in $N_\rho \times N_\rho$.  
We have that $y \in \overline{\cF^{v}_{\eps/2}(x)}$ and $d(x,y) \geq k$. Hence by Lemma \ref{lem_expansive}, the orbits of $x, y$ will $\eps$-separate in some finite time $t_0$, contradicting that the orbits of arbitrarily close points $x_n, y_n$  will $\eps/2$-fellow-travel for time up to $t_n>t_0$.  
\end{proof} 

\begin{lemma}[Asymptotic orbits]
Property \ref{item_TAF_forward_asymptotic} holds for $\cF^v$.  
\end{lemma}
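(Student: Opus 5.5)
We need backward asymptoticity along $\cF^v$: for $y \in \cF^v_\delta(x)$, the backward orbits stay $\eps$-close and converge to each other as $t \to -\infty$. The plan is to flow backward and use Lemma \ref{lem_uniform_time} to show that the transverse (horizontal-leaf) distance between the orbits shrinks, and to use the geodesic structure inside each leaf of $\cF^v$ to control the along-orbit direction.

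\textbf{Setup.} Work in a leaf $L_\xi$ of $\cF^v$ in $\wt M\times S^1$. Orbits in $L_\xi$ are geodesics from the common backward endpoint $\xi$ to $f(p)$, one in each slice. Two such orbits in nearby slices $p,p'$ both emanate from $\xi$. Hence in $\wt M$ (forgetting the $S^1$ coordinate) they are backward asymptotic by negative curvature. So the $\wt M$-component of the distance, after a suitable standard reparameterization $\sigma$ (matching Busemann levels at $\xi$), tends to $0$ as $t\to-\infty$, and stays small for all $t\le 0$ if $\delta$ is small.

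\textbf{Transverse coordinate.} The remaining issue is the $S^1$ (transverse to $\cF^h$) component. Suppose the local $\cF^h$-transverse distance between $\varphi^t(x)$ and $\varphi^{\sigma(t)}(y)$ failed to go to zero, or grew beyond $\eps$, in backward time. Then there would be times $s_n\to-\infty$ at which the pair $x_n=\varphi^{s_n}(x)$, $y_n$ lies in a common local $\cF^v$-leaf with $d(x_n,y_n)\ge k$ for some fixed $k>0$.
\begin{itemize}
\item By Lemma \ref{lem_uniform_time}, the orbits of $x_n,y_n$ would then $\eps/2$-separate within forward time $t(k)$.
\item But their forward orbits through time $|s_n|\gg t(k)$ are the orbits of $x,y$ near time $0$, which are $\delta$-close. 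This is a contradiction.
\end{itemize}
Applying this with $k$ arbitrary (and $k$ the largest allowed) gives two conclusions:
\begin{itemize}
\item the backward orbits never leave the $\eps/2$-local $\cF^v$-leaf, so they stay $\eps$-close for $t<0$;
\item the distance tends to $0$ as $t\to-\infty$.
\end{itemize}
The first must be arranged by continuity first: choose $\delta$ so that the first exit time is controlled, and argue by the contradiction above at the first time distance reaches $k$.

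\textbf{Main obstacle.} The delicate point is making the "staying inside the local leaf" argument rigorous. Local $\cF^v$-leaves and the reparameterization interact, and Lemma \ref{lem_uniform_time} is stated for points already in $\cF^v_{\eps/2}$. I will handle this with a continuity/first-time argument: let $t_1<0$ be the first (going backward) time the pair reaches distance $k$ while remaining in the $\eps/2$-local leaf. Then Lemma \ref{lem_uniform_time}, applied at that time, forces separation in forward time less than $t(k)$. This is impossible if $|t_1|>t(k)$. If instead $|t_1|\le t(k)$, it is excluded by choosing $\delta$ small using uniform continuity of the flow on $[-t(k),0]$. The reparameterization can be chosen via the Busemann matching above, which is a standard reparameterization since it is increasing with $\sigma(0)=0$ after translation.
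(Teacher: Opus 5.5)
Your proposal is correct and follows essentially the paper's argument: a first-exit-time argument with Lemma \ref{lem_uniform_time} keeps the backward orbits $k$-close for all $t<0$, and a second application of the same lemma forces the distance to tend to $0$. The differences are minor. You fix an explicit reparameterization by Busemann matching at $\xi$ rather than taking the paper's infimum over reparameterizations. Also, the contradiction in your first bullet list is with the fellow-travelling on $[s_n,0]$, not with $\delta$-closeness near time $0$, which your ``main obstacle'' paragraph correctly supplies.
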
 

\begin{proof}
Let $x \in N_\rho$.  We first claim that, for any $\eps$ small enough to apply Lemma \ref{lem_uniform_time}, and $k< \eps/2$ there exists $\delta > 0$ so that 
if $y \in \cF_\delta^{v}(x)$, then there is a standard reparameterization $\sigma_0$ such that $d(\varphi^t(x), \varphi^{\sigma_0(t)}(y))< k$ for all $t<0$.  To see this, suppose for contradiction it is not the case, and take $y_n \to x$ in $\cF^v(x)$ so that no such reparameterization exists.  For fixed $y_n$ 
let  $t_n$ be the first negative time where the backwards orbits of $x$ and $y_n$ fail to $k$-fellow-travel, that is 

\[ t_n = \inf_{\sigma} \sup \{ t < 0 :  d(\varphi^s(x), \varphi^{\sigma(s)}(y_n)) < k,\  \forall s > t, s < 0\}. \]
Here the infimum is taken over all standard reparameterizations $\sigma$.  

By continuity of $\varphi$, there is a point $y'_n$ on the negative orbit of $y$ with $d(\varphi^{t_n}(x), y'_n) = k$,  and we have $t_n \to -\infty$.  But this contradicts the fact that any two points distance $k$ apart will $\eps/2$-separate in uniformly bounded time by Lemma \ref{lem_uniform_time}.  This proves the claim. 

Now we show orbits converge in the past. Suppose that $y \in \cF_\delta^{v}(x)$.  If the orbits of $x, y$ do not converge in the past, then 
then we can find some $k_0 > 0$ and $t_n \to -\infty$ such that  $d(\varphi^{t_n}(x), \varphi^{\sigma(t_n)}(y)) \geq k_0$ for any standard reparameterization, in particular for the
reparameterization $\sigma_0$ from the claim above, which has the property $d(\varphi^t(x), \varphi^{\sigma_0(t)}(y))< k$ for all $t<0$.  This implies that $k_0 < k$.  

Let $z_n = \varphi^{t_n}(x)$ and $w_n = \varphi^{\sigma_0(t_n)}(y)$. 
Rephrasing the statements above, we have $d(z_n,w_n) = k_0$, and moreover $z_n$ and $w_n$ are
in the same local $\cF^v$ leaf. In addition, 
\[d(\varphi^s(z_n), \varphi^{\sigma_0(s+t_n)-\sigma_0(t_n)}(w_n)) \ < 
 k  \ < \ \eps/2\]
for all $0 \leq s \leq |t_n|$,
and  $s \mapsto \sigma_0(s+t_n)-\sigma_0(t_n)$ is a standard reparameterization.

Since $|t_n| \to +\infty$, and $d(z_n,w_n) = k_0$ for all $n$, this
again contradicts Lemma \ref{lem_uniform_time}.  
\end{proof} 

This concludes the proof that $\varphi^t$ is topologically Anosov.    \qedhere

\begin{rem}[Free homotopy classes of periodic orbits] 
Our argument gives a negative answer to
 \cite[Question 1.9]{BBGH}.  This question asked whether the existence of a pair of periodic orbits $\alpha, \beta$ of an Anosov flow, with $\alpha$ freely homotopic to $\beta^{-1}$, implies that the weak stable and weak unstable foliations have the same dimension.  The proof of Lemma $\ref{lema-periodicorbits}$ (and minimality of $\rho$) 
 shows that fixed points of $\rho(\gamma)$ on $S^1$ alternate between preimages of $\gamma^+$ and $\gamma^-$ under $f$, so are alternating attractors and repellers.  
Thus, if $p_1, p_2$ are consecutive fixed points of $\rho(\gamma)$, the associated periodic orbits of $\varphi$ are 
freely homotopic to the inverses of each other.
This also gives an alternative proof that $\varphi$ is not a suspension, since suspension flows do not have pairs of orbits which are freely
homotopic to the inverse of each other.
\end{rem}


\subsection{Cannon-Thurston maps are finite to one}
We conclude this section with the proof of Corollary  \ref{cor-CTmaps}.  

\begin{proof}[Proof of Corollary \ref{cor-CTmaps}]
Let $\varphi^t$ be the flow constructed in Section \ref{sec_construction}.  Since it is topologically Anosov, it is expansive; meaning that, there exists $\eps>0$ such that any two points whose orbits can be reparameterized to $\eps$-fellow travel for all time, are in fact on the same orbit.   

By Remark \ref{rem_separate_forever}, orbits which are not on the same weak stable leaf for $\wt \varphi^t$ on $\wt M \times S^1$ in fact (up to descreasing $\eps$) will $\eps$ separate at some point 
and then stay $\eps$-separated for the future.  
Suppose for contradiction that some point $\xi \in \partial \wt M$ has more than $C/\eps$ preimages by $f$, where $C$ is the maximum length of a fiber $S^1$.  Let  $p_1, p_2,...p_n$ denote these preimages.

Fix some $\beta \neq f(p_i)$ and consider orbits $\alpha_i := \wt \varphi^t(x_i) \subset \wt M \times \{p_i\}$ with backwards endpoint $\beta$ in each leaf $\wt M \times \{p_i\}$.  These meet every fiber at $n$ points, so (by piegonhole) there will be some $i, j$ and a sequence of times, going to infinity, along these orbits where some fixed $\wt \varphi^t(x_i)$ and $\wt \varphi^s(x_{j})$ are in the same $S^1$ fiber and distance $< \eps$ apart, contradiction.  
\end{proof}

\section{Proof of Theorem \ref{thm_main}: Smooth statement} \label{section-smooth}

In this section we assume that $M$ has a hyperbolic structure and that the image of the non-injective points of $f$  in $\partial \wt{M}$  has zero Lebesgue measure.  Our proof could likely be adapted to the negative curvature case (using the quasiconformal rather than conformal class of visual metrics on the boundary), but we do not pursue this here.   

\subsection{Smooth structure on $N_\rho$ and metric computation.}  \label{sec_smooth_structure} 
Our first goal is to equip $N_\rho$ with a smooth structure where $\cF^h$ is $C^1$ and  show that $\varphi^t$ has uniform expansion transverse to $\cF^h$. This will be done using an idea of Tholozan  \cite{Tholozan}. 


A key ingredient is the following basic observation about transformation of visual measures in hyperbolic $n$-space, which forms the basis of Patterson--Sullivan theory (see \cite[Section 2]{Su} or \cite[Proposition 3.39]{Qu}): if $\mu_x$ denotes the visual measure on $\partial{\wt M}$ from the basepoint $x \in \wt M$, then, given $x,y \in \wt M$ and $\xi \in \partial \wt M$:
\[ \frac{d \mu_x }{d \mu_y } (\xi) = e^{-\delta{b_\xi(x,y)}} \] 
where $\delta = \dim(M) - 1 = \dim \partial(\wt M)$ and $b_\xi(\cdot, \cdot)$ is the Busemann function at $\xi$, measuring the signed distance between the horospheres based at $\xi$ containing $x$ and the horosphere containing $y$.  By convention the sign of $b_\xi(x,y)$ is positive when $y$ is closer to $\xi$ than $x$.

Fix a basepoint $o \in \wt M$. 
For an interval $I \subset S^1$, define 
\[ \nu_o(I) = \mu_o(f(I)).\] 

If $I$ is closed, then $f(I)$ is compact, so measurable. If $I$ is open, and $J$ its complement, then $f(I)$ differs from the complement of $f(J)$ which is open, by a subset of the image of the non-injectivity points, which are a null set. Thus, $f(I)$ is also (Lebesgue) measurable\footnote{In fact, if we use Corollary \ref{cor-CTmaps} which says $f$ is finite-to-one, the Lusin---Novikov theorem immediately implies that $f$ maps Borel sets to Borel sets.}. This implies that $\mu_o(f(I))$ is well defined.  Since the images of non-injective points of $f$ form a $\mu_o$-null set ($\mu_o$ is in the Lebesgue measure class), $\nu_o$ is countably additive so defines a Borel measure on $S^1$. 
One should think of $\nu_o$ as the ``pullback" of $\mu_o$ under $f$.

Since $\rho$ is minimal, the orbit of any point is infinite and dense and thus $\nu_o$ is nonatomic and  has full support. Thus, we can define a system of local coordinate charts $\zeta_{ab}$ in $S^1$ from open sets $(a,b) \subset S^1$ to 
 $\mathbb{\R}$ by $\zeta_{ab}(s) := \nu_o([a,s])$.  Also, the change-of-coordinate overlap maps are (in each connected component of the overlapping) translations of intervals in $\mathbb{R}$, so these charts give a smooth structure.  
 
 \begin{lemma}[Tholozan's trick]
 With respect to this smooth structure, the action of $\pi_1(M)$ on $S^1$ 
given by $\rho$ is by $C^1$ diffeomorphisms\footnote{In fact, Tholozan shows this can be made by $C^{1+1/2}$ diffeomorphisms \cite{Tholozan-inprep}.}. 
 \end{lemma}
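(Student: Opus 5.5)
In the charts $\zeta_{ab}$, a map $g$ of $S^1$ is $C^1$ exactly when the pushforward of $\nu_o$ under $g$ has a continuous positive density with respect to $\nu_o$. The derivative of $g$ in these coordinates is then the reciprocal of that density, read at the appropriate point. So the plan is to compute the Radon--Nikodym derivative of $\rho(\gamma)_*\nu_o$ with respect to $\nu_o$, and show it is continuous and positive. The computation reduces, through $f$ and equivariance, to the Patterson--Sullivan transformation rule stated above.

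\textbf{Step 1: reduce to a change of basepoint.} Fix $\gamma\in\pi_1(M)$ and an interval $I\subset S^1$. By equivariance, $f(\rho(\gamma)I)=\gamma f(I)$, so
\[ \nu_o(\rho(\gamma) I) = \mu_o(\gamma f(I)) = \mu_{\gamma^{-1}o}(f(I)), \]
using that $\gamma$ is an isometry, hence $\gamma_*\mu_{\gamma^{-1}o} = \mu_o$.

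\textbf{Step 2: apply the Patterson--Sullivan formula.} This gives
\[ \mu_{\gamma^{-1}o}(f(I)) = \int_{f(I)} e^{-\delta b_\xi(\gamma^{-1}o,o)}\, d\mu_o(\xi). \]
We want to pull this back to an integral over $I$ against $\nu_o$. We need
\[ \int_{f(I)} h\, d\mu_o = \int_I h\circ f\, d\nu_o \]
for continuous $h$. This holds because $f$ is injective off a set whose image is $\mu_o$-null, so $f_*\nu_o=\mu_o$ as measures: they agree on images of intervals, and a monotone class argument extends this, using the measurability discussion already given. Hence
\[ \nu_o(\rho(\gamma)I)=\int_I D_\gamma\, d\nu_o, \qquad D_\gamma(s) := e^{-\delta\, b_{f(s)}(\gamma^{-1}o,o)}. \]

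\textbf{Step 3: read off the derivative.} The Busemann function $b_\xi(x,y)$ is jointly continuous in $\xi\in\partial\wt M$, and $f$ is continuous, so $D_\gamma$ is continuous and strictly positive on $S^1$. In charts, $\rho(\gamma)$ becomes $u\mapsto \zeta_{a'b'}(\rho(\gamma)\zeta_{ab}^{-1}(u))$, which by Step 2 equals a constant plus
\[ \int_{\zeta_{ab}^{-1}(0)}^{\zeta_{ab}^{-1}(u)} D_\gamma\, d\nu_o. \]
Since $\nu_o$ pulls back to Lebesgue measure in the chart $\zeta_{ab}$, this is $\int_0^u D_\gamma\circ\zeta_{ab}^{-1}(v)\,dv$ plus a constant. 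By the fundamental theorem of calculus it is $C^1$, with derivative $D_\gamma\circ\zeta_{ab}^{-1}>0$. The inverse $\rho(\gamma^{-1})$ is handled the same way, so $\rho(\gamma)$ is a $C^1$ diffeomorphism.

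\textbf{Main obstacle.} The delicate step is the change of variables in Step 2. It requires that pushing $\nu_o$ forward by $f$ recovers exactly $\mu_o$, with no mass lost or double counted. This is precisely where the hypothesis that the image of the non-injective points is Lebesgue-null is used. I would verify it first on intervals, then extend to the Borel $\sigma$-algebra, and then to continuous integrands by uniform approximation with step functions on small intervals. Nonatomicity and full support of $\nu_o$, established above, ensure the charts are homeomorphisms, so this smoothness computation is legitimate.
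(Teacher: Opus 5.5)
Your proposal is correct and follows essentially the same route as the paper: you pull the Patterson--Sullivan cocycle back through $f$ to show that the density of $\rho(\gamma)$ with respect to $\nu_o$ is $e^{-\delta b_{f(s)}(\gamma^{-1}o,o)}$, which is continuous and positive. The differences are cosmetic: you differentiate $\rho(\gamma)$ directly, whereas the paper computes the density of $\rho(\gamma)_\ast\nu_o$, which is the derivative of $\rho(\gamma)^{-1}$; and you make explicit the change of variables via $f_\ast\nu_o=\mu_o$ and the fundamental-theorem-of-calculus step, both of which the paper leaves implicit.
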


\begin{proof}

For any measurable $A \subset S^1$ and $\gamma \in \pi_1(M)$, we have 
\[\rho(\gamma)_\ast \nu_o(A) = \nu_o(\rho(\gamma)^{-1}(A)) = \mu_o(\gamma^{-1} f(A)) =\mu_{\gamma o}(f(A)) = \nu_{\gamma o}(A).\] 

Let $h(\xi) = \frac{d\mu_{\gamma_o}}{d\mu_o}(\xi)$.
We claim that 
\[\frac{d\nu_{\gamma o}}{d\nu_o}(q) = h(f(q))\]
To prove that we compute:
\[\nu_{\gamma o}(A) \ = \ \mu_{\gamma o}(f(A)) \ = \ 
\int_{f(A)} h(\xi) d\mu_o(\xi) \ = \ \int_A h(f(q)) d\nu_o(q).\]
Therefore 
\[\frac{d\nu_{\gamma o}}{d\nu_o}(q) \ = \ h(f(q)) \ = \ 
\frac{d\mu_{\gamma o}}{d\mu_o}(f(q))\]
as desired. As a consequence
\begin{equation}  \label{eq_change-of-coord}
\frac{d \rho(\gamma)_\ast \nu_{o}} {d \nu_o } (p)=  \frac{d \mu_{\gamma o}}{d \mu_o }( f(p)) = e^{-\delta{b_{f(p)}(\gamma(o), o)}}
\end{equation} 

Since the family of Busemann functions $b_{\xi}$ is continuous in $\xi$, and $f$ is continuous, the Radon Nikodyn derivative $\frac{d \rho(\gamma)_\ast \nu_{o}} {d \nu_o } $ is continuous.  But this is just the derivative of the transformation $\gamma$ with respect to our smooth structure.  
\end{proof}

We now equip $\wt{M} \times S^1$ with the product smooth structure given by the standard structure on $\wt{M}$, and the structure just constructed on the $S^1$ factor.  This descends to give a $C^1$ structure on $N_\rho$, for which the horizontal foliation is transversely $C^1$.   In particular, for a path $c: [0,t] \to \wt{M} \times \{p\}$, the transverse holonomy 
along $c$ defines a $C^1$ function
from $c(0) \times S^1$ to $c(t) \times S^1$.

\begin{notation}
For $x \in \wt{M} \times S^1$, we denote by $\hol^{\varphi}_{x,t}$ the transverse holonomy of the foliation
$\cF^h$ along the path $s \mapsto \varphi^s(x)$, for $s \in [0, t]$,
computed from $\varphi^0(x) \times S^1$ to $\varphi^t(x) \times S^1$.
\end{notation} 

Our next goal is to show that this holonomy is uniformly expansive.  To do this, 
define a ``piecewise" Riemannian metric on $\wt{M} \times S^1$ as follows:  Fix a fundamental domain $D$ for the action of $\pi_1(M)$ on $\wt M$ with compact closure, containing the basepoint $x$ used to define the smooth structure.   We declare the factors $\wt{M}$ and $S^1$ to be orthogonal everywhere, and put the hyperbolic metric along each horizontal slice $\wt{M} \times \{p\}$.   Finally, along each $S^1$ fiber over $\gamma D$, for $\gamma \in \pi_1(M)$, we measure length in the coordinate given by $\rho(\gamma)_\ast \nu_o$, so the length of the interval $[a,b] \subset S^1$ in a fiber above a point of $\gamma D$ is  $\rho(\gamma)_\ast \nu_o[a, b] = \nu_o(\rho(\gamma^{-1})([a,b]))$.   

Although this ``metric" is not continuous, it defines a (discontinuous) length $|| \cdot ||$ for vectors in $T(\wt{M} \times S^1)$, which is
 invariant under the diagonal action of $\pi_1(M)$ on $\wt M \times S^1$, so descends to a discontinuous  
 metric on $N_\rho$.  Moreover, this metric $|| \cdot ||$ has the property that for any Riemannian metric $g$ on $N_\rho$, there exists $k= k(g) >1$ such that 
\[  1/k || v ||_g \leq ||v || \leq k || v ||_g \] 
for all $v \in T N_\rho$.  This can be seen by lifting $g$ to a metric $\wt g$ on $\wt{M}$:  since $D$ has compact closure and the discontinuous metric in the $S^1$ fibers is constant in $D$, it follows that there is a constant $k$ such that $1/k || v ||_{\wt g} \leq ||v || \leq k || v ||_{\wt g}$ holds for all $v$ in the tangent bundle over $D$, and this estimate descends to $N_\rho$.

Using this, we verify the following: 
\begin{lemma}[Transverse contraction and expansion] \label{lem_transverse_expansion}
Let $X$ be the generating vector field for $\varphi^t$, and let $E^h \subset TN_\rho$ denote the tangent bundle of the horizontal foliation $\cF^h$. This splits as a direct sum $E^h = E^s \oplus X$ which is invariant under the flow.

Furthermore, for any Riemannian metric $g$ on $N_\rho$ there exists $k_1, k_2, k_1', k_2' > 0$ such that: 
\begin{equation} \label{eq_contraction}
\begin{split}
 & || \varphi^t_\ast(v) ||_g < k'_1 e^{-tk'_2} || v ||_g \text{ for all } v \in E^s, t>0, \text{ and } \\
 & || \mathrm{D} \hol^{\varphi}_{y,t}(v)||_g  > k_1 e^{tk_2} ||v||_g \text{ for all } y \in N_\rho, t>0.  
  \end{split}
\end{equation}    
\noindent
where in the second formula $v$ is tangent to the circle foliation in $N_\rho$.

\end{lemma}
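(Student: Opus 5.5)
The plan is to work in $\wt M \times S^1$ with the product structure, and to handle the horizontal and transverse directions separately, using the discontinuous metric $\|\cdot\|$ for the transverse estimate.

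\textbf{Horizontal directions.} Along each leaf $\wt M\times\{p\}$ the flow is unit-speed geodesic flow toward the fixed ideal point $f(p)$. I take $E^s$ to be the tangent space of the horosphere centered at $f(p)$ through the point, that is, the subspace of $E^h$ orthogonal to $X$. The time-$t$ map along the leaf moves horospheres centered at $f(p)$ to horospheres, so it preserves this subspace. In $\mathbb{H}^n$, the map sending a point of a horosphere to the point at distance $t$ along the geodesic toward the center contracts the horospherical metric by exactly $e^{-t}$. This gives the first inequality for the leafwise hyperbolic metric, with $k_2'=1$. Since the splitting $E^s\oplus X$ is leafwise smooth and invariant under $\pi_1(M)$, it descends to $N_\rho$. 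Comparing the hyperbolic metric on horizontal vectors with $g$ (via the bound $1/k\le \|\cdot\|/\|\cdot\|_g\le k$) gives $k_1'=k^2$.

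\textbf{Transverse direction.} Lift to $\wt M\times S^1$. The transverse holonomy of $\cF^h$ along any horizontal path is the identity in the $S^1$ coordinate, because the leaves are $\wt M\times\{p\}$. So $\hol^\varphi_{y,t}$ is the identity map from the fiber over $x_0=\pi(y)$ to the fiber over $x_t=\pi(\varphi^t y)$. Its derivative at $p$, measured in $\|\cdot\|$, is the ratio of the densities of $\rho(\gamma_t)_*\nu_o$ and $\rho(\gamma_0)_*\nu_o$ at $p$, where $x_0\in\gamma_0 D$ and $x_t\in\gamma_t D$. By the computation in Tholozan's trick (equation \eqref{eq_change-of-coord}), $\rho(\gamma)_*\nu_o$ has density $e^{-\delta b_{f(p)}(\gamma o,o)}$ with respect to $\nu_o$. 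Hence the derivative equals
\[ e^{-\delta( b_{f(p)}(\gamma_t o,o)-b_{f(p)}(\gamma_0 o,o))} = e^{\delta\, b_{f(p)}(\gamma_0 o,\gamma_t o)} \]
by the cocycle property of Busemann functions. Since $\gamma_i o$ lies within $\diam(D)$ of $x_i$, and Busemann functions are $1$-Lipschitz, we get $b_{f(p)}(\gamma_0 o,\gamma_t o)\ge b_{f(p)}(x_0,x_t)-2\diam(D)$. The geodesic from $x_0$ to $x_t$ points straight at $f(p)$, so $b_{f(p)}(x_0,x_t)=t$ exactly.

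Putting these together, the derivative is at least $e^{-2\delta\diam(D)}e^{\delta t}$ in the metric $\|\cdot\|$. Converting to $g$ costs a factor $k^{-2}$, so $k_2=\delta$ and $k_1=k^{-2}e^{-2\delta \diam D}$.

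The main obstacle is getting the sign conventions right: the Radon--Nikodym direction, and the sign of $b_\xi$ (positive when the second point is closer to $\xi$). I will check them on the surface example, where the answer must be expansion. A secondary point is that the derivative is only defined almost everywhere a priori. The Lemma (Tholozan's trick) shows these densities are continuous, so the holonomy is genuinely $C^1$ with that derivative.
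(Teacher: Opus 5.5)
Your proposal is correct and follows the paper's argument. Both use the same Radon--Nikodym/Busemann computation in the piecewise metric $\|\cdot\|$, with the same $2\diam(D)$ error term and the same comparison constant with $g$. The only difference is cosmetic: you bound the holonomy derivative from below directly for $t>0$ via the cocycle identity, whereas the paper bounds it from above for $t<0$ and then inverts using $\hol^{\varphi}_{\varphi^t(y),-t}\circ\hol^{\varphi}_{y,t}=\mathrm{id}$.
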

Note that, since $\varphi$ is smooth along $\cF^h$ leaves, $\varphi^t_\ast(v)$ makes sense. 

\begin{proof}  
The fact that $E^h$ splits as $E^s \oplus X$ is because $X$ restricts to a (hyperbolic) flow along geodesics with common forward endpoint on each leaf.  
The estimate for $E^s$ now follows from the fact that, in the leafwise hyperbolic metric, lengths are contracted exponentially; and any other metric differs by a constant.  

To estimate the derivative of the holonomy, we first do a computation in the piecewise Riemannian metric $|| \cdot ||$ on $\wt M \times S^1$ defined above.   Fix a horizontal leaf $\wt M \times p$ and suppose that  $\alpha, \beta \in \pi_1(M)$ are the elements such that $y \in \alpha(D) \times\{p\}$ and $\wt \varphi^t(y) \in \beta(D) \times \{p\}$, for some $t<0$.  
Let $v$ be a tangent vector to the $S^1$ fiber at $y$.  The definition of the metric and 
the Radon Nikodym derivative gives
\[ || \mathrm{D} \hol^{\varphi}_{y,t} (v)|| = \frac{d \beta_\ast \nu_{o}} {d \alpha_\ast \nu_o }(p) ||v|| = e^{-\delta b_{f(p)}(\beta(o), \alpha(o))} ||v||  \]
where the last equality follows from Equation \eqref{eq_change-of-coord}. 

The distance in $\wt M$ between $\alpha(o)$ and $y$ is bounded above by $\diam(D)$, as is the distance between $\beta(o)$ and $\wt\varphi^t(y)$.  
Since $y$ and $\wt\varphi^t(y)$ lie along a geodesic with endpoint $f(p)$, we also have that the horosphere based at $f(p)$ and containing $\alpha(o)$ is distance at most $\diam(D)$ from $y$; and have the same distance bound between the horospheres of $\beta(o)$ and $\varphi^t(y)$.  Thus, 
\[ b_{f(p)}(\beta(o), \alpha(o)) \geq b_{f(p)}(\wt\varphi^t(y),y)- 2\diam(D).\]  

Since $\varphi^t$ is unit speed in the hyperbolic metric, and on the leaf containing $y$ it is along geodesics with positive endpoint $f(p)$, we have  $b_{f(p)}(\wt\varphi^t(y),y) = -t$.  Recall that $t$ was chosen negative, so $b_{f(p)}(\wt \varphi^t(y), y)$ is positive. 
Therefore
\[-b_{f(p)}(\beta(o),\alpha(o)) \ \leq \ -b_{f(p)}(\wt \varphi^t(y),y) + 2\diam(D) \ = \
t + 2\diam(D),\]
which implies that for negative times we have \
\[||\mathrm{D} \hol^{\varphi}_{y,t}(v)|| \ \leq \ e^{2\delta \diam(D)} e^{\delta t}||v||.\]
Using that for every $y,t$ we have $\hol^{\varphi}_{\varphi^t(y), -t} \circ \hol^{\varphi}_{y,t}= \mathrm{id}$, and modifying the constants above so strict inequality holds, we obtain the inequality in \eqref{eq_contraction} for $|| \cdot ||$.  

Since, for any $v \in TM$ we have a constant $k>0$ such that $1/k || v || \leq  || v ||_g \leq k || v ||$, the desired inequality for a general metric $|| \cdot ||_g$ follows.  
\end{proof}

\subsection{Smooth Anosov perturbation} 
As before, we let $X$ denote the vector field generating $\varphi^t$ on $N_\rho$.  While $X$ is smooth along leaves of $\cF^h$, it varies only continuously in the transverse direction.   Following a strategy of Asaoka (see \cite[pages 457-458]{Asaoka}), we show the following. 

\begin{lemma} 
There exists a sequence of $C^1$ vector fields $X_k$ with the following properties: 
\begin{itemize}
\item $X_k$ is tangent to $\cF^h$
\item As $k \to \infty$, $X_k$ approaches $X$ in the $C^0$ topology
\item For any leaf $L$ of $\cF^h$, the restriction of $X_k$ to $L$ approaches the restriction of $X$ to $L$ in the $C^1$ topology\footnote{In fact, one has convergence in the $C^r$ topology for any $r$, but this is not needed.}.
\end{itemize} 
\end{lemma}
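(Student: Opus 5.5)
The plan is to smooth $X$ in the transverse (circle) direction only, by convolution in the $S^1$-coordinate, and to do it in local foliation charts that respect $\cF^h$; the pieces are then glued with a partition of unity. Concretely, cover $N_\rho$ by finitely many foliated charts $U_i \cong B_i \times I_i$, where $B_i$ is a ball in $\wt M$ (hyperbolic coordinates) and $I_i$ is an interval in the $C^1$ structure on $S^1$ built from $\nu_o$. In such a chart the leaves of $\cF^h$ are the slices $B_i \times \{s\}$, and $X$ is a map $X(x,s) \in T_xB_i$. It is smooth in $x$ with all $x$-derivatives continuous in $(x,s)$, because it is the unit tangent field to geodesics pointing toward $f(s)$, and $f$ is continuous. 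Choose a partition of unity $\{\psi_i\}$ subordinate to the cover and $C^1$ with respect to the $C^1$ structure on $N_\rho$. Define
\[ X_k^{(i)}(x,s) = \int_{\R} \eta_k(s-u)\, X(x,u)\, du, \]
where $\eta_k$ is a smooth mollifier of width $1/k$. This makes sense once we shrink the charts slightly so that the $1/k$-neighborhoods stay inside. Then set $X_k = \sum_i \psi_i X_k^{(i)}$.

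The properties are checked one at a time. \emph{Tangency to $\cF^h$}: each $X_k^{(i)}(x,s)$ is an average of vectors in $T_xB_i$, which is the horizontal tangent space, so it is horizontal. Multiplying by the scalars $\psi_i$ and summing keeps it horizontal. \emph{$C^1$ regularity}: in $s$, the convolution with a smooth kernel is smooth. In $x$, we differentiate under the integral, using that $D_xX$ is continuous in $(x,u)$. Combined with the $C^1$ partition of unity, this makes $X_k$ a $C^1$ vector field for the given structure. \emph{$C^0$ convergence}: $X$ is uniformly continuous on the compact chart closures, so $X_k^{(i)} \to X$ uniformly. Since $\sum_i \psi_i = 1$, it follows that $X_k \to X$ in $C^0$.

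For leafwise $C^1$ convergence, fix a leaf $L$ and work in a plaque $B_i \times \{s\}$. We have $D_x X_k^{(i)}(x,s) = \int \eta_k(s-u)\, D_xX(x,u)\,du$, which converges uniformly to $D_xX(x,s)$ by uniform continuity of $D_xX$ on compact sets. The terms involving $D_x\psi_i$ multiply $X_k^{(i)} - X$, and that difference tends to $0$ uniformly; note that $\sum_i D_x\psi_i = 0$. Hence $X_k|_L \to X|_L$ in $C^1$, uniformly over leaves.

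The main obstacle is justifying that the leafwise derivative $D_xX$, and more precisely the whole leafwise $1$-jet, is jointly continuous in the transverse parameter. This is what makes the $x$-derivatives of the mollified field converge, and makes them continuous as functions on $N_\rho$. I would argue it from the explicit formula for the field of unit tangents to geodesics toward $\xi \in \partial\mathbb{H}^n$. That field is $-\nabla b_\xi$, where $b_\xi$ is the Busemann function, and it depends real-analytically on $(x,\xi)$ with all $x$-derivatives continuous in $\xi$. Composing with the continuous map $f$ then gives joint continuity in $(x,s)$. A secondary point is compatibility with the gluing: the charts must be $C^1$ charts of the structure of Section \ref{sec_smooth_structure}. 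This holds because the transition maps are products of hyperbolic isometries with $C^1$ maps of $S^1$, by Tholozan's trick.
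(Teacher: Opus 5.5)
Your proposal is correct and follows essentially the same route as the paper. Both arguments mollify $X$ only in the transverse coordinate of finitely many foliated product charts and glue the results with a partition of unity. The paper mollifies $\eta_j X$ and sums, while you mollify $X$ and then multiply by $\psi_i$; this difference is inessential. Your justification of the joint continuity of the leafwise $1$-jet via $-\nabla b_{f(s)}$, and of the $C^1$ compatibility of the charts, makes explicit points the paper leaves implicit.
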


\begin{proof}
Let $D^n = (0,1)^n$ and fix a finite system of smooth local product coordinate charts $U_j  \to D^n \times (0,1) \subset \bR^{n+1}$ so that the images of local leaves of $\cF^h$ are the horizontal sets $D^n \times \{y\}$.  Let $\{ \eta_j \}$ be a partition of unity subordinate to the cover $\{U_j : j \in J\}$ indexed by a finite set. 

Fix a local chart $U_j$.  
Recall $X$, and hence $\eta_j X$ also, is tangent to $\cF^h$.  Thus, written in local coordinates in this chart, $\eta_j X$ has the form $\sum_{i=1}^{n} a^i(x,y) \frac{\partial}{ \partial x_i}$, where the functions $a^i : (D^n \times (0,1)) \to \R$ are continuous, and are smooth in the $x$-variable.   We will perturb these to become $C^1$ also in the $y$ variable, using the standard technique of convolving with a mollifier, described below. 

Let $\zeta: \mathbb{\R} \to [0,\infty)$ be a smooth bump function centered at $0$ with $\int_{\mathbb{R}} \zeta = 1$.  Thus, 
 $k \zeta(kt)$ converges, in the sense of distributions, to the Dirac delta function. 
 Writing $a = (a^1, \ldots, a^n)$, define 
\[ b_k(x, y) : = \int_{\bR} a(x,y-t) k \zeta(kt)  \, dt. \]

 One can verify that
$b_k(x,y) \to a(x,y)$ as $k \to \infty$, and it is a standard exercise to show that 
\[ \tfrac{\partial}{\partial y} b_k(x,y) = \int_{\bR} a(x,y-t) k \tfrac{\partial}{\partial t} \zeta(kt)  \, dt. \]
which is defined and continuous for all $k$.   
Finally, note that the other partial derivatives pass under the integral, so 
$ \tfrac{\partial}{\partial x_i }b_k(x, y)  = \int_{\bR} \tfrac{\partial}{\partial x_i } a(x,y-t) k \zeta(kt)  \, dt$, 
which converges to $\tfrac{\partial}{\partial x_i } a(x,y)$ as $k \to \infty$.  

Let  $X_{U_j, k}$ be the vector field on $U_j$ whose local coordinate expression is given by performing the above process to $\eta_j X$.  
Finally, we let $X_k := \sum_{j\in J} X_{U_j, k}$.  
Since $\{\eta_j\}$ is smooth and the coordinate chart system finite, the chart-wise convergence given above shows that $X_k \to X$, and that 
$X_k$ converges along leaves to $X$ in the $C^1$ sense.  
\end{proof}

\begin{lemma}  \label{lem_still_hyperbolic_on_leaves}
For $k$ sufficiently large, the restriction of the flow generated by $X_k$ to each leaf $\wt M \times \{p\}$ is forward contracting,   has orbits which are uniform (over all leaves) quasigeodesics with forward endpoint $f(p)$, and for each $\xi \neq f(p) \in \partial M$, there is a unique orbit on $\wt M \times \{p\}$ with negative endpoint $\xi$.  
\end{lemma}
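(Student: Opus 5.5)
The plan is to work leaf by leaf in $\wt M \times \{p\}$. There the lift of $X_k$ is a $C^1$-small perturbation, uniformly over all leaves, of the unit-speed geodesic vector field $X$ pointing to $f(p)$. By the previous lemma, $X_k \to X$ in $C^0$ on $N_\rho$ and in $C^1$ along leaves. Since $N_\rho$ is compact and the leafwise metric is the lifted hyperbolic one, the convergence is uniform in the leafwise $C^1$ norm over all leaves simultaneously. So for $k$ large, $\|X_k - X\|_{C^1(L)} < \eta$ for every leaf $L$, with $\eta$ as small as we like. All three claims should then follow from structural stability of the hyperbolic flow $\psi^t_p$ "geodesics toward $f(p)$" on $\mathbb{H}^n$, with constants independent of $p$, because all these flows are isometric to one another.

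First I would record the hyperbolic structure of $\psi^t_p$. Use upper half-space coordinates with $f(p)=\infty$. The flow is $(z,h)\mapsto(z,he^{t})$. It contracts the horospheres $\{h=\mathrm{const}\}$ exponentially in forward time, the Busemann function $b=-\log h$ decreases at unit rate, and the orbits are the vertical lines, one per $\xi\ne f(p)$.

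Quasigeodesic and forward contracting property. Since $\|X_k - X\|_{C^0}<\eta$, we get $\frac{d}{dt}\,b_{f(p)}(\psi^t_{k}(x))\le -1+\eta$. Moreover the component of $X_k$ tangent to horospheres has hyperbolic length at most $\eta$. A path that decreases the Busemann function at speed at least $1-\eta$ while drifting at most $\eta$ per unit time horizontally, measured at the current height, is a uniform quasigeodesic converging to $f(p)$. The argument: in the half-space model the horizontal Euclidean displacement is at most $\int \eta h\,dt$, which is geometrically controlled in forward time. In backward time the height shrinks, so the horizontal drift is bounded by a constant multiple of the height. This gives a cone condition, and the backward orbit converges to a single point $\xi\in\partial\wt M\setminus\{f(p)\}$, tracking the vertical geodesic above $\xi$ within bounded distance. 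Forward contraction follows from the $C^1$ closeness: $D X_k$ restricted to horospherical directions is within $\eta$ of $D X$, which is $-\mathrm{id}$ on $E^s$. So the cone field around $E^s$, meaning the tangent spaces of horospheres, is invariant and uniformly contracted.

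Uniqueness and existence of the orbit with backward endpoint $\xi$. For existence I would use a degree or shadowing argument. On the horosphere $H_s=\{b=s\}$, map each point to the backward endpoint of its $X_k$-orbit. The cone estimate shows that this map is within bounded hyperbolic distance of vertical projection, scaled to $H_s$. As $s\to-\infty$, these maps converge to a map $\partial\wt M\setminus\{f(p)\}\to$ itself which is a bounded perturbation of the identity in the Euclidean sense, hence surjective. For uniqueness: two orbits with the same backward endpoint stay at bounded distance in backward time, by the quasigeodesic property and Morse stability. But $C^1$-closeness gives uniform backward expansion in horospherical directions, since the inverse of the contraction is an expansion. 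So two distinct orbits must diverge in backward time, contradicting bounded distance. I expect the main obstacle to be this last step, getting the backward divergence globally rather than infinitesimally. I would handle it by integrating the cone-field estimate along the horospherical segment joining the two orbits, and noting that the segment's image stays in the cone.
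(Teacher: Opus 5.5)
Your route is the paper's: a leafwise $C^1$-small perturbation of the geodesic field toward $f(p)$, uniform over leaves by compactness, a cone field around the horospherical bundle $E^s$ producing a contracted bundle $E^s_\psi$, and backward expansion for uniqueness. One correction: as in the paper, the cone around $E^s$ is invariant under the \emph{backward} flow, not the forward flow. Your half-space drift estimate is a more explicit replacement for the paper's ``small geodesic curvature'' remark. It in fact shows that every orbit stays within $O(\eta)$ of the vertical geodesic over its backward endpoint.

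Your degree argument also supplies existence of an orbit with each backward endpoint, which the paper's proof does not address. You do not need any limit in $s$ there, and with $b=-\log h$, $s\to-\infty$ goes toward $f(p)$, the wrong way. Work on a single horosphere $H_s\cong\mathbb{R}^{n-1}$. The backward-endpoint map there is continuous, by finite-time continuous dependence plus your tail estimate. It is also within Euclidean distance $C\eta h_s$ of the identity, hence onto. Every orbit meets $H_s$ because $b$ is strictly monotone along orbits.

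The step that does not close as you describe it is the global divergence in the uniqueness argument. Push a horospherical segment $\sigma\subset H_s$ joining the two orbits backward. The cone estimate shows $\psi^{-t}(\sigma)$ stays tangent to the cone and has length at least $(\alpha')^t\ell(\sigma)$. But a lower bound on the length of a curve gives no lower bound on the distance between its endpoints. The cone condition only controls the $X$-component, and since $\dim E^s=n-1\ge 2$ in the case of interest, the image curve can fold within horospherical directions.

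Run the integration the other way, where length does bound endpoint distance. Because each orbit crosses each horosphere exactly once, the flow holonomy $P_{s',s}:H_{s'}\to H_s$ is a $C^1$ diffeomorphism; take $s'>s$, so this goes forward from the far-back level. To compute $DP_{s',s}$ on a horospherical vector:
\begin{itemize}
\item take its $E^s_\psi$-component, which is comparable in size to the vector since the angle between $E^s_\psi$ and $X_k$ is uniformly bounded below;
\item apply $D\psi^\tau$, where $\tau\approx s'-s$ is the hitting time;
\item project along $X_k$ onto $TH_s$.
\end{itemize}
Hence $\|DP_{s',s}\|\le C\theta^{s'-s}$ for some $\theta<1$.

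By your tracking estimate, two orbits $o_1,o_2$ with the same backward endpoint meet every $H_{s'}$ at points joined by a horospherical segment of length at most $C'\eta$. The image of that segment under $P_{s',s}$ joins the fixed points $o_1\cap H_s$ and $o_2\cap H_s$, and its length is at most $CC'\eta\,\theta^{s'-s}\to 0$ as $s'\to\infty$. Therefore $o_1=o_2$. Equivalently, the backward holonomy is an expanding diffeomorphism onto a complete flat horosphere, so its inverse is a contraction when you integrate along straight segments in the target.
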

In the statement, we have slightly abused notation to let $X_k$ also denote the lift of the vector field to $\wt M \times S^1$; in the proof we keep this notation use $\varphi^t$ and $\psi^t$ (see below) for the lifted flows, to reduce clutter.

\begin{proof} 
This is a standard cone-field style argument (see also  \cite[Lemma 4.4]{Asaoka}), but it will 
be restricted to leaves of $\cF^h$ and tangent vectors to such leaves.  

Since $X$ is leafwise hyperbolic, there is a cone field\footnote{As is standard, we denote  $C_\beta(E, F) := \{ u+w : u\in E, w \in F, ||w||<\beta||u||\}$, for $\beta>0$.}  $C_\beta(E^s_x, X_x)$,  around
$E^s_x$ ($E^s$ is the vector bundle tangent to horospheres
centered at $f(p)$) and $\alpha > 1$, $\lambda \in (0,1)$ so that for all $t>0$ we have: 
\begin{align*}
& \varphi^{-t}_\ast(\overline{C_\beta (E^s_x, X_x)}) \ \subset \ 
C_{\lambda^{t} \beta}(E^s_{\varphi^{-t}(x)}, X_{\varphi^{-t}(x)}) \text{ and } \\
& || \varphi^{-t}_\ast (v)|| > \alpha^t ||v|| \text{ for all } v \in C_\beta(E^s_x, X_x). 
\end{align*}
See \cite[Proposition 5.1.7]{FisherHasselblatt}.
If $X_k$ is sufficiently $C^1$ close to $X$ (along $\cF^h$ leaves), and $\psi^t$ the flow generated by $X_k$, there
are $\alpha', \lambda'$ with $1 < \alpha' < \alpha$, $\lambda < \lambda' < 1$ such that: 
\[D\psi^{-t}(\overline{C_\beta (E^s_x, X_x)}) \ \subset \ C_{(\lambda')^t \beta}(E^s_{\psi^{-t}(x)}, X_{\psi^{-t}(x)})\]
and $|| D\varphi^{-t} (v)|| > (\alpha')^t ||v||$ for $v \in C_\beta(E^s_x, X_x)$ and $t \geq 0$.
This shows that $\psi^t$ is hyperbolic, has invariant strong stable distribution given by 
\[ E^s_{\psi}(x) = \bigcap_{t>0} D\psi^{-t}(\overline{C_\beta(E^s_{\psi^{t}(x)}, X_{\psi^{t}(x)})}).\]

Since the vector field of $\psi^t$ is $C^1$ close to that of $\varphi^t$ along
leaves of $\cF^h$, it follows that the $\psi^t$ flow lines are quasigeodesics in leaves
of $\cF^h$ with very small geodesic curvature. Since they remain transverse to horocycles based at $f(p)$ they have the same forward endpoint $f(p)$ (this property in fact only requires that $X_k$ is $C^0$ close to $X$ along leaves).  Finally, the uniform backward expansion along the strong stable distribution for $X_k$ implies that distinct orbits have distinct negative endpoints.   
\end{proof}

By \cite[Lemma 4.4]{Asaoka}, 
all these approximating vector fields produce orbit equivalent flows.  Alternatively, one can produce an explicit orbit equivalence by (leafwise) straightening these quasigeodesic orbits by orthogonally projecting each orbit to the corresponding geodesic with the same endpoints. 
Going forward, we fix $k$ such that Lemma \ref{lem_still_hyperbolic_on_leaves} holds, let $Y = X_k$, and let $\psi^t$ denote the flow of $Y$.

Let $E^h \subset TN_\rho$ denote the tangent bundle of $\cF^h$ and let $U \subset TN_\rho$ denote the tangent bundle of the $S^1$ fibers.  
By Lemma \ref{lem_still_hyperbolic_on_leaves}, there is a $D\psi_t$-invariant splitting $E^h = E^s_\psi \oplus \R Y$, where $E^s_\psi$ is uniformly contracted by $\psi^t$. Denote by $\pi_U: TN_\rho \to U$ the projection given by the splitting $TN_\rho= U \oplus E^h$.

\begin{lemma}  \label{lem_contraction_for_psi}
There exist $c_1, c_2 > 0$ such that for every $v \in U$ and $t > 0$ one has
\begin{equation}\label{eq:derivativeU}
\| \pi_U \circ D\psi^t v \|_g > c_1 e^{t c_2} \|v\|_g. 
\end{equation}
\end{lemma}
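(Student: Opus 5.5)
The idea is to identify $\pi_U \circ D\psi^t$ on $U$ with the derivative of the transverse holonomy of $\cF^h$ along $\psi$-orbits. Then the estimate reduces to Lemma \ref{lem_transverse_expansion} applied to $\varphi$, after comparing the holonomy along $\psi$-orbits with the holonomy along $\varphi$-orbits.

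\emph{Step 1 (identification).} Since $Y$ is tangent to $\cF^h$, the flow $\psi^t$ preserves $\cF^h$ and maps leaves to leaves. The splitting $TN_\rho = U\oplus E^h$ has $E^h$ invariant under $D\psi^t$. For $v\in U_x$, the projection $\pi_U D\psi^t v$ is therefore the derivative at $x$ of the map from the fiber through $x$ to the fiber through $\psi^t(x)$ that sends $z$ to $\cF^h(z)\cap(\text{fiber through }\psi^t(x))$. This map is exactly $\hol_{x,t}^{\psi}$, the transverse holonomy of $\cF^h$ along the path $s\mapsto \psi^s(x)$, $s\in[0,t]$. Transverse holonomy depends only on the homotopy class, rel endpoints, of the path inside the leaf. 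Working in the lift $\wt M\times\{p\}$, this means it depends only on the endpoints $\wt x$ and $\wt\psi^t(\wt x)$. In the product $\wt M\times S^1$ that holonomy is the identity on the $S^1$ coordinate, and it becomes nontrivial only through the $\pi_1$-equivariant identifications. So $\|\pi_U D\psi^t v\|$ in the piecewise metric $\|\cdot\|$ is computed exactly as in the proof of Lemma \ref{lem_transverse_expansion}:
\[ \|\pi_U D\psi^t v\| = e^{-\delta b_{f(p)}(\beta(o),\alpha(o))}\|v\|, \]
where $\wt x\in \alpha D$ and $\wt\psi^t(\wt x)\in\beta D$.

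\emph{Step 2 (Busemann estimate).} As in Lemma \ref{lem_transverse_expansion}, $b_{f(p)}(\beta(o),\alpha(o))$ differs from $b_{f(p)}(\wt\psi^t(\wt x),\wt x)$ by at most $2\diam(D)$. The key input is then a linear decrease of the Busemann function along $\psi$-orbits: there should be $c>0$ with
\[ b_{f(p)}(\wt\psi^t(\wt x),\wt x)\le -ct \quad\text{for all } t>0. \]
This holds because $Y$ is $C^0$-close to $X$ and $X$ is the unit gradient of $-b_{f(p)}$ along the leaf. Hence $\langle Y, \nabla b_{f(p)}(\cdot,\wt x)\rangle \ge 1-\eta$, with $\eta$ the $C^0$ distance, uniformly over leaves by compactness of $N_\rho$. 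Integrating along the orbit gives the bound with $c = 1-\eta > 0$. This is the same transversality to horospheres already used in Lemma \ref{lem_still_hyperbolic_on_leaves}.

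\emph{Step 3 (conclusion).} Combining Steps 1 and 2 gives
\[ \|\pi_U D\psi^t v\|\ge e^{-2\delta\diam(D)}e^{\delta c t}\|v\| \]
in the piecewise metric $\|\cdot\|$. The uniform comparison $1/k\|\cdot\|_g\le\|\cdot\|\le k\|\cdot\|_g$ established before Lemma \ref{lem_transverse_expansion} then yields \eqref{eq:derivativeU}, with $c_2=\delta c$ and $c_1=k^{-2}e^{-2\delta\diam(D)}$ (adjusted slightly to make the inequality strict).

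I expect the main obstacle to be Step 1: justifying carefully that $\pi_U\circ D\psi^t$ restricted to $U$ coincides with the derivative of the leafwise holonomy. This requires the $C^1$ structure on $N_\rho$, with $\cF^h$ transversely $C^1$, and the fact that $Y$ is $C^1$ and tangent to $\cF^h$, so that $\psi^t$ is $C^1$ and maps fibers to curves transverse to $\cF^h$.
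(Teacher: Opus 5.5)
Your proof is correct, but it takes a more direct route than the paper's.

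\textbf{The paper's route.} The paper does not redo the Radon--Nikodym computation along $\psi$-orbits. It uses the uniform quasigeodesic conclusion of Lemma \ref{lem_still_hyperbolic_on_leaves}: for each $t>0$ there is a time $s\in[t/c,ct]$ with $d(\wt\varphi^s(y),\wt\psi^t(y))<R$. It then factors the $\psi$-holonomy into two pieces: the $\varphi$-holonomy from $y$ to $\wt\varphi^s(y)$, followed by the holonomy between two points of the same leaf at distance $<R$. The second piece is uniformly bounded below by compactness, and the first is handled by invoking Lemma \ref{lem_transverse_expansion} as a black box.

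\textbf{Your route.} You observe that the Busemann/Radon--Nikodym computation in the proof of Lemma \ref{lem_transverse_expansion} uses only the endpoints of the leafwise path. It therefore applies verbatim to $\psi$-orbits, since the lifted foliation is a product. The only dynamical input you then need is the linear decrease
\[ b_{f(p)}(\wt\psi^t(\wt x),\wt x)\le -ct . \]
You obtain this from $C^0$-closeness of $Y$ to $X$, using that $g$ and the leafwise hyperbolic metric are uniformly comparable on $E^h$.

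\textbf{What each buys.} Your version is self-contained, gives explicit constants, and shows that only transversality of $Y$ to horospheres (i.e.\ $C^0$-closeness) matters for this lemma. The paper's version is more modular: it reduces everything to the estimate already proved for $\varphi$ plus a shadowing statement. Both rest on the same fact, namely that holonomy of $\cF^h$ in $\wt M\times S^1$ depends only on the endpoints of the path.

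\textbf{A sign slip.} Along the leaf $X=-\nabla b_{f(p)}(\cdot,\wt x)$, so the inequality you want is $\langle Y,-\nabla b_{f(p)}(\cdot,\wt x)\rangle\ge 1-\eta$, with $\eta<1$. You wrote it without the minus sign. Your conclusion $b_{f(p)}(\wt\psi^t(\wt x),\wt x)\le -(1-\eta)t$ is nonetheless the correct one.
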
 

\begin{proof} 

Note first that $\pi_U \circ D\psi^t$ at a point $y \in N_\rho$ is actually the derivative of the holonomy of $\cF^h$ from $y$ to $\psi^t(y)$ measured in the fiber direction (what one would notate $D_y \mathrm{hol}^\psi_{y,t}$, adapting the notation from before).  We prove the Lemma by controlling this in terms of the derivative of holonomy of $\varphi$. 

Because orbits are uniform quasigeodesics, there exists $c>1$ and $R>0$ such that for any $y \in \wt{M} \times \{p\}$ and any $t>0$, there exists $s \in [t/c , ct]$ such that $d(\wt\varphi^s(y), \wt\psi^t(y))< R$.   Fix such $t, s$ and let $z = \varphi^{s}(y)$, so $\varphi^{-s}(z) = y$
This uniform bound implies that, there is a constant $c_3$ (which depends only on $R$, not on $t$) so that for any $u \in U_z$ we have 
\[ 
|| \pi_U \circ D\psi^t \circ D\mathrm{hol}^{\varphi}_{z, -s}(u)||_g > c_3||u||_g 
\] 
because $\pi_U \circ D\psi^t \circ D\mathrm{hol}^{\varphi}_{z, -s}$ is the infinitesemal holonomy of $\cF^h$ (measured in the $S^1$ fiber) between points distance $<R$ apart, and $N_\rho$ is compact.   
Setting $v =  D\mathrm{hol}^{\varphi}_{z, -s}(u)$ we then have 
\[ 
|| \pi_U \circ D\psi^t (v)||_g >  c_3 || D\mathrm{hol}^{\varphi}_{y, s}(v)||_g
\] 
If $k_1, k_2$ are the constants from Lemma \ref{lem_transverse_expansion}, we then have
\[ 
||\pi_U \circ D\psi^t (v) ||_g > c_3 k_1 \, e^{s k_2} ||v||_g \geq c_3 k_1 \, e^{t k_2/c} ||v||_g,
\] 
as desired. 

\end{proof}

\begin{lemma} 
The flow $\psi^t$  is Anosov.  
\end{lemma}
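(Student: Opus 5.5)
We will exhibit the Anosov splitting $TN_\rho = E^s_\psi \oplus \R Y \oplus E^u_\psi$ for the $C^1$ vector field $Y$. The stable part is already available. By Lemma \ref{lem_still_hyperbolic_on_leaves}, $E^h = E^s_\psi \oplus \R Y$ is $D\psi^t$-invariant, since $Y$ is tangent to the $C^1$ foliation $\cF^h$. Moreover $E^s_\psi$ is uniformly exponentially contracted in forward time. This uses compactness of $N_\rho$ and the fact that $g$ is uniformly comparable to the leafwise hyperbolic metric. What remains is to construct a one-dimensional invariant bundle $E^u_\psi$ complementary to $E^h$, and to show it is uniformly expanded.

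We will use the standard cone-field criterion. Consider the quotient bundle $TN_\rho/E^h$, which is identified with $U$ via $\pi_U$. The induced cocycle $\pi_U\circ D\psi^t|_U$ is exactly the derivative of the fiber holonomy. By Lemma \ref{lem_contraction_for_psi} it expands uniformly exponentially, with some exponent $c_2$. Let $\lambda<1$ be the exponent bounding $D\psi^t|_{E^h}$ from above in forward time: $E^s_\psi$ is contracted, and $\R Y$ is bounded since $Y$ is bounded above and below. Then the quotient cocycle dominates $E^h$. This is precisely the setting of a dominated splitting with invariant subbundle $E^h$.

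For the construction, choose $T_0$ with $c_1e^{T_0c_2}$ large compared to $\sup_{t\le T_0}\|D\psi^t|_{E^h}\|$. Then the cone $\mathcal C_\beta = \{u+w: u\in U, w\in E^h, \|w\|\le \beta\|u\|\}$ is mapped strictly inside itself by $D\psi^{T_0}$, for suitable $\beta$. Define
\[ E^u_\psi(x) = \bigcap_{n\ge 0} D\psi^{nT_0}\bigl(\mathcal C_\beta(\psi^{-nT_0}x)\bigr). \]
The contraction of cone aperture, coming from domination, shows this intersection is a single line. It is invariant and continuous. We also need invariance under $\psi^t$ for all $t$, not only multiples of $T_0$; this follows from uniqueness of the line. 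Finally, vectors $v\in E^u_\psi$ satisfy $\|v\|\asymp\|\pi_U v\|$ and $\pi_U D\psi^t v = \pi_U D\psi^t \pi_U v$, the latter because $E^h$ is invariant. Lemma \ref{lem_contraction_for_psi} then gives $\|D\psi^t v\|\ge c e^{tc_2}\|v\|$ for $t>0$, which is uniform expansion.

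The main obstacle is bookkeeping of regularity and of the metric. Only $C^1$ regularity of $Y$ and of $N_\rho$ is available, so cone-field arguments must use only $D\psi^t$ and continuity, which suffices. We also need a bound on $\sup_{t\le T_0}\|D\psi^t\|$ in mixed directions, and we will take it from compactness and continuity of $D\psi^t$. Once the splitting is established, $\psi^t$ is a $C^1$ Anosov flow. Smoothing the $C^1$ structure to a compatible $C^\infty$ one, and using structural stability of Anosov flows, gives a smooth Anosov vector field. That field is orbit equivalent to $\psi^t$, and hence to $\varphi^t$ by the earlier orbit-equivalence remark.
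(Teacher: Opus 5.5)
Your proposal is correct and follows essentially the same route as the paper. The stable part comes from Lemma \ref{lem_still_hyperbolic_on_leaves}. The unstable line bundle is the nested intersection of forward images of a cone around $U$ relative to the invariant $E^h$, built by playing the quotient expansion of Lemma \ref{lem_contraction_for_psi} against the forward boundedness of $D\psi^t|_{E^h}$, with the constants chosen in the same order (time, then off-diagonal bound, then aperture). The paper iterates over all $t>T$ instead of multiples of $T_0$, which is only a bookkeeping difference.

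One small slip: $D\psi^t|_{E^h}$ is only bounded in forward time, not contracted at a rate $\lambda<1$, because $E^h$ contains $\R Y$. Boundedness is all your cone argument actually uses, so nothing breaks.
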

The proof in fact applies generally to any smooth flow satisfying the conditions and conclusions of Lemma \ref{lem_transverse_expansion}. 

\begin{proof}
By Lemma \ref{lem_still_hyperbolic_on_leaves}, there is a $D\psi^t$-invariant splitting $E^h = E^s_\psi \oplus \R Y$, where $E^s_\psi$ is uniformly contracted by $\psi^t$. 
Thus, we need only find a strong unstable distribution for $\psi^t$.  We do this using Lemma \ref{lem_contraction_for_psi} and a cone field argument.

By the uniform contraction in $E^s_\psi$ and the fact that $E^h= \bR Y \oplus E^s_\psi$, there there is some constant $c_3>0$ so that for $y \in N_\rho$, $t>0$ and $w \in E^h$ we have

\begin{equation}\label{eq:derivativeH}
\|D\psi^t w \|_g \leq c_3 \|w\|_g
\end{equation}

Fix $T\gg 1$ so that $c_1 e^{t c_2} > 2 c_3$ for all $t>T$, where $c_1, c_2$ are as in \eqref{eq:derivativeU}.  

Suppose $y \in N_\rho$ and $u \in U_y$, fix $t>0$, and write $D\psi^t u = u_t + w_t$ with $u_t \in U_{\psi^t(y)}$ and $w_t \in E^h_{\psi^t(y)}$.  Since $E^h$ is $D\psi^t$-invariant, $D\psi^t u$ stays at bounded angle from $E^h$ over bounded time, thus there exists a constant $c_4>0$ such that $\|w_t\|_g \leq c_4 \|u_t\|_g$ for all $t \in [T, 2T]$.  Moreover, by compactness of $N_\rho$ this constant can be taken uniform over all $y$ and $u \in U_y$.

Now, fix a cone angle $\beta > 2c_4$.   If $v= u+w \in T_y N_\rho$ with $u \in U_y$ and $w \in E^h_y$ and such that $\|w\|_g \leq \beta \|u\|_g$ we get that $D\psi^t v = u_t + w_t + D\psi^t w$ where $u_t \in U_{\psi^t(y)}$ and $w_t + D \psi^t w \in E^h_{\psi^t(y)}$.  Provided $t \in [T, 2T]$, by the computation above we have 
that $\|u_t\|_g > 2 c_3 \|u\|_g$ while $\|w_t\|_g < c_4 \|u_t\|_g$ and $\|D\psi^t w\|_g \leq c_3 \|w\|_g \leq c_3 \beta \|u\|_g$.  Therefore by our choice of $\beta$ we get

$$ \| w_t + D\psi^t w\|_g < c_4 \|u_t\|_g + c_3 \beta \|u\|_g < \left(c_4 + \frac{\beta}{2}\right) \|u_t\| < \beta \|u_t\|. $$

This implies that $D\psi^t( \overline{C_\beta(U,E^h)}) \subset C_\beta(U,E^h)$ for $t \in [T, 2T]$.  Iterating, this implies that the same containment holds for all $t>T$.  Thus, we can consider: 
\[ E^u_{\psi}(x) := \bigcap_{t>T} D\psi^{t}\left( \overline{C_\beta(U_{\psi^{-t}(x)}, E^h_{\psi^{-t}(x)})} \right)\]
which is a flow invariant bundle.  By \eqref{eq:derivativeU} it is uniformly expanded, and therefore the flow $\psi^t$ is Anosov using\footnote{Formally, \cite{FisherHasselblatt} ask $\beta \in (0,1)$ but this is not needed, and can always be achieved by a smooth change of metric making the circle fibers much longer.} \cite[Proposition 5.1.7]{FisherHasselblatt}.   This concludes the proof of the Lemma, and hence the smooth case of Theorem \ref{thm_main}.
\end{proof}

Note that $\psi^t$ can also be approximated by a smooth, volume preserving Anosov flow, by the main result of \cite{Asaoka}. 

\section{Cannon Thurston maps and proof of Theorem \ref{thm_counterexample}}

Given Theorem \ref{thm_main}, in order to construct explicit examples of exotic topological codimension one Anosov flows, it remains to describe minimal actions of $\pi_1(M)$ on $S^1$ (for $M$ negatively curved) with equivariant maps to $\partial \wt M$.  For smooth flows, we need additionally that the non-injective set of the map has zero measure image in $\partial \wt M$.
Many examples of these maps and actions are known to exist when $M$ is a hyperbolic 3-manifold. We show here that, for all examples coming from {\em boundaries of orbit spaces of pseudo-Anosov flows}, the noninjectivity points always form a null set.  To do this, we  first summarize the set-up and construction.   

\subsection{Pseudo-Anosov flows and actions on $S^1$}
A {\em pseudo-Anosov flow} is a flow on a compact 3-manifold, satisfying the conditions of Definition \ref{def_TAF} (topological Anosov flow), but where the foliations $\cF^s$ and $\cF^u$ are permitted to have finitely many prong-type singularities along periodic orbits.   Thus, it is a strict generalization of topological Anosov flow (in dimension 3), and we will use the term pseudo-Anosov flow to refer to both types.  
See \cite{BM} for a formal definition.  
The most basic example is the suspension flow of a pseudo-Anosov homeomorphism of a surface of genus $g \geq 2$
\cite{CT}.  In fact,  suspensions are already a very large class of examples: by Agol's virtual fibering theorem \cite{Agol}, every hyperbolic 3-manifold has a finite cover supporting such a suspension flow.  

Pseudo-Anosov flows fall into two categories, {\em $\mathbb{R}$-covered} (meaning the leaf space of the lift of $\cF^s$ to $\wt M$ is homeomorphic to $\mathbb{R}$), or not.  There are many constructions of both types in the literature going back at least 40 years.  Recently, Bowden--Mann \cite{BoM} and Beguin--Yu \cite{BY} gave constructions of hyperbolic 3-manifolds supporting arbitrarily many $\mathbb{R}$-covered, and non-$\mathbb{R}$-covered Anosov flows, respectively.  

In this section, we consider only pseudo-Anosov flows on hyperbolic 3-manifolds which are either $\mathbb{R}$-covered or {\em quasigeodesic}, the latter property meaning that the orbits lift to uniform quasigeodesics in $\wt{M}$.  For {\em Anosov} flows  a strict dichotomy is shown in \cite{FenleyQGAnosov}: any Anosov flow on a hyperbolic 3-manifold $M$ is either $\mathbb{R}$-covered or quasigeodesic.  

Following \cite{Fenley-extension}, associated to such a flow $\Phi$, one can construct a natural action of $\pi_1(M)$ on $S^1$, as specified in the following two (depending on whether $\Phi$ is $\mathbb{R}$-covered or not) constructions:

\begin{construction}[Non $\mathbb{R}$-covered case]
The {\em orbit space} of a pseudo-Anosov flow is the quotient $\cO_\Phi := \wt M/ (x \sim \wt \Phi^t(x))$ whose study was initiated by Barbot and Fenley in the 1990s.  For a pseudo-Anosov flow, $\cO_\Phi$ is always a topological plane, with invariant (singular) foliations induced from the lifted foliations $\wt \cF^s$ and $\wt \cF^u$.  In \cite{Fenley-extension} it is shown that $\cO_\Phi$ can be compactified to a disk $\overline{\cO}_\Phi$ by adding a circle $S^1_\Phi$ of ideal points.  Informally speaking, this circle is a {\em completion} of the circularly ordered set of ends of leaves of the weak-stable and weak-unstable foliations in $\cO_\Phi$, with some suitable identifications.  See \cite[\S 3]{BM} for an expository account. 
 
The action of $\pi_1(M)$ on leaves extends naturally to an action on $S^1_\Phi$ by homeomorphisms, and when $\Phi$ is not $\mathbb{R}$-covered, this action is minimal \cite[Theorem 7.3]{BBM}.  
\end{construction} 

\begin{construction}[$\mathbb{R}$-covered case]\label{const_r_covered}
When $\Phi$ is a transversally orientable $\mathbb{R}$-covered Anosov flow, $\pi_1(M)$ acts by orientation-preserving homeomorphisms on  the space of weak-stable leaves in $\wt M$, which is by definition $\bR$. 

Additionally, any $\mathbb{R}$-covered Anosov flow which is not orbit equivalent to a suspension of a linear map of the torus (in particular, all that occur on hyperbolic 3-manifolds) has a particular structure called {\em skew}, which implies that this action commutes with a translation of $\bR$ because of the orientability assumption. 
Thus, the action of $\pi_1(M)$ on the leaf space descends to the quotient by this translation $S^1_\Phi = \mathbb{R}/\bZ$, which is also minimal \cite{Fenley-Anosov}.  
\end{construction} 

Remarkably, this action can also be recovered from the non-$\mathbb{R}$ covered setting above:  When $M$ has a transversely orientable, $\bR$-covered Anosov flow $\Phi$, by \cite{Thurston} (see also \cite{Fenley-transversepAflow, Calegari-Rcov}), it also supports a quasigeodesic  pseudo-Anosov flow $\Psi$ transverse to the weak-stable leaves of $\Phi$, and the action of $\pi_1(M)$ on the boundary of $\cO_\Psi$ agrees with the above action on $S^1_\Phi=\mathbb{R}/\bZ$ (see e.g. \cite[Proposition 6.6]{Pot-Anosov} for a precise statement and a description of the identification).

So far it seems we have gained nothing new from the previous construction. However, the difference is that
in this case the action of $\pi_1(M)$ on $S^1_\Phi=\mathbb{R}/\bZ$ obviously lifts to
an action on $\mathbb{R}$ (whereas in other examples, such as from pseudo-Anosov suspensions, the actions often do not lift to an action on
$\mathbb{R}$). This extra property will be used for Corollary \ref{cor_infiniteness}. 

\subsection{Cannon-Thurston maps}\label{ss.CTmaps}
We now describe how to pass from the orbit space and action of $\pi_1(M)$ on the circle, to an equivariant map to $\partial \wt M$.  This was done for pseudo-Anosov suspensions by Cannon and Thurston \cite{CT}, and generalized to the quasigeodesic case in \cite{Fenley-ext2}. 

Let $\Phi^t$ be a quasigeodesic (pseudo-)Anosov flow on a hyperbolic $3$-manifold $M$.  
Since $\Phi^t$ is quasigeodesic, each orbit $\gamma$ of the lifted flow on $\wt M$ has a positive endpoint $e^+(\gamma)$ and negative endpoint $e^-(\gamma)$ on $\partial \wt M$.  

\begin{theorem}[ \cite{Fenley-ext2}, Theorem H] \label{thm_map_f}
Let $\Phi^t$ be a quasigeodesic (pseudo-)Anosov flow in a closed hyperbolic 
$3$-manifold $M$.
Let $S$ denote the sphere obtained by gluing two copies $\overline{\cO}_\Phi^1$ and $\overline{\cO}_\Phi^2$ of the compactified orbit space of $\Phi$ together by the identity map along their boundary $S^1_\Phi$.   
Let $e: {\cO}_\Phi^1 \cup {\cO}_\Phi^2 \to \partial \wt M$ be the map which restricts to $e^+$ on ${\cO}_\Phi^1$, and $e^-$ on ${\cO}_\Phi^2$.  

Then $e$ extends  to a continuous map $f: S \to \partial \wt M$ whose restriction to $S^1_\Phi$ is $\pi_1(M)$-equivariant. 
\end{theorem}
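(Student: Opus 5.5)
The plan is to build the extension $f$ pointwise on the ideal circle $S^1_\Phi$ from limits of $e^\pm$ along sequences in the orbit space, and then to check continuity, well-definedness and equivariance. Since $\Phi$ is quasigeodesic, every orbit of $\wt\Phi$ is a uniform quasigeodesic, so by the Morse lemma it lies within bounded distance of a geodesic in $\mathbb{H}^3$. Hence $e^\pm:\cO_\Phi\to\partial\wt M$ are well defined. They are continuous, because nearby orbits fellow travel for a long time and uniform quasigeodesics that fellow travel over long segments have close endpoints. They are $\pi_1(M)$-equivariant, because deck transformations permute orbits and act by isometries. Equivariance on $S^1_\Phi$ will then follow from naturality once $f$ is well defined there.

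The core step is to show that for every ideal point $\eta\in S^1_\Phi$ and every sequence $x_n\in\cO_\Phi$ with $x_n\to\eta$ in $\overline{\cO}_\Phi$, both $e^+(x_n)$ and $e^-(x_n)$ converge, and to the same point $f(\eta)$. I would first identify neighborhood bases of $\eta$ in $\overline{\cO}_\Phi$. These are given by regions bounded by (half-)leaves of the stable and unstable foliations, or by polygonal paths made of such leaves, which shrink to $\eta$; this is the structure coming from the construction in \cite{Fenley-extension}. Next I would show that, for a half-leaf $\ell$ of $\wt\cF^s$ escaping to $\eta$, the set $e^+$ of all orbits in the corresponding region of $\wt M$ has small visual diameter. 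The point is that orbits in a stable leaf are forward asymptotic, so the stable leaf $\wt\cF^s(\ell)$ has all its positive endpoints equal to a single point. Quasigeodesicity then shows that the whole region lies far from the basepoint $o$, because its orbits pass close to orbits on leaves that are far out in the orbit space. To make this rigorous I would use the uniform separation of leaves: a compact set in $\wt M$ meets only boundedly many leaves of the relevant kind, so leaves that escape to infinity in $\cO_\Phi$ must leave every compact set of $\wt M$. The same argument with unstable leaves controls $e^-$. Because both families of leaves accumulate on $\eta$, the limits of $e^+$ and $e^-$ coincide.

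The main obstacle is exactly this diameter-shrinking statement, that is, showing that the visual diameters of the images of the neighborhood regions tend to zero. I would try to prove it using the quasigeodesic property together with a compactness and contradiction argument. Suppose the images had diameter bounded below. Then one gets orbits in the regions passing through a fixed compact set of $\wt M$, with the regions shrinking to $\eta$. Since a compact set of $\wt M$ projects to a compact set of $\cO_\Phi$, these orbits cannot escape to the ideal boundary, which is a contradiction. The remaining technical care is in the singular (prong) case and in the case of $\eta$ being an ideal point of a non-separated or lozenge configuration. There I would rely on the circle-of-ideal-points structure from \cite{Fenley-extension}, which says that neighborhoods are still given by finite unions of leaf-bounded regions. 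Once these limits exist, continuity of $f$ on $S$ follows by a diagonal argument, and equivariance follows because $\gamma$ maps neighborhood regions of $\eta$ to those of $\gamma\eta$.
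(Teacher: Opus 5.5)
Your soft steps are fine. Continuity and equivariance of $e^\pm$ are standard. The coincidence of the two limits needs no leaf argument: the projection $\pi\colon \wt M\to\cO_\Phi$ sends compact sets to compact sets, so orbits $x_n\to\eta$ eventually avoid every ball $B(o,R)$, and being uniform quasigeodesics they then have visually close endpoints. This is the correct form of your claim that ``a compact set meets only boundedly many leaves'', which is false as stated, since a compact set meets uncountably many leaves.

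The genuine gap is in the step you single out yourself, the existence of the limit. Your contradiction argument rests on the implication ``if $e^+(U_n)$ has visual diameter bounded below, then some orbit of $U_n$ meets a fixed compact set of $\wt M$''. This implication is false. Distance from $o$ controls the two endpoints of each individual orbit, not the spread of endpoints across different orbits. For instance, the geodesics tangent to the sphere of radius $R$ about $o$ all avoid $B(o,R)$, and their union $\wt M\setminus B(o,R)$ is connected, yet their endpoints cover all of $\partial\wt M$. So knowing that $\pi^{-1}(U_n)$ is far from $o$ (which is automatic and uses no quasigeodesicity) says nothing about $\diam\, e^\pm(U_n)$. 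As a result, the core of the theorem is not established.

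What is missing is an argument combining the leaf structure of the frontier of the neighborhoods with separation in $\wt M$. Suppose $U$ is bounded by a polygonal path $\theta$ made of $k$ stable and unstable pieces whose orbits all avoid $B(o,R)$.
\begin{itemize}
\item On each stable piece $e^+$ is constant, and each $e^-$ there is $\epsilon(R)$-close to it. Dually for unstable pieces, and consecutive pieces share a corner orbit.
\item Hence, by the Morse lemma, $\pi^{-1}(\theta)$ lies in a half-space $H$ with $d(o,H)\to\infty$ as $R\to\infty$, for fixed $k$.
\item Since $\pi$ is open, $\pi^{-1}(\theta)$ is the frontier of $\pi^{-1}(U)$. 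The connected set $\wt M\setminus H$ contains $o$ and misses this frontier, so $\pi^{-1}(U)\subset H$.
\end{itemize}
To run this you need every ideal point $\eta$ to have a neighborhood basis of such regions, with a bounded number of pieces converging to $\eta$ in $\overline{\cO}_\Phi$. This is exactly where prongs, perfect fits, lozenges and non-separated leaves must be handled. All of these occur for quasigeodesic flows, for example non-$\mathbb{R}$-covered Anosov flows, and your proposal only gestures at them.

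For comparison, the paper does not prove this statement; it imports it from \cite{Fenley-ext2}. It describes $f$ as the quotient map of the decomposition of $S$ into compactified stable leaves in $\overline{\cO}^1_\Phi$, compactified unstable leaves in $\overline{\cO}^2_\Phi$, and the remaining singleton ideal points. Upper semicontinuity of this decomposition (\cite[Section 6]{Fenley-ext2}) plus Moore's theorem gives a sphere, which is then identified with $\partial\wt M$ through the dynamics of the flow.
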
 

Since $e^-$ is constant along leaves of $\wt \cF^u$, and $e^+$ is constant along leaves of $\wt \cF^s$ (due to the asymptotic behavior of orbits) the map $f$ can alternatively be described as a quotient:  Consider the decomposition of $S$ where decomposition elements are compactified stable leaves in $\overline{\cO}_\Phi^1$, compactified 
unstable leaves in $\overline{\cO}_\Phi^2$, and otherwise points (necessarily in $S^1_\Phi$; these are the ``completion points" in the boundary circle which do not arise as endpoints of leaves). This decomposition is upper semi-continuous (see  \cite[Section 6]{Fenley-ext2}), so the classical decomposition theorem of Moore applies (see \cite{Fenley-ext2}), and the quotient is a topological sphere.  This sphere can then naturally be identified with $\partial \wt M$ by using the dynamics of the flow.  
Though not explicitly stated in \cite{Fenley-ext2}, the construction there, as we have also described above, directly gives the following: 
\begin{proposition}
The set of non-injectivity points of $f$ is contained in the image of $e$. 
\end{proposition}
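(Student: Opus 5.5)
The plan is to read the claim off the quotient description of $f$ just given. By Moore's theorem and the identification of the quotient sphere with $\partial \wt M$, the map $f: S \to \partial \wt M$ is the quotient map by the upper semi-continuous decomposition $\mathcal{D}$ of $S$. The decomposition elements are:
\begin{itemize}
\item compactified leaves of $\wt\cF^s$ in $\overline{\cO}_\Phi^1$;
\item compactified leaves of $\wt\cF^u$ in $\overline{\cO}_\Phi^2$;
\item singletons, at points of $S^1_\Phi$ that are not ideal points of leaves.
\end{itemize}
Since $f$ factors as the quotient $S \to S/\mathcal{D}$ followed by the homeomorphism $S/\mathcal{D}\cong \partial\wt M$, two points of $S$ have the same image exactly when they lie in the same element of $\mathcal{D}$ (here I use the injectivity of that identification, which is part of Theorem \ref{thm_map_f}'s construction in \cite{Fenley-ext2}). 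In particular, for $q \in S^1_\Phi$, the fiber $f^{-1}(f(q))$ is exactly the decomposition element containing $q$.

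Now let $q\in S^1_\Phi$ be a non-injectivity point of $f$ restricted to $S^1_\Phi$, meaning there is $q'\neq q$ in $S^1_\Phi$ with $f(q)=f(q')$. Then $q$ and $q'$ lie in a common element $E$ of $\mathcal{D}$, and $E$ is not a singleton. So $E$ is either a compactified stable leaf $\overline{\ell}\subset \overline{\cO}_\Phi^1$ or a compactified unstable leaf in $\overline{\cO}_\Phi^2$. In the first case, $\ell$ is a genuine leaf in the open orbit space $\cO^1_\Phi$, and so is nonempty. Picking any orbit $x\in \ell$, we get $f(q)=f(x)=e^+(x)$, which lies in the image of $e$. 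The unstable case is symmetric and gives $f(q)=e^-(y)$ for some $y$ in the unstable leaf. Hence $f(q)\in e(\cO^1_\Phi\cup\cO^2_\Phi)$. Reading ``non-injectivity points'' as their images, which is the sense used in Theorem \ref{thm_main}, this is exactly the claim.

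The one point that needs care is the case where a decomposition element meets $S^1_\Phi$ in a single point but also contains interior points. Such a $q$ is non-injective only as a map from $S$, not from $S^1_\Phi$; either way its image is again a value of $e$. The main obstacle is therefore not the argument above but justifying that the decomposition description is accurate, specifically two facts:
\begin{itemize}
\item $e^+$ is constant on each compactified stable leaf, including at its ideal points. This follows from continuity of the extension $f$ together with $e^+$ being constant along leaves, by forward asymptoticity of orbits.
\item Distinct decomposition elements have distinct images.
\end{itemize}
I would cite \cite[Section 6]{Fenley-ext2} for the second fact.
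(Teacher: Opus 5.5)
Your proposal is correct and is essentially the paper's own argument. The paper also reads the proposition off the quotient description of $f$: every non-singleton fiber contains a genuine stable or unstable leaf, so its image lies in $e^+(\cO_\Phi)\cup e^-(\cO_\Phi)$. Like you, it defers to \cite{Fenley-ext2} the one substantive input, namely that points of $S^1_\Phi$ which are not ideal points of leaves are identified with nothing else.

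One refinement: compactified leaves with the same image, such as a stable and an unstable leaf forming a perfect fit, or non-separated leaves, should strictly lie in a single decomposition element. Such an element still contains an interior point, so your argument goes through unchanged.
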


\subsection{Conclusion of proof of Theorem \ref{thm_counterexample}}

Given the discussion above, to produce examples of exotic codimension one Anosov flows, and prove Corollary \ref{coro-4manifold}, it suffices to show the following: 
\begin{proposition}  \label{prop_null}
Let $\Phi^t$ be a quasigeodesic (pseudo-)Anosov flow on a hyperbolic $3$-manifold $M$, and $f$ as in Theorem 
\ref{thm_map_f}.  Then the set of points in $\partial \wt M$ with multiple pre-images under $f$ has\footnote{Using \cite{LY} one could actually show that this set has Hausdorff dimension less than 2, but we don't need this stronger fact.} measure 0. 
\end{proposition}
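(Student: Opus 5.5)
By the preceding Proposition, the non-injectivity set of $f|_{S^1_\Phi}$ lies in the image of $e$. More precisely, a point $\xi\in\partial\wt M$ with several preimages is of the form $e^+(\ell)$ for some stable leaf $\ell$ of $\wt\cF^s$ whose compactification meets $S^1_\Phi$ in at least two ideal points, or $e^-(\ell)$ for such an unstable leaf. Every (nonsingular or singular) leaf has at least two ideal points, so this alone gives no smallness. My plan is instead to show that the whole images $e^+(\cO_\Phi)$ and $e^-(\cO_\Phi)$ have Lebesgue measure zero. By symmetry (reverse the flow) it suffices to treat $e^+$.

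\textbf{Step 1: reduce to countably many compact pieces.} Since $e^+$ is constant on stable leaves, $e^+(\cO_\Phi)$ is the image of the stable leaf space. Cover $\cO_\Phi$ by countably many compact product boxes, which in turn come from countably many compact sets in $\wt M$ projecting to a fundamental domain and its translates. Then $e^+(\cO_\Phi)=\bigcup_{\gamma\in\pi_1(M)}\gamma\, e^+(K)$, where $K\subset \wt M$ is a compact fundamental domain and $e^+(x)$ denotes the forward endpoint of the orbit through $x$. Since $\pi_1(M)$ is countable and acts by conformal maps, which preserve null sets, it suffices to show that $A:=\{e^+(x): x\in K\}$ is $\mu_o$-null.

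\textbf{Step 2: an invariant set argument.} Let $E=e^+(\wt M)$, a $\pi_1(M)$-invariant Borel set (it is $\sigma$-compact by Step 1). The action of $\pi_1(M)$ on $\partial\wt M$ is ergodic for Lebesgue measure (Sullivan, since $M$ is closed hyperbolic). Hence $E$ has either measure zero or full measure. So the task is to rule out full measure. I would do this by showing that the complement of $E$ contains a set of positive measure. Alternatively, I would show that a.e. point of $\partial\wt M$ is not a forward endpoint of any orbit, using the following dichotomy. For $\xi=e^+(x)$, the geodesic ray to $\xi$ is fellow-traveled by the forward flow ray from $x$, which projects to $M$ as a forward orbit of $\Phi$. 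Lebesgue-typical $\xi$ correspond to geodesic rays that are generic for the Liouville measure of the geodesic flow on $T^1M$ (by Hopf ergodicity). One must then show that Liouville-generic geodesic rays are not uniformly shadowed by flow orbits.

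\textbf{Step 3 (main obstacle): generic geodesics are not shadowed by orbits.} The key input I would use is that along a flow orbit, the associated quasigeodesic has a definite ``direction'' relative to the flow. Pick two periodic orbits $\alpha,\beta$ whose axes have opposite behaviour relative to the flow; for instance, take the closed geodesic freely homotopic to $\alpha^{-1}$. Such a geodesic is not shadowed in forward time by any flow ray, since a shadowing flow ray would be forward asymptotic to a lift of $\alpha^{-1}$, which is impossible for a pseudo-Anosov flow, whose periodic orbits are not homotopic to inverses of themselves in a quasigeodesic flow. A Liouville-generic geodesic ray contains arbitrarily long segments fellow-traveling this closed geodesic, infinitely often, by ergodicity. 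I would then argue via quasigeodesic stability with uniform constants that a flow ray fellow-traveling the geodesic would, on those long segments, have to travel backwards along a lift of $\alpha$, contradicting the forward direction of the flow once the segment length exceeds a constant depending on the quasigeodesic constants. This step is the delicate one: I need the fellow-traveling constant to be uniform over $x\in K$ and the parametrizations to be monotone, so that ``going backwards for a long time'' is a genuine contradiction. Granting this, the set of Liouville-generic endpoints is disjoint from $E$, so $E$ is null. This gives the Proposition, and with the preceding Proposition the non-injective image is null.
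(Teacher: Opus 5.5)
\textbf{The gap is in Step 3.} It is not only the uniformity issue you flag: both the choice of the forbidden geodesic and the mechanism for the contradiction are wrong.

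\emph{The cited fact is irrelevant.} That a periodic orbit is not freely homotopic to its own inverse is automatic in a closed hyperbolic manifold: a conjugator taking $\alpha$ to $\alpha^{-1}$ would swap the endpoints of the axis and hence be elliptic. What you actually need is that no orbit of $\wt\Phi$ has endpoint pair $(\alpha^+,\alpha^-)$, i.e.\ that the closed geodesic freely homotopic to $\alpha^{-1}$ is not shadowed by any orbit.

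\emph{That property can fail.} If $\alpha$ is a corner of a lozenge, the opposite corner $\beta$ is a periodic orbit freely homotopic to $\alpha^{-1}$, exactly as a closed orbit and its reverse in the geodesic flow of a surface. Nothing in your argument excludes lozenges for quasigeodesic flows. This is why the paper only uses that for many $\gamma$ \emph{at least one} of $\gamma,\gamma^{-1}$ is not represented by a periodic orbit. When such a $\beta$ exists, its lifts shadow your chosen geodesic for all time, and Step 3 yields nothing.

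\emph{The local ``direction'' heuristic does not rescue it.} Quasigeodesic stability only places the flow ray within a fixed, possibly large, distance $R$ of the lifted axis. At that scale there is no local product structure forcing nearby orbits to move parallel to $\wt\alpha$, so an orbit may run along the axis in the opposite direction without contradiction.

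\textbf{The missing idea} is the set the paper works with. Let $\Lambda\subset T^1M$ be the projection of the oriented geodesics $(e^-(x),e^+(x))$, $x\in\wt M$. Uniform quasigeodesicity makes $\Lambda$ closed, it is invariant under geodesic flow, and $\Lambda\neq T^1M$ because some oriented free homotopy class is not represented by a periodic orbit.

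If you take the forbidden region to be the open set $T^1M\setminus\Lambda$, your ergodic route works with no uniformity issues:
\begin{enumerate}
\item If $\xi=e^+(x)$, the geodesic ray from $o$ to $\xi$ is forward asymptotic to the geodesic $(e^-(x),\xi)$. Since $\Lambda$ is closed, the ray's $\omega$-limit set in $T^1M$ lies in $\Lambda$.
\item By Hopf ergodicity, a.e.\ vector has a dense forward orbit. This property is constant along weak stable leaves, so it depends only on the forward endpoint. Hence for Lebesgue-a.e.\ $\xi$ the ray to $\xi$ is dense in $T^1M$ and must enter $T^1M\setminus\Lambda$.
\item Therefore $e^+(\wt M)$ is null directly, and the boundary ergodicity in Step 2 is unnecessary.
\end{enumerate}

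This is a genuine alternative to the paper's argument. The paper instead proves, via a Lebesgue density point argument together with minimality of the strong stable foliation and density of periodic orbits, that a closed invariant set meeting some strong stable leaf in positive measure is all of $T^1M$. It then applies this to the stable and unstable sides of product boxes in $\cO_\Phi$. Both approaches hinge on $\Lambda$ being a closed, proper, invariant subset, and that is exactly what your Step 3 does not supply.
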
 

For the proof, we first give a general lemma: 

\begin{lemma} \label{lem_positive_implies_everything}
Let $M$ be a closed, hyperbolic manifold and $\Lambda \subset T^1M$ a closed subset invariant under geodesic flow.  If there exists a strong stable leaf $W^{ss}$ for geodesic flow such that $\Lambda \cap W^{ss}$ has positive Lebesgue measure in $W^{ss}$, then $\Lambda = T^1M$.  
\end{lemma}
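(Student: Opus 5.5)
The plan is to use ergodicity of the geodesic flow together with the uniform continuity of the geodesic flow along strong stable leaves, via a Lebesgue density argument. Fix a lift to $T^1\mathbb{H}^n$. The strong stable leaf $W^{ss}$ through a vector pointing to $\xi \in \partial \wt M$ is identified with a horosphere $H$ based at $\xi$: each vector in $W^{ss}$ is the inward normal to $H$ pointing to $\xi$. The geodesic flow for time $t$ maps $W^{ss}$ to the strong stable leaf of the horosphere $\varphi^t H$. In horospherical coordinates it acts as a homothety by $e^{-t}$, contracting distances and rescaling Lebesgue measure by $e^{-(n-1)t}$.

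First I pick a Lebesgue density point $v$ of $\Lambda\cap W^{ss}$. Then for every $\eta>0$ there is a small ball $B_r(v)\subset W^{ss}$ with $\mathrm{Leb}(\Lambda\cap B_r(v)) \geq (1-\eta)\mathrm{Leb}(B_r(v))$. Flowing forward for time $t = \log(1/r)$ maps $B_r(v)$ onto a ball of radius $1$ in the strong stable leaf of $\varphi^t v$. Because the flow acts as a homothety, the relative density of $\Lambda$ in that unit ball is still at least $1-\eta$. Since $\Lambda$ is invariant, $\varphi^t(\Lambda\cap B_r(v)) \subset \Lambda$.

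Next I project to $T^1M$, which is compact. I choose $r_k\to 0$ with $\eta_k\to0$ and pass to a subsequence so that $\varphi^{t_k}v \to w$. Unit strong stable balls vary continuously in $w$. Closedness of $\Lambda$ then gives that the whole unit ball $W^{ss}_1(w)$ lies in $\Lambda$. Indeed, any point of $W^{ss}_1(w)$ is a limit of points in the $k$-th unit balls at distance $\to 0$ from $\Lambda$, because the density of $\Lambda$ tends to $1$ and any small sub-ball of fixed size must meet $\Lambda$ for $k$ large. Flowing backwards, $\varphi^{-s}W^{ss}_1(w)$ is a strong stable ball of radius $e^{s}$, so by invariance the entire strong stable leaf $W^{ss}(w)$ is contained in $\Lambda$. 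The same argument applies to the weak stable leaf, since it is the flow saturation of the strong stable leaf.

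Finally, I use minimality of the strong stable foliation of geodesic flow on a closed hyperbolic manifold; equivalently, horocycle flows and horospherical foliations are minimal, since $\pi_1(M)$ acts minimally on $\partial\wt M$. A weak stable leaf corresponds to the set of vectors pointing to a fixed $\zeta\in\partial\wt M$. The closure of $\pi_1(M)$-translates of such a set is all of $T^1M$, because the orbit of $\zeta$ is dense in the boundary. The strong stable leaf is even dense by itself. Since $\Lambda$ is closed and contains a dense leaf, $\Lambda = T^1M$.

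The main obstacle I expect is the limiting step: converting measure density close to $1$ into actual containment of a full unit ball in the limit. This needs the uniform comparability of Lebesgue measure on nearby strong stable leaves, which holds because in hyperbolic space all horospheres are isometric to $\mathbb{R}^{n-1}$ and the foliation is smooth. It also needs closedness of $\Lambda$. If density alone is awkward, I would alternatively invoke ergodicity of Lebesgue (Liouville) measure together with absolute continuity of the stable foliation, to get that $\Lambda$ has positive, hence full, measure. Closedness would then again give $\Lambda=T^1M$.
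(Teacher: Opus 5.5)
Your strategy matches the paper's: a Lebesgue density point, rescaling by the geodesic flow, then compactness and closedness of $\Lambda$ to trap a whole strong stable disc inside $\Lambda$, then minimality. The genuine gap is the step from the disc to a full leaf. You write that $\varphi^{-s}W^{ss}_1(w)$ is a strong stable ball of radius $e^{s}$ and conclude $W^{ss}(w)\subset\Lambda$. But $\varphi^{-s}W^{ss}_1(w)$ is the ball of radius $e^{s}$ about $\varphi^{-s}w$ in the leaf $W^{ss}(\varphi^{-s}w)$, which in general is a different leaf from $W^{ss}(w)$. Concretely, take the upper half-space model with the lifted vectors of $W^{ss}(w)$ pointing to $\infty$. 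These balls are discs of one fixed Euclidean radius on lower and lower horospheres. Together they sweep out the flow saturation of the unit disc, which is a vertical cylinder, not a horosphere. So at this point no complete leaf is known to lie in $\Lambda$, and your minimality argument has nothing to act on.

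The repair is short. Choose $s_k\to\infty$ with $\varphi^{-s_k}w\to w'$ in $T^1M$. For each fixed $R$ you have $W^{ss}_R(\varphi^{-s_k}w)\subset\Lambda$ as soon as $e^{s_k}\ge R$. Continuity of the strong stable foliation and closedness of $\Lambda$ then give $W^{ss}_R(w')\subset\Lambda$ for every $R$, i.e.\ $W^{ss}(w')\subset\Lambda$, and your density-of-leaves argument finishes.

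The paper instead picks a periodic point $p$ of period $T$ in the interior of the limiting disc, so that $\varphi^{-kT}$ fixes $p$ and blows the disc up to all of $W^{ss}(p)$. Density of periodic orbits alone does not supply such a $p$. A closed orbit meets the weak stable leaf containing the disc only if the common forward endpoint in $\partial\wt M$ of the disc's lifted vectors is the attracting fixed point of a nontrivial element of $\pi_1(M)$, and there are only countably many such points. So the accumulation-point argument is the more robust way to close this step.

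Two smaller points:
\begin{itemize}
\item The flow acts on strong stable leaves by the homothety $e^{-t}$, as you say, so you must flow \emph{backward} by $\log(1/r)$ to blow $B_r(v)$ up to radius $1$. This is harmless, since $\Lambda$ is invariant in both time directions.
\item Your ergodic fallback does not get started. A single leaf is Liouville-null, and absolute continuity only transfers information across positive-measure families of leaves. Positive measure in one leaf therefore gives no lower bound on the Liouville measure of $\Lambda$.
\end{itemize}
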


\begin{proof} 
We use the Sasaki metric in $T^1 M$, which has the property that flowing backwards from the strong stable manifold of $v$ to the strong stable manifold $g_{-t}(v)$ by the geodesic flow rescales the metric by a factor of $e^t$. 

Assume that the Lebesgue measure of some strong stable manifold of the geodesic flow $g_t: T^1 M \to T^1 M$ intersected with $\Lambda$ is positive.  
Let $v \in  \Lambda \cap W^{ss}_{loc}(v)$  be a Lebesgue density point of $\Lambda \cap W^{ss}_{loc}(v)$ where $W^{ss}_{loc}(v)$ denotes the local strong stable manifold of the geodesic flow at the point $v$. This means that there are disks  $D_n \subset W^{ss}_{loc}(v)$ centred at $v$ and of radius $\leq 1/n$ so that $\frac{m_{ss}(D_n \cap  \Lambda)}{m_{ss}(D_n)} \to 1$ where $m_{ss}$ denotes the Lebesgue measure on strong stable manifolds. Flowing backward by the geodesic flow we get that for for some times $t_n \to \infty$ we obtain disks $\hat D_n \subset W^{ss}_{loc}(g_{-t_n}(v))$ of some fixed radius $\eps>0$ around $g_{-t_n}(v)$. By the elementary fact above and invariance of $\Lambda$ we get that $\frac{m_{ss}(\hat D_n \cap \Lambda)}{m_{ss}(\hat D_n)} \to 1$.  By compactness of $M$, after a subsequence we can assume that $\hat D_n \to D_\infty$ a closed disk of radius $\eps$ in some strong stable manifold. By the density computation, we get that $\hat D_n \cap  \Lambda \to D_\infty$ in the Hausdorff topology, and therefore $D_\infty \subset \Lambda$, since $\Lambda$ is closed. 
The set of periodic orbits of the geodesic flow is dense, so some periodic orbit intersects $D_\infty$ in the interior, and as $\Lambda$ is flow invariant, it follows that a strong stable leaf is contained in $\Lambda$. Since the strong stable foliation is minimal in $T^1M$, and $\Lambda$ is closed, this implies that $\Lambda = T^1 M$.   
\end{proof}

\begin{proof}[Proof of Proposition \ref{prop_null}]
As described above, the orbit space $\cO_\Phi$ inherits a pair of 1-dimensional, topologically transverse, singular foliations induced from $\wt \cF^s$ and $\wt \cF^u$, with isolated prong singularities.  
 Thus,  $\cO_\Phi$ can be covered by countably many compact rectangles, each trivially foliated as a product.  
We will show that, for any such box $B$, the images  $e^+(B)$ and $e^-(B)$ have 
Lebesgue measure zero. 

Consider the map $(e^-, e^+): \cO_\Phi \to \partial \wt{M} \times \partial \wt{M}$  given by $\gamma \mapsto (e^-(\gamma), e^+(\gamma))$.  This map $(e^-, e^+)$ is continuous and $\pi_1(M)$-equivariant.  Since orbits of quasigeodesic flows are uniformly quasigeodesic and have distinct forward and backwards endpoints (when lifted to the universal cover), the image avoids the diagonal hence is a $\pi_1(M)$-invariant closed subset of 
$\wt \Lambda \subset \partial \wt{M} \times \partial  \wt{M} \setminus \Delta$, which is the set of oriented geodesics in $T^1\wt{M}$.  Thus, $\wt \Lambda$ descends to a subset $\Lambda$ of $(T^1\wt M)/\pi_1(M) = T^1M$ that is invariant under the geodesic flow (see e.g \cite[\S 10]{FrankelLandry} for more details on this construction).  

Since $\wt \Lambda$ is closed, $\Lambda$ is compact.  Furthermore, $\Lambda \neq T^1 M$ because any pseudo-Anosov flow $\Phi$ on a $3$-manifold $M$ has the property that not every free homotopy class of closed, oriented geodesic in $M$ (that is, a periodic orbit of geodesic flow) is realized by an orbit of $\Phi$.  (See for instance \cite[Proposition 2.9.3]{BM} which says for any pseudo-Anosov flow, there are many $\gamma \in \pi_1(M)$ so that at least one of $\gamma$ or $\gamma^{-1}$ is not freely homotopic to a periodic orbit.) 

By Lemma \ref{lem_positive_implies_everything}, this means that $\Lambda \cap W^{ss}$ has zero Lebesgue measure in $W^{ss}$, for every strong stable leaf.   
Consider a trivially foliated rectangle $B$ as above.  Identify $B$ with $[0,1]^2$ where the horizontal foliation is the stable foliation in $\cO_\Phi$, and the vertical foliation is unstable in $\cO_\Phi$.   Since $e^+$ (resp. $e^-$) is constant along stable (resp. unstable) leaves, the image of $B$ under $e^-$ is equal to $e^- ([0,1] \times \{0\})$.  
Now the map $(e^-, e^+)$ sends $[0,1] \times \{0\})$ into a single stable leaf of $T^1 \wt M$.  
By Lemma \ref{lem_positive_implies_everything}, this image intersects the stable leaf in a Lebesgue null set.  Thus, the corresponding negative endpoints (on $\partial \wt M$) is also a null set, since projection from strong stable leaves along the flow to the boundary in hyperbolic space is smooth.  
The argument for $e^-$ is exactly the same.  
\end{proof} 

\subsection{Infiniteness: end of proof of Theorem \ref{thm_counterexample} and Corollary \ref{cor_infiniteness} }
The flexibility in the construction of $f$ allows for infinitely many examples, even up to covers:   Since there are infinitely many commensurability classes of hyperbolic 3-manifolds (see e.g. \cite[Corollary 8.4.2]{MR}),
and virtual fibering \cite{Agol} ensures that each of these contains a 3-manifold with a quasigeodesic pseudo-Anosov flow, we obtain an infinite sequence of incommensurable $M_i$ such that some circle bundle $N_i$ over $M_i$ admits a codimension one Anosov flow; and these $N_i$ are pairwise incommensurable as well.  

The final assertion of Theorem \ref{thm_counterexample} is given by the failure of finiteness statement in Corollary \ref{cor_infiniteness}.  To prove this, consider any orientable $\mathbb{R}$-covered Anosov flow on a hyperbolic 3-manifold, and 
perform Construction \ref{const_r_covered} to get an action $\rho$ of $\pi_1(M)$ on $S^1 = \mathbb{R}/\mathbb{Z}$  (which agrees with the boundary of the orbit space action for some transverse quasigeodesic pseudo-Anosov flow) as well as a map $f: S^1 \to \partial \wt{M}$ via the identification of the circles explained there.  Since this is induced from an action of $\pi_1(M)$ on $\mathbb{R}$, it lifts to any finite cover $\mathbb{R}/k \mathbb{Z}$.  Let $\rho_k$ denote this lift.   This gives us an infinite family of manifolds $N_{\rho_k}$, with Anosov flows $\psi_k$, each pair with a common a fiberwise cover.

Since $\rho$ lifts to an action $\hat \rho$ on $\mathbb{R}$, each $N_{\rho_k}$ is diffeomorphic to $M \times S^1$.  This can be seen by Euler number considerations, but also directly since it is 
diffeomorphic to the quotient of $\wt M \times \mathbb{R}$ by the diagonal action of $\pi_1$ via $\hat \rho$ and a (central) map which is identity on the $\wt M$ factor and translation on the $\mathbb{R}$ factor.   
However, the flows $\psi_k$ are not orbit-equivalent:  to see this, let $m$ denote the minimum number of closed orbits in any fixed free homotopy class for $\psi_1$.  Then $\psi_k$ has, by construction, a minimum number $mk$.  

In the same way, if $M$ admits a non-$\mathbb{R}$-covered Anosov flow, it follows by \cite{FenleyQGAnosov} that it is quasigeodesic and the discussion in \S~\ref{ss.CTmaps} applies. In addition the circle at infinity of $\Phi$ is a universal circle
for the stable foliation of $\Phi$, see \cite[\S 4.3]{BT}. Using \cite[Proposition 7.1]{BoyerHu} we know that the Euler class of $\rho$ is zero, and therefore the resulting circle bundle $N_\rho = M \times S^1$ and we can do the same argument as above. 

This concludes the proof of Corollary \ref{cor_infiniteness} and Theorem \ref{thm_counterexample}.



\begin{thebibliography}{2}

\bibitem[Ago]{Agol} I. Agol, The virtual haken conjecture, Doc. Math. {\bf 18} (2013), 1045--1087.

\bibitem[Asa]{Asaoka} M. Asaoka, On invariant volumes of codimension-one Anosov flows and the Verjovsky conjecture, Invent. math. {\bf 174}, (2008) 435--462. \emph{Erratum Invent. Math. {\bf 178} (2009), no. 2, 449.}

\bibitem[Bar]{Barbot} T. Barbot, G\'eom\'etrie transverse des flots d'Anosov, Ph.D. thesis, Ecole Normale Sup\'erieure de Lyon, 1992.

\bibitem[BBGH]{BBGH} T. Barthelme, C. Bonatti, A. Gogolev, F. Rodriguez Hertz,  Anomalous Anosov flows revisited,  Proc. Lond. Math. Soc. (3) {\bf 122} (2021), no. 1, 93--117.

\bibitem[BBM]{BBM} T. Barthelme, C. Bonatti, K. Mann, Non-transitive pseudo-Anosov flows, arXiv: 2411.03586.


\bibitem[BFM]{BFM} T. Barthelme, S. Frankel, K. Mann, Orbit equivalences of pseudo-Anosov flows, Invent. Math.  {\bf 240} (2025), no. 3, 1119--1192.

\bibitem[BM]{BM} T. Barthelme, K. Mann, Pseudo Anosov flows, a plane approach, arXiv:2509.15375 

\bibitem[BY]{BY} F. Beguin, B. Yu, Existence of arbitrary large numbers of non-$\R$-covered Anosov flows on hyperbolic  3-manifolds, arXiv:2402.06551


\bibitem[BHLM]{BHLM} I. Bhattacharyya, R. Halder, N. Lazarovich, M. Mj, Finiteness of Cannon--Thurston fibers, arXiv:2603.22428

\bibitem[BoI]{BI-flows} C. Bonatti, I. Iakovoglou, Anosov flows on 3-manifolds: the surgeries 
	and the foliations, Erg. Th. Dyn. Sys. {\bf 43} (2023) 1129-1188.

\bibitem[BoM]{BoM} J. Bowden, K. Mann, $C^0$ stability of boundary actions and inequivalent Anosov flows, Annales Scientifique de l'ENS. 4e serie, {\bf 55} (2022) 1003-1046. 

\bibitem[BoHu]{BoyerHu} S. Boyer, Y. Hu, Taut foliations in branched cyclic covers and left-orderable groups, Transactions of the AMS, {\bf 372} 11 (2019) 7921--7957. 


\bibitem[BT]{BT} E. Buckminster, S. Taylor, Universal circles for Anosov foliations, arXiv: 2512.10107.


\bibitem[Ca$_1$]{Calegari-Rcov} D. Calegari, The geometry of R-covered foliations, Geometry and Topology {\bf 4} (2000) 457-515.

\bibitem[Ca$_2$]{Calegari} D. Calegari, Promoting essential laminations, Invent. Math. {\bf 166} (2006), no. 3, 583--643.

\bibitem[CL]{CL} D. Calegari, I. Loukidou , CaTherine Wheels, arXiv:2604.24619


\bibitem[CT]{CT} J. Cannon; W. Thurston, Group invariant Peano curves,  Geometry and Topology. {\bf 11} (3) (2007) 1315--1356.

\bibitem[Caw]{Caw} E. Cawley, The Teichm\"{u}ller space of an Anosov diffeomorphism of $T^2$, Inventiones Math. {\bf 112} (1993) 351--376. 


\bibitem[Fen$_1$]{Fenley-Anosov} S. Fenley, Anosov flows in 3-manifolds, Ann. Math. {\bf 139} (1994) 79-115.

\bibitem[Fen$_2$]{Fenley-transversepAflow} S. Fenley, Foliations, topology and geometry of 3-manifolds: R-covered foliations and transverse pseudo-Anosov flows, Comm. Math. Helv. {\bf 77} (2002) 415-490.


\bibitem[Fen$_3$]{Fenley-extension} S. Fenley, Ideal boundaries of pseudo-Anosov flows and uniform convergence groups, with connections and applications to large scale geometry, Geom. Topol. {\bf 16} (2012), no. 1, 1--110.

\bibitem[Fen$_4$]{Fenley-ext2} S. Fenley, Quasigeodesic pseudo-Anosov flows in hyperbolic 3-manifolds and connections with large scale geometry,Adv. Math. {\bf 303} (2016), 192--278.

\bibitem[Fen$_5$]{FenleyQGAnosov} S. Fenley, Non R-covered Anosov flows in hyperbolic 3-manifolds are quasigeodesic, Geom. Funct. Anal. {\bf 36} (2026) no. 2, 412--508.

\bibitem[FiHa]{FisherHasselblatt} T. Fisher, B. Hasselblatt, Hyperbolic flows, Zurich Lectures in Advanced Mathematics, EMS (2019). 

\bibitem[Fr]{Franks} J. Franks, Anosov diffeomorphisms, Global Analysis (Proc. Sympos. Pure Math., vol.XIV,
Berkeley, CA,1968), American Mathematical Society, Providence,RI,1970, 61--93.

\bibitem[FL]{FrankelLandry} S. Frankel, M. Landry, From quasigeodesic to pseudo-Anosov flows, 
arXiv: 2510.02217.

\bibitem[Gh]{Ghys} E. Ghys, Codimension on Anosov flows and suspensions, Lecture Notes in Math., 1331
Springer-Verlag, Berlin, 1988, 59--72.
ISBN: 3-540-50016-2


\bibitem[LY]{LY} F. Ledrappier, L-S. Young, The Metric Entropy of Diffeomorphisms: Part II: Relations between Entropy, Exponents and Dimension, Annals of Mathematics {\bf 122} No. 3 (Nov., 1985), 540--574.

\bibitem[MR]{MR} C. Maclachlan, A. Reid, The arithmetic of hyperbolic 3-manifolds, Springer 2003.

\bibitem[Ne]{Newhouse} S. Newhouse, On codimension one Anosov diffeomorphisms, Amer. J. Math. {\bf 92}
(1970),761--770. 

\bibitem[PT]{PlanteThurston} J. Plante, W. Thurston, Anosov flows and the fundamental group, Topology,
Volume 11, Issue 2,1972, 147--150.

\bibitem[Pot]{Pot-Anosov} R. Potrie, Anosov flows in dimension 3: an outside look, J. Fixed Point Theory Appl. 27 (2025), no. 1, Paper No. 21. 

\bibitem[PS]{PujSam} E. Pujals, M. Sambarino, Density of hyperbolicity and tangencies in sectional dissipative regions, Annales de l'Institut Henri Poincar\'e. C, Analyse non lin\'eaire, Volume 26 (2009) no. 5, 1971--2000.

\bibitem[Qu]{Qu} J.-F. Quint, An overview of Patterson-Sullivan theory,	
	https://www.math.u-bordeaux.fr/~jquint/publications/courszurich.pdf.

\bibitem[Si]{Simic} S.Simic, Codimension one Anosov flows and a conjecture of Verjovsky, Ergodic Theory Dynam. Systems {\bf 17} (1997), no. 5, 1211--1231.

\bibitem[Sm]{Smale} S. Smale, Differentiable dynamical systems, Bull. Amer. Math. Soc. {\bf 73} (1967), 747--817.

\bibitem[Su]{Su} D. Sullivan, The density at infinity of a discrete group of hyperbolic motions,
	Publ. I.H.E.S. {\bf 50} (1979) 171-202.

\bibitem[Sum]{Sumi} N. Sumi, Topological Anosov maps of infra-nilmanifolds. Journal of the Mathematical Society of Japan, {\bf 48} (4) (1996) 607--648.

\bibitem[Tho]{Tholozan} N. Tholozan, ''Exotic maximal sufrace group representations into $\mathrm{Diff}(S^1)$'' Talk at IHP, available in CarminTV: https://www.carmin.tv/en/video/exotic-maximal-surface-group-representations-into-diff-s1

\bibitem[Tho$_2$]{Tholozan-inprep} N. Tholozan, \emph{In preparation}. 

\bibitem[Thu]{Thurston}  W. Thurston, Three manifolds, foliations and circles I arXiv:9712268

\bibitem[Ve$_1$]{Verjovsky} A. Verjovsky, Flows with cross sections, Proc. Nat. Acad. Sci. {\bf 66} 4 (1970) 1154--1556.

\bibitem[Ve$_2$]{Verjovsky1} A. Verjovsky, Codimension one Anosov flows, Bol. Soc. Matematica Mexicana {\bf 19} (1974), 49--77.

\bibitem[Ve$_3$]{Verjovsky2} A. Verjovsky, Sistemas de Anosov, Escuela Latinoamericana de Matematicas (XII-ELAM), Lima (1999)

\end{thebibliography}
\end{document}